\newcommand\N{{\mathbb N}}
\newcommand\R{{\mathbb R}}
\newcommand\T{{\mathbb T}}
\newcommand\C{{\mathbb C}}
\newcommand\Sp{{\mathbb S}}
\newcommand\E{{\mathbb E}}
\def\AA{{\mathcal A}}
\def\BB{{\mathcal B}}
\def\CC{{\mathcal C}}
\def\EE{{\mathcal E}}
\def\II{{\mathcal I}}
\def\LL{{\mathcal L}}
\def\OO{{\mathcal O}}
\def\PP{{\mathcal P}}
\def\RR{{\mathcal R}}
\def\UU{{\mathcal U}}
\def\ZZ{{\mathcal Z}}
\def\BBB{{\mathscr B}}
\def\CCC{{\mathscr C}}
\def\eps{{\varepsilon}}
\def\Nt{|\hskip-0.04cm|\hskip-0.04cm|}
\newtheorem{theo}{Theorem}
\newtheorem{prop}[theo]{Proposition}
\newtheorem{lem}[theo]{Lemma}
\newtheorem{cor}[theo]{Corollary}
\newtheorem{rem}[theo]{Remark}
\newtheorem{ass}[theo]{Assumptions}
\newtheorem{ass1}[theo]{Assumption}
\newcommand{\beqn}{\begin{equation}}
\newcommand{\eeqn}{\end{equation}}
\newcommand{\bear}{\begin{eqnarray}}
\newcommand{\eear}{\end{eqnarray}}
\newcommand{\bean}{\begin{eqnarray*}}
\newcommand{\eean}{\end{eqnarray*}}
\title[Boltzmann equation for granular media]{Boltzmann equation for granular media with thermal force in a weakly inhomogeneous setting}
\begin{document}

\author{{\sc Isabelle Tristani}}
\address{CEREMADE, Universit\'e Paris IX-Dauphine,
Place du Mar\'echal de Lattre de Tassigny, 75775 Paris Cedex 16, France. 
E-mail: {\tt tristani@ceremade.dauphine.fr }}

\date\today

\maketitle

\begin{abstract} 
In this paper, we consider the spatially inhomogeneous diffusively driven inelastic Boltzmann equation in different cases:  the restitution coefficient can be constant or can depend on the impact velocity (which is a more physically relevant case), including in particular the case of viscoelastic hard spheres. In the weak thermalization regime, i.e. when the diffusion parameter is sufficiently small, we prove existence of global solutions considering the close-to-equilibrium regime as well as the weakly inhomogeneous regime in the case of a constant restitution coefficient. It is the very first existence theorem of global solution in an inelastic ``collision regime'' (that is excluding \cite{AR} where an existence theorem is proven in a near to the vacuum regime). We also study the long-time behavior of these solutions and prove a convergence to equilibrium with an exponential rate. The basis of the proof is the study of the linearized equation. We obtain a new result  on it, we prove existence of a spectral gap in weighted (stretched exponential and polynomial) Sobolev spaces and a result of exponential stability for the semigroup generated by the linearized operator. To do that, we develop a perturbative argument around the spatially inhomogeneous equation for elastic hard spheres and we take advantage of the recent paper \cite{GMM}
where this equation has been considered. We then link the linearized theory with the nonlinear one in order to handle the full non-linear problem thanks to new bilinear estimates on the collision operator that we establish. As far as the case of a constant coefficient is concerned, the present paper largely improves similar results obtained in \cite{MM2} in a spatially homogeneous framework. Concerning the case of a non-constant coefficient, this kind of results is new and we use results on steady states of the linearized equation from \cite{AL3}.
\end{abstract}

\vspace{1cm}
\textbf{Mathematics Subject Classification (2010)}: 76P05 Rarefied gas flows, Boltzmann equation [See also 82B40, 82C40, 82D05]; 76T25 Granular flows [See also 74C99, 74E20]; 47H20 Semigroups of nonlinear operators [See also 37L05, 47J35, 54H15, 58D07].

\vspace{0.3cm}
\textbf{Keywords}: Inelastic Boltzmann equation; granular media; viscoelastic hard spheres; small diffusion parameter; elastic limit; perturbation; spectral gap; exponential rate of convergence; long-time asymptotic.

\vspace{0.5cm}
\tableofcontents


\section{Introduction} 
\label{sec:intro}
\setcounter{equation}{0}
\setcounter{theo}{0}


\subsection{The model} \label{subsec:model}
We investigate in the present paper the Cauchy theory associated to the spatially inhomogeneous diffusively driven inelastic Boltzmann equation for hard spheres interactions and constant or non-constant restitution coefficient. More precisely, we consider hard spheres particles described by their distribution density $f = f(t,x,v) \geq 0$ undergoing inelastic collisions in the torus in dimension $d=3$. The spatial coordinates are $x \in \T^3$ (3-dimensional flat torus) and the velocities are $v \in \R^3$. In the model at stake, the inelasticity is characterized by the so-called normal restitution coefficient $e_\lambda(\cdot):=  e(\lambda�\, \cdot)$ which can be, as opposed to most of previous contributions on the subject, constant or non-constant. The distribution $f$ satisfies the following equation:
\beqn \label{eq:Bol1}
\partial_t f =  Q_{e_\lambda} (f,f) + \lambda^\beta \, \Delta_v f - v \cdot  \nabla_x f \quad \text{with} \quad
\beta = \left\{
\begin{aligned}
&1 \quad \text{if} \quad e(\cdot) = 1 - \lambda, \\
&\gamma>0 \quad \text{if} \quad e(r) \underset{0}{\sim} 1-a \, r^\gamma, \, \, a>0.
\end{aligned}
\right.
\eeqn

The term $\lambda^\beta \, \Delta_v f $ represents a constant heat bath which models particles uncorrelated random accelerations between collisions. The quadratic collision operator $Q_{e_\lambda}$ models the interactions of hard spheres by inelastic binary collisions. It is important to emphasize the fact that contrary to the case of elastic collisions (which correspond to $\lambda =0$ in \eqref{eq:Bol1}) for which both momentum and energy are preserved during collisions, in the inelastic case, momentum is preserved but there is a loss of energy during collisions. The restitution coefficient quantifies the loss of relative normal velocity of a pair of colliding particles after the collision with respect to the impact velocity. More precisely, if $v$~and~$v_*$ (resp. $v'$ and $v'_*$) denote the velocities of a pair of particles before (resp. after) collision, we have the following equalities
\beqn \label{eq:coll}
\left\{
\begin{aligned}
&v+v_* = v'+ v'_*, \\
&(u' \cdot \widehat{n}) = -(u \cdot \widehat{n}) \, e_\lambda (u \cdot \widehat{n}),
\end{aligned}
\right.
\eeqn
where 
$$
u=v-v_*, \quad u' = v'- v'_*,
$$
denote respectively the relative velocity before and after collision, $e:=e(|u \cdot \widehat{n}|)$ is such that $0 \leq e \leq 1$ and the direction of the unitary vector $\widehat{n} \in \Sp^2$ is accordingly to the impact direction, meaning that $\widehat{n}$ stands for the unit vector that points from the $v$-particle center to the $v_*$-particle center at the instant of impact. Assuming the granular particles to be perfectly smooth hard spheres of mass $m = 1$, the velocities after collision $v'$ and $v'_*$ are then given, in virtue of~(\ref{eq:coll}), by
\beqn \label{eq:velo}
v' = v - \frac{1+e_\lambda}{2} \, (u \cdot \widehat{n}) \,  \widehat{n}, \quad v'_* = v_* + \frac{1+e_\lambda}{2} \, (u \cdot \widehat{n})\,  \widehat{n}.
\eeqn

There exist others possible parametrizations of post-collisional velocities. We here introduce another parametrization that shall be more convenient in the sequel. If $v$ and $v_*$ are the velocities of two particles with $v \neq v_*$, we set $\widehat{u} = u / |u|$. We then perform in~(\ref{eq:velo}) the change of unknown $ \sigma = \widehat{u} - 2 \, (\widehat{u} \cdot \widehat{n}) \, \widehat{n} \in \Sp^2$, it gives us an alternative parametrization of the unit sphere $\Sp^2$. The impact velocity then writes $|u \cdot \widehat{n} | = |u| \sqrt{ \frac{ 1 - \widehat{u} \cdot \sigma}{2}}$ and the post-collisional velocities $v'$ and $v'_*$ are then given by
\beqn \label{eq:velo2}
v' = v - \frac{ 1+ e_\lambda}{2} \,  \frac{u - |u|�\sigma}{2}, \quad v'_* = v_* + \frac{ 1+ e_\lambda}{2} \,  \frac{u - |u|�\sigma}{2}.
\eeqn
The loss of energy during collisions then comes down to the following inequality:
\beqn \label{eq:lossenergy} 
|v'|^2 + |v'_*|^2 - |v|^2 - |v_*|^2 = - |u|^2 \, \frac{1-\widehat{u} \cdot \sigma}{4} \, \left( 1 - e_\lambda\left( |u| \, \sqrt{(1- \widehat{u} \cdot \sigma)/2} \right)^2 \right)<0.
\eeqn

The representation of post-collisional velocities through \eqref{eq:velo2} allows us to give a definition of the Boltzmann collision operator in weak form by
\beqn \label{eq:collop}
\begin{aligned}
& \int_{\R^3} Q_{e_\lambda} (g,f) \, \psi \, dv = \int_{\R^3 \times \R^3 \times \Sp^2} g(v_*) f(v) \left[\psi(v') \, - \, \psi(v) \right] |v-v_*| \, d\sigma \, dv_* \, dv \\
&\qquad ={1 \over 2} \int_{\R^3 \times \R^3 \times \Sp^2} g(v_*) f(v) \left[\psi(v'_*) + \psi(v')  -  \psi(v_*) - \psi(v) \right] |v-v_*| \, d\sigma \, dv_* \, dv,
\end{aligned}
\eeqn
for any $\psi=\psi(v)$ a suitable regular test function. 
As can be easily seen in the second weak formulation in \eqref{eq:collop}, the operator $Q_{e_\lambda}$ preserves mass and momentum, and since the Laplacian also does so, the equation preserves mass and momentum. However, energy is not preserved either by the collisional operator (which tends to cool down the gas because of~(\ref{eq:lossenergy})) or by the diffusive operator (which warms it up).

\medskip

\subsection{Function spaces}
For some given Borel weight function $m>0$ on $\R^3$, let us define $L^q_v L^p_x(m)$, $1 \le p,q \le +\infty$, as the Lebesgue space associated to the norm 
$$
\| h \|_{L^q_v L^p_x(m)} = \| \| h(\cdot, v)\|_{L^p_x} \, m(v) \|_{L^q_v}.
$$
We also consider the standard higher-order Sobolev generalizations $W^{\sigma, q}_v W^{s,p}_x(m)$ for \linebreak $\sigma, s \in \N$ defined by the norm 
$$
\| h \|_{W^{\sigma, q}_v W^{s,p}_x(m)} = \sum_{0\leq s' \leq s, \, 0 \leq \sigma' \leq \sigma, \, s'+ \sigma' \leq \max(s,\sigma)} 
\| \| \nabla_x^{s'} \nabla_v^{\sigma'} h(\cdot, v)\|_{L^p_x} \, m(v) \|_{L^q_v}.
$$
This definition reduces to the usual weighted Sobolev space $W^{s,p}_{x,v}(m)$ when $q=p$ and $\sigma=s$, and we recall the shorthand notation $H^s = W^{s,2}$.

\medskip
\subsection{Main and known results}
Before stating our main result, let us recall some facts on equilibriums of our equation. We know that there exists $G_\lambda=G_\lambda(v)$ a space homogeneous solution of the stationary equation
$$
Q_{e_\lambda}(f,f) +\lambda^\gamma \Delta_v f = 0 
$$
with mass $1$ and vanishing momentum. Moreover, $G_\lambda$ is unique for $\lambda$ close enough to $0$ and satisfies the following estimates at infinity for some $A$, $M>0$:
\beqn \label{eq:Glambda}
\int_{\R^3} G_\lambda (v) \,  e^{A \, |v|^{3/2}} \, dv \leq M, \quad \int_{\R^3} G_\lambda (v) \,  e^{|v|^2/2} \, dv = \infty.
\eeqn
We precise here that a large literature has been devoted to the study (existence, uniqueness and growth properties) of self-similar profiles (resp. stationary solutions) for freely cooling (resp. driven by a thermal bath) inelastic hard spheres with a constant restitution coefficient (see the papers of Bobylev et al. \cite{BGP}, Gambda et al. \cite{GPV}, Mischler and Mouhot \cite{MM,MM2,MM3}) and with a non-constant one (see the paper of Alonso and Lods~\cite{AL3}). 

\subsubsection{Cauchy theory}
Our main result is the proof of existence of solutions for the non-linear problem~(\ref{eq:Bol1}) as well as stability and relaxation to equilibrium for these solutions. In both constant and non-constant cases, we are able to prove an existence theorem in a close-to-equilibrium regime; the main result in this regime reads as follows (see Theorem~\ref{theo:existence} for a precise statement):

\begin{theo}
Consider $\EE = W^{s,1}_x L^1_v \left(e^{b\langle v \rangle^\beta}\right)$ and $\EE^1 = W^{s,1}_x L^1_v \left(\langle v \rangle e^{b\langle v \rangle^\beta}\right)$ where $b>0$, $\beta \in (0,1)$ and $s>6$. For a convenient non-constant restitution coefficient $e$ or for a constant one, for $\lambda$ small enough, and for an initial datum $f_{in} \in \EE$ close enough to the equilibrium $G_\lambda$, there exists a unique global solution $f \in L^\infty_t(\EE) \cap L^1_t(\EE^1)$ to (\ref{eq:Bol1}) which furthermore satisfies
$$
\forall \, t \geq 0, \quad \|f_t - G_\lambda\|_{\EE} \leq C \, e^{-\widetilde{\alpha} \, t} \, \|f_{in} - G_\lambda\|_{\EE}
$$
for some constructive constants $C$ and $\widetilde{\alpha} >0$. 
\end{theo}
\noindent Moreover, in the case of a constant restitution coefficient,
we are able to get a similar result in a weakly inhomogeneous regime. Let us explain why we do not obtain such a result for a non-constant restitution coefficient. It is due to the fact that no result on the long-time behaviour of solutions to the spatially homogeneous problem is available for general initial data (far from equilibrium) contrary to the case of constant restitution coefficient. The main result concerning the weakly inhomogeneous regime reads as follows (see Theorem~\ref{theo:weak2} for a precise statement):

\begin{theo}
Consider $\EE = W^{s,1}_x L^1_v \left(e^{b\langle v \rangle^\beta}\right)$ and $\EE^1 = W^{s,1}_x L^1_v \left(\langle v \rangle e^{b\langle v \rangle^\beta}\right)$ where $b>0$, $\beta \in (0,1)$ and $s>6$. For a constant restitution coefficient $e$, for $\lambda$ small enough, and for an initial datum $f_{in} \in \EE$ close enough to a suitably regular spatially homogeneous distribution $g_{in}=g_{in}(v)$, there exists a unique global solution $f \in L^\infty_t(\EE) \cap L^1_t(\EE^1)$ to (\ref{eq:Bol1}) which furthermore satisfies
$$
\forall \, t \geq 0, \quad \|f_t - G_\lambda\|_{\EE} \leq C \, e^{-\widetilde{\alpha} \, t} \, \|f_{in} - G_\lambda\|_{\EE}
$$
for some constructive constants $C$ and $\widetilde{\alpha} >0$. 
\end{theo}

The innovative aspect of this work lies both in the obtained result and in the method of proof. 

On the one hand, concerning the result itself, we have to underline that it is the first time that an existence result is obtained in the spatially inhomogeneous case in an inelastic ``collision regime'',  moreover, in both cases of constant and non-constant coefficient of inelasticity. 
Let us mention that most of the previous results have been established in an homogeneous framework (see the paper of Gambda et al. \cite{GPV} and the one of Mischler et al. \cite{MMRR} for homogeneous Cauchy theories). For the inhomogeneous inelastic Boltzmann equation, the literature is more scarce; in this respect we mention the work of Alonso~\cite{AR} that treats the Cauchy problem in the case of near-vacuum data and thus in a case where collisions do not play a significant role. It is valuable mentioning that the scarcity of results regarding existence of solutions for the inhomogeneous inelastic case compared to the inhomogeneous elastic case is explained by two facts. In the inhomogeneous elastic case, the Cauchy problem has been handled through two different frameworks that we briefly explain in what follows. First, the theory of perturbative solutions which is based on the spectral study of the linearized associated operator that goes back to Hilbert \cite{H1,H2} Carleman \cite{Carl}, Grad \cite{Grad1,Grad2} and Ukai \cite{Uk} who built the first global solutions of Boltzmann equation. Up to now, it was not possible to look forward to such a strategy in the inelastic case because of the absence of precise spectral study of the linearized problem. The second well-known theory in the elastic case is the one of DiPerna-Lions renormalized solutions \cite{DPL} which is no longer available in the inelastic case due to the lack of entropy estimates for the inelastic Boltzmann equation. 

On the other hand, regarding the method of proof, we first develop a Cauchy theory of perturbative solutions before going into the weakly inhomogeneous framework. As mentioned previously, the core of this theory is the study of the linearized associated problem. Consequently, we make a precise analysis of the linearized equation from a viewpoint of spectral study and with a semigroup approach as explained in the following paragraph. 

\subsubsection{The linearized equation}
We linearize our equation around the equilibrium $G_\lambda$ with the perturbation $f=G_\lambda+h$.
We obtain at first order the linearized equation around the equilibrium $G_\lambda$:
\beqn \label{eq:Bol2}
\partial_t h = \LL_\lambda h := Q_{e_\lambda}(h,G_\lambda) + Q_{e_\lambda}(G_\lambda,h) + \lambda^\gamma \Delta_v h - v \cdot \nabla_x h.
\eeqn
 The lack of spectral results on the linearized inelastic operator $\LL_\lambda$ is explained by the fact that equilibriums of the equation $G_\lambda$ are not anymore explicit and do not decrease as Maxwellians as in the elastic case (see~\eqref{eq:Glambda}). In this elastic case, the spectral study of the linearized associated operator strongly relied, until recently, on symmetry arguments and on the Hilbert structure of $L^2(\mu^{-1/2})$ where $\mu := G_0 = (2 \pi)^{-3/2} e^{-|v|^2/2}$ is the elastic equilibrium of mass $1$, vanishing momentum and energy $3$. In our inelastic case, we lose this convenient structure. Our results are established in a small inelasticity regime (close to the elastic one), our strategy is thus to use a perturbative argument around the elastic case. What is important to highlight here is that, contrary to the elastic case where the equilibriums of the equation are Maxwellians, equilibriums in the inelastic case do not decrease enough to belong to weighted Hilbert spaces of type $L^2(e^{|v|^2/2})=L^2(\mu^{-1/2})$ (see \eqref{eq:Glambda}), the natural space in which the study of the linearized elastic equation is done. It is thus not possible to develop a perturbative theory around the elastic case in this kind of spaces. We thus have to use the recent work \cite{GMM} in which the study of the linearized elastic Boltzmann operator has been developed in larger Banach spaces, it gives explicit spectral gap estimates on the semigroup associated to the linearized non homogeneous operator $\LL_0$ in various Sobolev spaces $W^{s,p}_x W^{\sigma, q}_v (m)$ with polynomial or stretched exponential weights~$m$.
In this kind of spaces which can contain the inelastic equilibriums $G_\lambda$, one could consider to develop a perturbative argument around the elastic case. 
Here is a rough version of the main result that we obtain on the linearized operator (see Theorem \ref{theo:linearmainresult} for a complete version):
\begin{theo}
For $\lambda$ close enough to $0$, the spectrum $\Sigma (\LL_\lambda)$ of $\LL_\lambda$ satisfies the following separation property in $\EE = W^{s,1}_x L^1_v \left( e^{b\langle v \rangle^\beta}\right)$ ($b>0$, $\beta \in (0,1)$ and $s \in \N$):
$$
\Sigma(\LL_\lambda) \cap \left\{ z \in \C, \, \Re e \, z > -\alpha_1 \right\}= \{\mu_\lambda,0 \},
$$
where $\alpha_1$ is the elastic spectral gap. Moreover, $0$ is a four-dimensional eigenvalue (due to the conservation of mass and momentum) and $\mu_\lambda \in \R$, the ``energy" eigenvalue, is a one-dimensional eigenvalue that satisfies $\mu_\lambda<0$ and $\mu_\lambda \xrightarrow[\lambda \to 0]{} 0$.

We have the following estimate on $S_{\LL_\lambda}(t)$, the semigroup generated by $\LL_\lambda$:
\beqn \label{eq:SGdecay}
\forall \, t \geq 0, \quad \|S_{\LL_\lambda}(t)(I -  \Pi_{\LL_\lambda,0} -\Pi_{\LL_\lambda,\mu_\lambda}) \|_{\BBB(\EE)} \leq C e^{-\widetilde{\alpha} t}
\eeqn
for some constructive constants $\widetilde{\alpha}>0$ and $C \ge 1$, with $\Pi_{\LL_\lambda,0}$ (resp. $\Pi_{\LL_\lambda,\mu_\lambda}$) the projection onto the null space of $\LL_\lambda$ (resp. the eigenspace associated to $\mu_\lambda$). 
\end{theo}
This theorem is proven thanks to a perturbative argument around the elastic case in the same line as the one developed by Mischler and Mouhot in \cite{MM,MM2} but we largely improve it in three aspects: we are able to deal with the spatial dependency in the torus; we are able to deal with non-constant restitution coefficients and we are able to obtain a decay estimate on the semigroup \eqref{eq:SGdecay} using the localization of the spectrum.
The third point is crucial in our strategy and we will explain why it is so important in the following paragraph.

\medskip

\subsection{Method of proof}
We now go into more details regarding the strategy of the proof. As explained previously, to develop a Cauchy theory for the equation \eqref{eq:Bol1}, we first study the linearized problem around the equilibrium and establish its asymptotic stability by a perturbation argument which uses the spectral analysis of the linearized elastic Boltzmann equation. 

This perturbative argument is based on several facts that we precise here. The first one comes from \cite{GMM}: the spectrum of the linearized elastic equation is well localized, meaning that it admits a spectral gap in a large class of Sobolev spaces. The second one is that for $\lambda$ small enough,
$$
\LL_\lambda - \LL_0 = \OO(\lambda)
$$
for a suitable operator norm, this kind of estimates relies on accurate estimates on the difference $Q_{e_\lambda} - Q_1$ that we establish. The third one is that the semigroup $S_{\LL_\lambda}$ generated by $\LL_\lambda$ splits as 
$$
S_{\LL_\lambda} = S^1_\lambda + S^2_\lambda, \quad S^1_{\lambda} \simeq e^{t T_\lambda}, \quad T_\lambda \, \,\text{finite dimensional}, \quad S^2_\lambda = \OO(e^{at}), \, \, a<0
$$
still in suitable Sobolev spaces that contain the equilibriums $G_\lambda$. This decomposition of the semigroup is obtained by introducing a suitable splitting of the operator $\LL_\lambda$
$$
\LL_\lambda = \AA_\lambda + \BB_\lambda
$$
where $\BB_\lambda$ enjoys some dissipativity properties and $\AA_\lambda$ some regularity properties. These operators are defined through an appropriate mollification-truncation process, described later on. Such a splitting has to be exhibited in general Sobolev spaces and is one of the main technical issues of this work. Those two facts combined with the well-localization of the spectrum of the elastic linearized operator $\LL_0$ in those Sobolev spaces allow us to deduce some properties on the spectrum of $\LL_\lambda$. In short, the inelastic model is seen as a perturbation of the elastic one and our analyze takes advantage of such a property in order to capture the asymptotic behavior of the related spectral objects (spectrum, spectral projector...). This perturbative argument is also technically challenging and is of broader concern than just the inelastic Boltzmann equation, it can be useful for others type of equations (Fokker-Planck, Boltzmann and Landau...).

From the spectral properties that we are able to get thanks to this perturbation argument, we are then capable to obtain an estimate on the semigroup thanks to a (principal) spectral mapping theorem. This part is fundamental to go back to the non linear problem. Contrary to the homogeneous case in \cite{MM2} where the existence of solutions was already known from \cite{GPV}, in our case, we need to build a solution. To do that, we use an iterative scheme whose convergence is ensured thanks to a priori estimates coming from, among others, estimates on the semigroup of the linearized operator. Another key element for getting those a priori estimates is the proof of new estimates on the bilinear collision operator that we establish in the general inhomogeneous setting. The idea is to prove that for a sufficiently close to the equilibrium initial datum (in the ``linearization trap''), the nonlinear part of the equation is negligible with respect to the linear part which, consequently, dictates the dynamic. We can thus use the result that we obtained on the linearized semigroup and recover an exponential decay to equilibrium for the nonlinear problem.

\medskip

\subsection{Physical and mathematical motivations} \label{subsec:phymath}
We won't enter into details concerning the physical introduction to granular gases, we refer to the works of Brillantov and P{\"o}schel~\cite{BP} and Cercignani~\cite{Cer}. To put it briefly, granular flows have become of major interest in physical research for several decades. This analysis is based on kinetic theory for some regimes of dilute and rapid flows. However, the mathematical study started later, in the end of the 1990 decade. Once again, we do not give an exhaustive list of references for the mathematical introduction to this theory, we only refer  to the papers of Mischler et al.~\cite{MMRR,MM3}. As explained in the latter, granular gases are made up of grains of macroscopic size whith contact collisional interactions, assuming that there are no self-interaction mechanisms such as gravitation, electromagnetism... As a consequence, it is natural to suppose that the binary interaction between grains is that of inelastic hard spheres with no loss of ``tangential relative velocity'' (according to the impact direction) and a loss in ``normal relative velocity''. This loss is quantified in some (normal) restitution coefficient $e$ introduced in Subsection \ref{subsec:model} which is either assumed to be constant as a first approximation or can be more intricate: for instance it is a function of the modulus $|v'-v|$ of the normal relative velocity in the case of ``visco-elastic hard spheres'' (see~\cite{AL},~\cite{AL2},~\cite{AL3} and~\cite{BP}). In this paper, we consider those two cases. More specifically, in the non-constant case, the main assumption on $e(\cdot)$ we shall need is listed in the following.

\begin{ass} \label{ass} 
\
\begin{enumerate} 
\item \label{ass:1} The mapping $r \rightarrow e(r)$ from $\R^+$ to $(0,1]$ is absolutely continuous and non-increasing.
\item \label{ass:2} The mapping $r \rightarrow r \, e(r)$ is strictly increasing on $\R^+$.
\item \label{ass:3} There exist $a,b>0$ and $\overline{\gamma} > \gamma > 0$ such that 
$$
\forall \, r \geq 0, \quad |e(r) - 1 + a \, r^\gamma | \leq b \, r^{\overline{\gamma}} .
$$ 
\end{enumerate}
\end{ass}
\noindent Let us now describe the most physically relevant model we have in mind and that fulfills Assumptions \ref{ass}, the one corresponding to viscoelastic hard spheres for which the restitution coefficient has been derived by Schwager and P{\"o}schel in~\cite{SP}. For this model, $e(\cdot)$ can be represented using an infinite expansion series as follows:
\beqn \label{eq:visco}
e ( |u \cdot \widehat{n}|) = 1 + \sum_{k=1}^\infty (-1)^k \, a_k |u \cdot \widehat{n} |^{k/5}, \quad u \in \R^3, \quad \widehat{n} \in \Sp^2
\eeqn
where $a_k > 0$ for any $k \in \N$ are parameters depending on the material viscosity. One can show that in this case, $e(\cdot)$ satisfies Assumptions~\ref{ass} with $\gamma = 1/5$ and $\overline{\gamma}=2/5$ for~\eqref{ass:3}. In the case of a non-constant restitution coefficient, this is the principal example of applications of the results in the paper, though, the range of application of our results is more general. 

\smallskip
We now explain our motivations to restrict our study to the case of a small diffusion parameter (weak thermalization regime), corresponding to small inelasticity which come both from mathematics and physics. 
First, the assumption of small inelasticity is adequate from the viewpoint of the regime of validity of kinetic theory. Indeed, the validity of Boltzmann's theory heavily relies on the molecular chaos assumption, and as explained in~\cite{BP} for instance, the more inelasticity, the more correlations between grains are created during the binary collisions. Secondly, the case of small inelasticity is interesting since it allows to use expansions around the elastic case and it also gives rise to another issue of interest, the link between the inelastic case (dissipative at the microscopic level) and the elastic one (``Hamiltonian'' at the microscopic level). This also explains (cf. \cite{BP}) why this case has been considered so largely. Finally, this case of a small inelasticity is justifiable regarding applications, since it applies to interstellar dust clouds in astrophysics, or sands and dusts in earth-bound experiments, and more generally to visco-elastic hard spheres whose restitution coefficient is not constant but close to $1$ on the average, as explained previously.

%
  \smallskip
Finally, we clarify why studying the rescaled equation \eqref{eq:Bol1} is relevant in the case of weak thermalization regime. The associated stationary equation before rescaling is given by 
\beqn \label{eq:nonrescaled}
Q_e(f,f) + \mu \, \Delta_v f - v \cdot \nabla_x f =0
\eeqn
for some positive thermalization coefficient $\mu >0$. We then introduce the rescaled distribution $g_\lambda(x,v) := \lambda^3 \, f(x,\lambda v)$ if $f$ is a solution of (\ref{eq:nonrescaled}) of mass $\rho$. Using the following equalities which hold for any $x \in \T^3$ and $v \in \R^3$,
$$
\begin{aligned}
\lambda^2 \, Q_e(f,f) (x,\lambda v) &= Q_{e_\lambda}(g_\lambda, g_\lambda) (x,v), \\ 
\lambda^5 \, (\Delta_v f)(x,\lambda v) &= \Delta_v g_\lambda (x,v), \\
\lambda^3 \, v \cdot \nabla_x f (x, \lambda v) &= v \cdot \nabla_x g_\lambda(x,v),
\end{aligned}
$$
we obtain that $g_\lambda$ satisfies
\beqn \label{eq:rescaled}
Q_{e_\lambda} (g_\lambda, g_\lambda) + \frac{\mu}{\lambda^3} \, \Delta_v g_\lambda- v \cdot \nabla_x g_\lambda = 0.
\eeqn
Let us notice that this scaling preserves mass and momentum and moreover, $e_\lambda(r)$ tends to $1$ as $\lambda$ goes to $0$, the elastic restitution coefficient. We expect that formally, as $\lambda$ goes to $0$, 
$$
Q_{e_\lambda}(f,f) \simeq Q_1(f,f)
$$
and thus that, as $\lambda$ goes to $0$, the dissipation of energy vanishes. 
We see that if $\mu>0$ is fixed, then the second term of (\ref{eq:rescaled}) becomes infinite in the limit $\lambda \rightarrow 0$. We thus have to choose $\mu:=\mu_\lambda$ such that $\mu_\lambda \, \lambda^{-3}$ tends to $0$ as $\lambda$ goes to $0$. Such as in \cite{AL3}, we can compute a parameter $\mu_\lambda$ such that the energy 
$$
\EE_\lambda := \frac{1}{\rho} \, \int_{\T^3 \times \R^3} g_\lambda (x,v) \, |v|^2 \, dx \, dv
$$
is kept of order one in the limit $\lambda \rightarrow 0$, which gives $\mu = \mu_\lambda = \lambda^{3+\gamma}$. Equation (\ref{eq:rescaled}) hence becomes
$$
Q_{e_\lambda} (g_\lambda, g_\lambda) +\lambda^{\gamma} \, \Delta_v g_\lambda- v \cdot \nabla_x g_\lambda = 0.
$$
This explains why we study the evolution equation (\ref{eq:Bol1}) in the case of a non-constant restitution coefficient. In the constant case, we refer to \cite{MM2} for such an explanation.

\medskip

\subsection{Outline of the paper}
Section~\ref{sec:linearized-operator} is devoted to the study of the linearized problem.
In Section~\ref{sec:nonlinear-equation}, we go back to the nonlinear equation and we prove our main theorems concerning the Cauchy theory of our equation.
\medskip

\noindent\textbf{Acknowledgments.} We thank St\'{e}phane Mischler for fruitful discussions and his numerous comments and suggestions. 

\bigskip


\section{Properties of the linearized operator} 
\label{sec:linearized-operator}
\setcounter{equation}{0}
\setcounter{theo}{0}


\subsection{Notations and definitions}
For a given real number $a \in \R$, we define the half complex plane
$$
\Delta_a := \left\{ z \in \C, \, \Re e \, z > a \right\}.
$$

\smallskip
For some given Banach spaces $(E,\|\cdot \|_E)$ and $(\EE,\| \cdot
\|_\EE)$, we denote by $\mathscr{B}(E, \EE)$ the space of bounded linear
operators from $E$ to $\EE$ and we denote by $\| \cdot
\|_{\mathscr{B}(E,\EE)}$ or $\| \cdot \|_{E \to \EE}$ the associated operator norm. We write $\mathscr{B}(E) = \mathscr{B}(E,E)$ when $E=\EE$.
We denote by $\mathscr{C}(E,\EE)$ the space of closed unbounded linear
operators from $E$ to $\EE$ with dense domain, and $\mathscr{C}(E)=
\mathscr{C}(E,E)$ in the case $E=\EE$.

\smallskip
For a Banach space $X$ and   $\Lambda \in \mathscr{C}(X)$ we denote by $S_\Lambda(t)$, $t \ge
0$, its associated semigroup when it exists, by $\textrm{D}(\Lambda)$ its domain, by
$\textrm{N}(\Lambda)$ its null space and by $\mbox{R}(\Lambda)$ its range. 
We introduce the $\textrm{D}(\Lambda)$-norm defined as $\|f\|_{\textrm{D}(\Lambda)} = \|f\|_X + \|\Lambda f \|_X$ for $f \in \textrm{D}(\Lambda)$. More generally, for $k \in \N$, we define 
$$
\|f\|_{\textrm{D}(\Lambda^k)} = \sum_{j=0}^k \|\Lambda^j f\|_X, \quad  f \in \textrm{D}(\Lambda^k).
$$

We also denote by $\Sigma(\Lambda)$ its spectrum, so that for any $z$ belonging to the resolvent set $\rho(\Lambda) :=  \C
\backslash \Sigma(\Lambda)$, the operator $\Lambda - z$ is invertible and the resolvent operator
$$
\RR_\Lambda(z) := (\Lambda -z)^{-1}
$$
is well-defined, belongs to $\mathscr{B}(X)$ and has range equal to
$\textrm{D}(\Lambda)$.
We recall that $\xi \in \Sigma(\Lambda)$ is said to be an eigenvalue
if $N(\Lambda - \xi) \neq \{ 0 \}$. Moreover an eigenvalue $\xi \in
\Sigma(\Lambda)$ is said to be isolated if there exists $r>0$ such that
\[
\Sigma(\Lambda) \cap \left\{ z \in \C, \,\, |z - \xi| \le r \right\} =
\{ \xi \} .
\]
In the case when $\xi$ is an isolated eigenvalue we may define
$\Pi_{\Lambda,\xi} \in \mathscr{B}(X)$ the associated spectral projector by
$$
\Pi_{\Lambda,\xi} := - {1 \over
  2i\pi} \int_{ |z - \xi| = r' } (\Lambda-z)^{-1} \, dz
$$
with $0<r'<r$. Note that this definition is independent of the value
of $r'$ as the application
$
\C \setminus \Sigma(\Lambda) \to \mathscr{B}(X)$, $z \to \RR_{\Lambda}(z)$ is holomorphic.
For any $\xi \in \Sigma(\Lambda)$ isolated, it is well-known (see~\cite[Paragraph III-6.19]{Kato}) 
that $\Pi_{\Lambda,\xi}^2=\Pi_{\Lambda,\xi}$,  so that $\Pi_{\Lambda,\xi}$ is indeed a projector,
and that the ``associated projected semigroup"
\[
S_{\Lambda,\xi}(t) := -\frac{1}{2i \pi} \int_{|z-\xi|=r'} e^{zt}
\RR_\Lambda (z) \, dz , \quad t >0,
\]
satisfies
$$
\forall \, t>0, \quad S_{\Lambda,\xi}(t)
= \Pi_{\Lambda,\xi}  S_{\Lambda}(t) =
S_\Lambda(t)  \Pi_{\Lambda,\xi}.
$$
When moreover the so-called ``algebraic eigenspace" $\mbox{R}(\Pi_{\Lambda,\xi})$ is finite dimensional we say that
$\xi$ is a discrete eigenvalue, written as $\xi \in \Sigma_d(\Lambda)$. 



\smallskip
We shall need the following definition on
the convolution of semigroup (corresponding to composition at
the level of the resolvent operators). If one considers some Banach spaces $X_1$, $X_2$, $X_3$, for two given
  functions
  \[
  S_1 \in L^1(\R_+; \BBB(X_1,X_2)) \ \mbox{ and } \ S_2 \in L^1(\R_+;
  \BBB(X_2,X_3)),
  \]
the convolution $S_2 \ast S_1 \in L^1(\R_+; \BBB(X_1,X_3))$ is defined as
  $$
  \forall \, t \ge 0, \quad (S_2 * S_1)(t) := \int_0^t S_2(s) \, S_1 (t-s) \, ds.
  $$
%

\smallskip
Let us now introduce the notion of hypodissipative operators. If one consider a Banach space $(X, \| \cdot \|_X)$ and some operator $\Lambda \in \CCC(X)$, $(\Lambda - a)$ is said to be hypodissipative on~$X$ 
if there exists some norm $\Nt \cdot \Nt_X$ on $X$ equivalent to the initial norm $\| \cdot \|_X$ such that
$$
\forall \, f \in \textrm{D}(\Lambda), \quad  \exists \,  \phi \in F(f) \quad \text{s.t.} \quad \Re e \langle \phi , (\Lambda - a) f \rangle \leq 0, 
$$
where $\langle \cdot , \cdot \rangle$ is the duality bracket for the duality in $X$ and $X^*$ and $F(f) \subset X^*$
is the dual set of $f$ defined by
$$
F(f) = F_{\Nt \cdot \Nt_X} (f) := \left\{ \phi \in X^*, \, \langle \phi, f \rangle = \Nt f \Nt_X^2 =\Nt \phi \Nt_{X^*}^2 \right\}.
$$

\medskip

\subsection{Preliminaries on the steady states} \label{subsection:steadystates}
Let us recall results about the stationary equation 
\beqn \label{eq:stat1}
Q_{e_\lambda}(f,f) + \lambda^\gamma \, \Delta_v f = 0.
\eeqn
The main references for this subsection are~\cite{MM2} for the constant case and~\cite{AL3} for the non-constant case. We introduce the following notation: we shall say that a restitution coefficient $e(\cdot)$ satisfying Assumptions~\ref{ass} is belonging to the class $\E_m$ for some integer $m \geq1$ if $e(\cdot) \in \CC^m(0, \infty)$ and 
$$
\forall \, k=1,\dots, m, \quad \sup_{r \geq 0} r e^{(k)}(r) < \infty,
$$
where $e^{(k)}(\cdot)$ denotes the $k$-th order derivative of $e(\cdot)$.

\begin{rem}
For the physically relevant case of visco-elastic hard spheres, the restitution coefficient $e(\cdot)$ is given by~(\ref{eq:visco}) but admits also the following implicit representation (see~\cite{BP}):
$$
\forall \, r>0, \quad e(r) + a r^{\frac{1}{5}} e^{\frac{3}{5}}(r) = 1
$$
for some $a > 0$. Then, it is possible to deduce from such representation that $e(\cdot)$ belongs to the class $\E_m$ for any integer $m \geq 1$.
\end{rem}
In~\cite[Theorem~4.5]{AL3}, the authors state that if $e(\cdot)$ belongs to the class $\E_m$ for some integer $m \geq 4$, there exists $\lambda^\dagger \in (0,1]$ such that for any $\lambda \in [0,\lambda^\dagger)$, there exists a unique solution in $L^1_2$ of~(\ref{eq:stat1}) of mass $1$ and vanishing momentum. We denote $G_\lambda$ this solution. 

It is also proved in~\cite[Proposition~3.3]{AL3} that there exist $A>0$, $M>0$ such that for any $\lambda \in (0,\lambda^\dagger]$, $G_\lambda$ satisfies 
\beqn \label{eq:stat2}
\int_{\R^3} G_\lambda (v) \,  e^{A \, |v|^{3/2}} \, dv \leq M. 
\eeqn 

Let us point out that in the case of a constant coefficient, these results were already established. In~\cite[Theorem~1]{BGP} and~\cite[Theorem~5.2 \& Lemma~7.2]{GPV}, existence of solutions and regularity estimates are proved. In~\cite[Section~2.1]{MM2}, it is proved that these estimates are uniform in terms of the coefficient of inelasticity and in~\cite[Theorem~1.2]{MM2}, uniqueness of steady states is proved for a sufficiently small coefficient of inelasticity.  

\smallskip
Throughout the paper, we shall denote 
$$
m(v) = e^{b\langle v \rangle^\beta} \quad \text{with} \quad b>0, \quad \beta \in (0,1)
$$
the stretched exponential weight. We now state several lemmas on steady states $G_\lambda$ which are straightforward consequences of results from~\cite{MM2} and~\cite{AL3}. We shall use them several times in what follows. First, we recall a result of interpolation (see for example~\cite[~Lemma~B.1]{MM}) which is going to be very useful. 

\begin{lem} \label{lem:interp}
For any $k, \, q \in \N$, there exists $C>0$ such that for any $h \in H^{k'}_v \cap L^1_v(m^{12})$ with $k' = 8k+7(1+3/2)$
$$
\|h\|_{W^{k,1}_v(\langle v \rangle ^q m)} \leq C \|h \|_{H^{k'}_v}^{1/8} \|h\|_{L^1_v(m^{12})}^{1/8} \|h\|_{L^1_v(m)}^{3/4}.
$$
\end{lem}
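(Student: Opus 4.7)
The inequality is a weighted Gagliardo--Nirenberg--type interpolation, and the exponents $1/8+1/8+3/4=1$ sum to one as required. The appearance of $1/8=(1/2)^3$ strongly suggests iterating an elementary $L^2$ bisection three times. My plan is to combine four elementary ingredients: Cauchy--Schwarz (to convert a weighted $L^1_v$ norm into a weighted $L^2_v$ norm at the cost of a polynomial factor $\langle v\rangle^\tau$ with $\tau>3/2$); the pointwise identity $\|u\|_{L^2_v}^2\le\|u\|_{L^1_v}\|u\|_{L^\infty_v}$; the Sobolev embedding $H^s_v\hookrightarrow L^\infty_v$ valid on $\R^3$ for any $s>3/2$; and a H\"older interpolation on the weights, exploiting the absorption $\langle v\rangle^Q\le C_{Q,\eps}\,m^\eps$, which holds because $m=e^{b\langle v\rangle^\beta}$ with $\beta<1$ dominates every polynomial.

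Concretely, I would fix a multi-index $|\alpha|\le k$ and apply Cauchy--Schwarz against $\langle v\rangle^{-\tau}\in L^2_v$ to obtain
\begin{equation*}
\|\partial^\alpha h\,\langle v\rangle^q m\|_{L^1_v}\le C\,\|\partial^\alpha h\,\langle v\rangle^{q+\tau} m\|_{L^2_v},
\end{equation*}
and then iterate the bisection $\|u\|_{L^2_v}^2\le\|u\|_{L^1_v}\|u\|_{L^\infty_v}$ three times. At each step the $L^\infty_v$ factor is controlled by $\|\cdot\|_{H^s_v}$ for some fixed $s$ slightly larger than $3/2$, costing an additional $1+3/2$ velocity derivatives and an extra polynomial weight, while each new $L^1_v$ factor is reexpressed as a weighted $L^2_v$ norm via another Cauchy--Schwarz. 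After three iterations the initial $L^2_v$ factor carries the exponent $(1/2)^3=1/8$ and the accumulated $L^\infty_v$ losses collapse into a single factor $\|h\|_{H^{k'}_v}$ with $k'=8k+7(1+3/2)$, the $8=2^3$ reflecting the derivative multiplier per bisection and the $7$ counting the total number of Sobolev embeddings used.

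Finally, the residual polynomial weight losses produced along the way are absorbed into $m^\eps$ for $\eps>0$ arbitrarily small, and the remaining weighted $L^1_v$ factor is split between $L^1_v(m)$ and $L^1_v(m^{12})$ by the H\"older interpolation
\begin{equation*}
\|h\|_{L^1_v(m^{1+\eps})}\le\|h\|_{L^1_v(m)}^{1-\eps/11}\,\|h\|_{L^1_v(m^{12})}^{\eps/11},
\end{equation*}
with $\eps$ chosen so that the final exponent on $L^1_v(m^{12})$ is exactly $1/8$ and on $L^1_v(m)$ is exactly $3/4$. The main obstacle will be the exponent bookkeeping: tracking simultaneously the polynomial weight, the stretched-exponential weight, and the Sobolev regularity index through the three-step iteration, and verifying that the stated $k'=8k+7(1+3/2)$ is precisely the regularity index produced by the scheme. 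The underlying analytic tools are all standard; only the accountancy is delicate.
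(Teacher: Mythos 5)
Your back-end steps (Cauchy--Schwarz against $\langle v\rangle^{-\tau}$, absorption of polynomials via $\langle v\rangle^Q\le C m^{\eps}$, and the H\"older split of $m^{1+\eps}$ between $m$ and $m^{12}$) are fine, but the core of the scheme has a genuine gap: none of the tools you invoke can move derivatives from one factor to another. The bisection $\|u\|_{L^2}^2\le\|u\|_{L^1}\|u\|_{L^\infty}$ and Cauchy--Schwarz only redistribute integrability and weights at \emph{fixed} derivative order, and the Sobolev embedding only converts an $L^\infty$ factor into a higher $H^s$ norm of the same function. Hence, whichever way you split, every factor you keep in $L^1$ or $L^2$ along the iteration still contains $\partial^\alpha h$ with $|\alpha|\le k$, so the chain can never terminate in the derivative-free quantities $\|h\|_{L^1_v(m^{12})}$ and $\|h\|_{L^1_v(m)}$. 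The exponent bookkeeping is also backwards: iterating the bisection on the successive $L^1$ factors puts exponents $1/2+1/4+1/8=7/8$ on the Sobolev-embedded (high-regularity) factors and $1/8$ on the leftover weighted factor, whereas the statement requires $1/8$ on $H^{k'}_v$ and $7/8$ in total on the weighted $L^1_v$ norms. Finally, if you keep the weight on the factor you send to $L^\infty$, the resulting $H^s$ norms carry the stretched-exponential weight $m$, which cannot be absorbed into the \emph{unweighted} $\|h\|_{H^{k'}_v}$; if instead you push the weight onto the $L^1$ factor (via $\|u\,w\|_{L^2}^2\le\|u\|_{L^\infty}\|u\,w^2\|_{L^1}$), you are back to the first problem.

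What is missing is a derivative-trading interpolation, e.g.\ a weighted Landau--Kolmogorov/Gagliardo--Nirenberg inequality of the type $\|h\|_{W^{j,1}_v(\nu)}\le C\|h\|_{W^{2j+s,\cdot}_v}^{1/2}\|h\|_{L^1_v(\nu')}^{1/2}$ (obtained by integration by parts, using that all derivatives of $\nu=\langle v\rangle^q m$ are dominated by $\nu$ itself since $\beta<1$, or by a mollifier/frequency-splitting argument), with $s=1+3/2$ the Sobolev cost. Iterating such a step three times is what produces the recursion $K\mapsto 2K+s$, hence $k'=8k+7(1+3/2)$, with exponent $1/8$ on the strong norm and $7/8$ on derivative-free weighted $L^1_v$ norms; only then does your weight-splitting H\"older finish the argument (and note that exactness of the exponents $1/8$ and $3/4$ requires the factor carrying $\|h\|_{L^1_v(m)}^{3/4}$ to be produced with weight at most $m$, any polynomial excess being diverted to the $m^{12}$ factor). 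For reference, the paper does not prove this lemma at all: it is quoted verbatim from \cite[Lemma~B.1]{MM}, whose proof rests precisely on such a derivative interpolation, which is the ingredient absent from your proposal.
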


Let us now prove estimate on Sobolev norm of $G_\lambda$. 
\begin{lem} \label{lem:Glambdabound}
Let $k$, $q \in \N$. We denote $k' = 8k+7(1+3/2)$. If $e(\cdot)$ belongs to the space $\E_{k'+1}$, then there exists $C>0$ such that
$$
\forall \, \lambda \in (0,\lambda^\dagger], \quad \|G_\lambda\|_{W^{k,1}_v(\langle v \rangle ^qm)} \leq C.
$$
\end{lem}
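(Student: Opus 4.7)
The plan is to apply the interpolation inequality from Lemma~\ref{lem:interp} to $h = G_\lambda$, so that
$$
\|G_\lambda\|_{W^{k,1}_v(\langle v\rangle^q m)} \leq C \, \|G_\lambda\|_{H^{k'}_v}^{1/8} \, \|G_\lambda\|_{L^1_v(m^{12})}^{1/8} \, \|G_\lambda\|_{L^1_v(m)}^{3/4},
$$
and then establish each of the three factors uniformly in $\lambda \in (0,\lambda^\dagger]$.

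For the two weighted $L^1$ factors, I would deduce them from the pointwise exponential moment estimate~(\ref{eq:stat2}). Since $m^{12}(v) = e^{12 b \langle v\rangle^\beta}$ with $\beta \in (0,1)$, and in particular $\beta < 3/2$, a standard Young-type inequality gives, for any $\eps>0$, a constant $C_\eps$ such that $12 b \langle v\rangle^\beta \leq \eps |v|^{3/2} + C_\eps$. Choosing $\eps < A/(12b)$, we obtain $m^{12}(v) \leq C \, e^{A |v|^{3/2}}$, which is $G_\lambda$-integrable with bound $M$ uniformly in $\lambda$ thanks to~(\ref{eq:stat2}); the bound on $\|G_\lambda\|_{L^1_v(m)}$ is obtained identically (and more easily).

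For the $H^{k'}_v$ factor, I would invoke the uniform-in-$\lambda$ Sobolev regularity estimates on $G_\lambda$ proved in~\cite{AL3} in the non-constant case (and already in~\cite[Section~2.1]{MM2} in the constant case). These estimates typically rely on the fact that $G_\lambda$ solves the elliptic equation $\lambda^\gamma \Delta_v G_\lambda = -Q_{e_\lambda}(G_\lambda,G_\lambda)$, which permits a bootstrap from $L^1$ moment control to $H^{k'}$ regularity, the regularity of the collision operator in turn requiring $k'+1$ derivatives of the restitution coefficient — whence the hypothesis $e(\cdot) \in \EE_{k'+1}$. A mild uniformity issue arises because of the degeneracy $\lambda^\gamma \to 0$; however this is precisely handled in~\cite{AL3}, where the $H^{k'}_v$ bounds on $G_\lambda$ are shown to be uniform in $\lambda \in (0, \lambda^\dagger]$.

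The main obstacle is the $H^{k'}_v$ bound, whose uniformity in $\lambda$ is subtle because of the vanishing diffusion parameter; the two exponential-moment factors are immediate consequences of~(\ref{eq:stat2}). Once all three ingredients are plugged into Lemma~\ref{lem:interp}, the conclusion follows with a constant $C$ depending only on $k$, $q$, $b$, $\beta$ and the constants $A$, $M$ in~(\ref{eq:stat2}).
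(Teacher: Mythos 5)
Your proposal is correct and follows essentially the same route as the paper: the uniform weighted $L^1$ bounds come from the stretched-exponential moment estimate~(\ref{eq:stat2}) (your Young-inequality comparison $e^{12b\langle v\rangle^\beta}\leq C e^{A|v|^{3/2}}$ makes explicit what the paper leaves implicit), the uniform $H^{k'}_v$ bound is taken from~\cite[Theorem~3.6]{AL3} (with~\cite{MM2} in the constant case), and the conclusion follows by the interpolation Lemma~\ref{lem:interp}. No substantive difference from the paper's argument.
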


\begin{proof}
We deduce from~(\ref{eq:stat2}) that there exists $C>0$ such that for any $\lambda \in (0,\lambda^\dagger]$, $\|G_\lambda\|_{L^1_v(m)} \leq C$ and $\|G_\lambda\|_{L^1_v(m^{12})} \leq C$. We now use~\cite[Theorem~3.6]{AL3}, it gives us the following:
$$
\forall \, q \in \N, \, \, \forall\,  \ell \in [0,k'], \quad \sup_{\lambda \in (0, \lambda^\dagger]} \|G_\lambda\|_{H^\ell_v(\langle v \rangle^q)} < \infty.
$$
Gathering the previous estimates and using Lemma~\ref{lem:interp}, we obtain the result. Let us mention that in the case of a constant coefficient, we can prove this result using~\cite[~Proposition~2.1]{MM2}.
\end{proof}

We now give an estimate on the difference between $G_\lambda$ and $G_0$, the elastic equilibrium which is a Maxwelian distribution. 
\begin{lem} \label{lem:Glambda-G0}
Let $k \in \N$, $q \in \N$. We denote $k' = 8k+7(1+3/2)$. If $e(\cdot)$ belongs to the space~$\E_{k'+1}$, then there exists a function $\eps_1(\lambda)$ such that for any $\lambda \in (0, \lambda^\dagger]$
$$
\|G_\lambda - G_0\|_{W^{k,1}_v(\langle v \rangle^q m)} \leq \eps_1(\lambda) \quad \text{with} \quad \eps_1(\lambda) \xrightarrow[\lambda \rightarrow 0]{} 0 .
$$
\end{lem}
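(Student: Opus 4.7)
The plan is to reduce the claim to an $L^1_v(m)$ convergence via the interpolation Lemma \ref{lem:interp}, and to obtain that $L^1$ convergence from the continuity-in-$\lambda$ of the stationary profiles already analyzed in \cite{MM2,AL3}. First, I would apply Lemma \ref{lem:Glambdabound} with parameters raised so as to reach both the regularity $k' = 8k + 7(1+3/2)$ in $v$ and the weight $m^{12}$, which yields the uniform bound
\[
\sup_{\lambda \in (0,\lambda^\dagger]} \Bigl( \|G_\lambda\|_{H^{k'}_v} + \|G_\lambda\|_{L^1_v(m^{12})} \Bigr) < \infty.
\]
Since the Maxwellian $G_0$ belongs to all these spaces as well, the triangle inequality gives the same type of uniform bound on the difference $G_\lambda - G_0$.

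Next, I would establish the qualitative convergence $\|G_\lambda - G_0\|_{L^1_v(m)} \to 0$ as $\lambda \to 0$. This can be extracted from the continuity-in-$\lambda$ analysis of stationary solutions in \cite[Theorem~4.5]{AL3} (and \cite[Theorem~1.2]{MM2} in the constant case). For a self-contained derivation, I would argue by compactness: the uniform exponential moment (\ref{eq:stat2}) together with the uniform Sobolev bound makes $\{G_\lambda\}$ precompact in $L^1_v(m)$; any accumulation point $\bar G$ is a probability density with vanishing momentum satisfying the elastic steady equation $Q_1(\bar G,\bar G)=0$, obtained by passing to the limit in (\ref{eq:stat1}) thanks to the smallness of $Q_{e_\lambda}-Q_1$ established in \cite{MM,AL3} and to the vanishing of $\lambda^\gamma\Delta_v G_\lambda$ in the sense of distributions; uniqueness of the elastic steady Maxwellian with the prescribed moments forces $\bar G = G_0$, and since every accumulation point coincides with $G_0$ the whole family converges.

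Finally, I would apply Lemma \ref{lem:interp} to $h := G_\lambda - G_0$,
\[
\|G_\lambda - G_0\|_{W^{k,1}_v(\langle v\rangle^q m)} \leq C\, \|G_\lambda - G_0\|_{H^{k'}_v}^{1/8} \, \|G_\lambda - G_0\|_{L^1_v(m^{12})}^{1/8} \, \|G_\lambda - G_0\|_{L^1_v(m)}^{3/4},
\]
and define $\eps_1(\lambda)$ to be the right-hand side: the first two factors are uniformly bounded by the first step while the last one tends to $0$ by the second step, so $\eps_1(\lambda)\to 0$, as desired. The hard part will be the second step, and in particular the identification of the elastic limit: conservation of mass and momentum passes to the limit trivially, but one also needs the energy/temperature of $G_\lambda$ to converge to the one prescribed for $G_0 = \mu$, which is not automatic from the conservation laws alone and relies on the fine stability analysis of stationary profiles carried out in \cite{MM2,AL3}.
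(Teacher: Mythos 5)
Your overall architecture (interpolation Lemma~\ref{lem:interp} applied to $h=G_\lambda-G_0$, with two factors bounded uniformly and one factor tending to zero) is the same as the paper's, but you place the smallness in the $L^1_v(m)$ factor, whereas the paper places it in the $H^{k'}_v$ factor: its proof consists of quoting \cite[Theorem~4.1]{AL3}, which gives $\|G_\lambda-G_0\|_{H^{k'}_v}\to 0$ directly, and then bounding the remaining factors uniformly via~(\ref{eq:stat2}) and Lemma~\ref{lem:Glambdabound}. A small imprecision in your first step: the uniform $H^{k'}_v$ bound is not a consequence of Lemma~\ref{lem:Glambdabound} itself (which is a weighted $W^{k,1}_v$ estimate), but of \cite[Theorem~3.6]{AL3} quoted inside its proof, while the uniform $L^1_v(m^{12})$ bound comes from~(\ref{eq:stat2}); this is easily repaired.

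The genuine weak point is your second step. The compactness argument, as written, does not identify the limit: passing to the limit in~(\ref{eq:stat1}) only yields $Q_1(\bar G,\bar G)=0$ with mass $1$ and vanishing momentum, which leaves a one-parameter family of Maxwellians, and, as you yourself note, the conservation laws do not select the temperature of $G_0$; the heating and the inelastic dissipation both vanish at rate $\lambda^\gamma$ and the limiting temperature is fixed only by their first-order balance. That selection is precisely the content of the elastic-limit convergence result \cite[Theorem~4.1]{AL3} (and \cite[Lemma~4.3]{MM2} in the constant case); the statements you cite (\cite[Theorem~4.5]{AL3}, \cite[Theorem~1.2]{MM2}) are uniqueness results for fixed small $\lambda$ and do not by themselves give convergence as $\lambda\to 0$. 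So either you invoke the same convergence theorem the paper uses, in which case your argument reduces to the paper's proof with the smallness merely moved to a different factor of the interpolation, or you must close the identification yourself, for instance by integrating~(\ref{eq:stat1}) against $|v|^2$, dividing by $\lambda^\gamma$ and using Assumption~\ref{ass}-(\ref{ass:3}) to pass to the limit in the dissipation term (the computation mirrors Step~4 of the proof of Theorem~\ref{theo:linearmainresult}), which pins down the limiting temperature. As it stands, this step is missing, so the proposal is incomplete at exactly the point you flag.
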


\begin{proof}
Theorem~4.1 from~\cite{AL3} implies that 
$$
\|G_\lambda - G_0\|_{H^{k'}_v} \xrightarrow[\lambda \rightarrow 0]{} 0.
$$
Using this estimate with Lemma~\ref{lem:interp} and Lemma~\ref{lem:Glambdabound}, it yields the result. We here precise that in the case of a constant coefficient, we can conclude using~\cite[Lemma~4.3]{MM2}. 
\end{proof}

\medskip
\subsection{The linearized operator and its splitting} \label{subsection:linearization}
\subsubsection{The collision operator $Q_{e_\lambda}$} 
The formula \eqref{eq:collop} suggests the natural splitting between gain and loss parts $Q_{e_\lambda} =Q_{e_\lambda}^+ - Q_{e_\lambda}^-$. The loss part $Q_{e_\lambda}^-$ can easily be defined in strong form noticing that
$$
\langle Q_{e_\lambda}^-(g,f), \psi \rangle = \int_{\R^3 \times \R^3 \times \Sp^2} g(v_*) f(v) \psi(v) |v-v_*| \, d\sigma \, dv_* \, dv  =: \langle f L(g), \psi \rangle,
$$
where $\langle \cdot, \cdot \rangle$ is the usual scalar product in $L^2$ and $L$ is the convolution operator 
$$
L(g)(v) = 4 \pi (| \cdot| \ast g)(v).
$$
In particular, we can notice that $L$ and $Q_{e_\lambda}^-$ are independent of the normal restitution coefficient. 
We also define the symmetrized (or polar form of the) bilinear collision operator $\widetilde{Q}_{e_\lambda}$ by setting
\begin{align} \label{eq:tildeQ}
\int_{\R^3}  \widetilde{Q}_{e_\lambda} (g,h) \psi \, dv = & \, \frac{1}{2}  \int_{\R^3 \times \R^3 \times \Sp^2}  g(v_*) h(v) |v-v_*|  \left[\psi(v') \, + \, \psi(v'_*) \right] \, d\sigma \, dv_* \, dv \\
&- \frac{1}{2}  \int_{\R^3 \times \R^3 \times \Sp^2}  g(v_*) h(v) |v-v_*|  \left[ \psi(v) \,+ \, \psi(v_*) \right] \, d\sigma \, dv_* \, dv . \nonumber
\end{align}
In other words, $\widetilde{Q}_{e_\lambda} (g,h) = (Q_{e_\lambda}(g,h) + Q_{e_\lambda}(h,g))/2$. The formula~(\ref{eq:tildeQ}) also suggests a splitting $\widetilde{Q}_{e_\lambda} = \widetilde{Q}_{e_\lambda}^+ - \widetilde{Q}_{e_\lambda}^-$ between gain and loss parts. We can notice that we have $\widetilde{Q}^+_{e_\lambda} (g,h) = (Q_{e_\lambda}^+(g,h) + Q_{e_\lambda}^+(h,g))/2$ and  $\widetilde{Q}_{e_\lambda}^- (g,h) = (Q_{e_\lambda}^-(g,h) + Q_{e_\lambda}^-(h,g))/2$.
In the elastic case ($\lambda=0$), we can easily define the collision operator in strong form using the pre-post collisional change of variables:
$$
Q_1(g,f)= \int_{\R^3 \times \Sp^2} \left[f(v') g(v'_*) - f(v) g(v_*) \right] |v-v_*| \, dv_* \, d\sigma.
$$
\smallskip

\subsubsection{Decomposition of the linearized operator}
We now study the equation $\partial_t h = \LL_\lambda h$ introduced in \eqref{eq:Bol2}  for $h=h(t,x,v), \, x \in \T^3, \, v \in \R^3$. We define the operator $\widehat{Q}_{e_\lambda}$ by 
$$
\widehat{Q}_{e_\lambda}(h) = Q_{e_\lambda}  (G_\lambda,h) + Q_{e_\lambda} (h, G_\lambda) = 2 \,  \widetilde{Q}_{e_\lambda} (h,G_\lambda),
$$
where $\widetilde{Q}_{e_\lambda}$ is defined in~(\ref{eq:tildeQ}). Using the weak formulation, we have
$$
\int_{\R^3}  \widehat{Q}_{e_\lambda} (h) \psi \, dv = \int_{\R^3 \times \R^3 \times \Sp^2}  G_\lambda(v) h(v_*) |v-v_*|  \left[\psi(v') \, + \, \psi(v'_*) \, - \, \psi(v) \, - \, \psi(v_*) \right] \, d\sigma \, dv_* \, dv 
$$
for any test function $\psi$.

Let us introduce the decomposition of the linearized operator $\LL_\lambda$. For any $\delta \in (0,1)$, we consider $\Theta_\delta =\Theta_\delta(v,v_*,\sigma) \in \CC^\infty$ bounded by one, which equals one on 
$$
\left\{|v| \le \delta^{-1} \text{ and } 2\delta \le |v-v_*| \le \delta^{-1} \text{ and } |\cos  \theta | \le 1- 2\delta \right\}
$$
and whose support is included in
$$
\left\{|v| \le 2\delta^{-1} \text{ and } \delta \le |v-v_*| \le 2\delta^{-1} \text{ and } |\cos  \theta | \le 1- \delta \right\}.
$$
We introduce the following splitting of the linearized elastic collisional operator $\widehat{Q}_1$ defined as $\widehat{Q}_1(h) =Q_1(G_0,h) + Q_1(h,G_0) $:
$$
\widehat{Q}_1=\widehat{Q}_{1,S}^{+,\ast} + \widehat{Q}_{1,R}^{+,\ast} - L(G_0)
$$
with the truncated operator
$$
\widehat{Q}_{1,S}^{+,\ast} (h) =   \int_{\R^3 \times \Sp^2}  \Theta_\delta \! \left[G_0(v'_*) h(v') \, + \, G_0(v') h(v'_*) - G_0(v) h(v_*) \right] \, |v-v_*| \, dv_* \, d\sigma,
$$
the corresponding remainder operator
$$
\widehat{Q}_{1,R}^{+,\ast} (h) =   \int_{\R^3 \times \Sp^2} (1-\Theta_\delta) \! \left[G_0(v'_*) h(v') \, + \, G_0(v') h(v'_*) - G_0(v) h(v_*) \right] \, |v-v_*| \, dv_* \, d\sigma
$$
and 
$$
L(G_0) = 4\pi \,  \left(G_0 \ast | \cdot | \right).
$$

We can then write a decomposition for the full linearized operator $\LL_\lambda$:
\bean \label{decomp}
\LL_\lambda h
&=& \widehat{Q}_{e_\lambda}(h) -\widehat{Q}_{1}(h) + \widehat{Q}_1(h)  +  \lambda^\gamma \, \Delta_v h - v \cdot  \nabla_x h
\\
&=&  \widehat{Q}_{e_\lambda}(h) - \widehat{Q}_1(h) + \widehat{Q}^{*,+}_{1,S}(h) + \widehat{Q}^{+,*}_{1,R}(h)  - L(G_0) \, h +  \lambda^\gamma \, \Delta_v h - v \cdot  \nabla_x h.
\eean
We denote 
$$
\AA_\delta h :=  \widehat{Q}^{*,+}_{1,S}(h)
$$
and
$$
\BB_{\lambda, \delta} h :=  \widehat{Q}_{e_\lambda}(h) - \widehat{Q}_1(h) + \widehat{Q}^{+,*}_{1,R}(h) +  \lambda^\gamma \, \Delta_v h - v \cdot  \nabla_x h -  L(G_0) \, h.
$$

Thanks to the truncation, we can use the so-called Carleman representation (see~\cite[~Chapter~1, Section~4.4]{Vill}) and write the truncated operator $\AA_\delta$
as an integral operator
\beqn \label{eq:kernelA}
\AA_\delta(h)(v) = \int_{\R^3} k_\delta(v, v_*) \, h(v_*) \, dv_*
\eeqn
for some smooth kernel $k_\delta \in C^\infty_c \left( \R^3 \times \R^3 \right)$.

We also introduce the collision frequency $\nu := L(G_0)$ which satisfies  $\nu(v) \approx \langle v \rangle$ i.e. there exist some constants $\nu_0$, $\nu_1 >0$ such that:
\beqn \label{nu}
\forall \, v \in \R^3, \quad 0<\nu_0 \leq \nu_0 \langle v\rangle \le \nu(v) \le \nu_1\langle v \rangle.
\eeqn

\smallskip
\subsubsection{Spaces at stake}

Let us consider the three Banach spaces 
\begin{align*}
&E_1:= W_x^{s+2,1} W_v^{4, 1} (\langle v \rangle^2 m), \\
&E_0 = E := W_x^{s,1} W^{2,1}_v (\langle v \rangle m), \\
&E_{-1}: = W_x^{s-1,1} L^1_v (m), \\
&\EE := W_x^{s,1} L^1_v (m)
\end{align*} 
for some $s \in \N^*$.

\smallskip
In the remaining part of the paper, we suppose that in addition to Assumptions \ref{ass}, in the non constant case, the following assumption on $e(\cdot)$ holds:
\begin{ass1} \label{ass2}
The coefficient of restitution $e(\cdot)$ belongs to $\E_{k^\dagger+1}$ where \linebreak$k^\dagger:= 32 + 7(1+3/2)$.
\end{ass1}
It allows us to get uniform bounds on the $E_j$-norms of $G_\lambda$ and uniform estimates on the $E_j$-norms of the difference $G_\lambda - G_0$ for $j=-1,0,1$ (thanks to Lemmas~\ref{lem:Glambdabound} and~\ref{lem:Glambda-G0}). 

The operator $\LL_\lambda$ is bounded from $E_j$ to $E_{j-1}$ for $j=0,1$. The operators $\Delta_v$ and $v \cdot \nabla_x$ are clearly bounded from $E_j$ to $E_{j-1}$. As far as $\widehat{Q}_{e_\lambda}$ is concerned, we are going to use the result of interpolation Lemma~\ref{lem:interp}. 
\begin{lem}
Let us consider $k, \, q \in \N$. We denote $k' = 8k+7(1+3/2)$. If $e(\cdot)$ belongs to the space~$\E_{k'+1}$, then $\widehat{Q}_{e_\lambda}$ is bounded from $W^{s,1}_x W^{k,1}_v (\langle v \rangle ^{q+1}m)$ to $W^{s,1}_x W^{k,1}_v (\langle v \rangle ^{q}m)$.
\end{lem}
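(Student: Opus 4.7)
The plan is to reduce the statement to a purely velocity bilinear estimate, and then use the standard splitting $Q_{e_\lambda}=Q_{e_\lambda}^+-Q_{e_\lambda}^-$ to handle the two pieces separately, exploiting everywhere that $G_\lambda$ is space-homogeneous and, by Lemma~\ref{lem:Glambdabound}, uniformly smooth and rapidly decreasing.

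\textbf{Step 1: Reduction to velocity.} Since $G_\lambda=G_\lambda(v)$ does not depend on $x$, the operator $\widehat{Q}_{e_\lambda}$ commutes with $\partial_x^\alpha$, so that $\partial_x^\alpha \widehat{Q}_{e_\lambda}(h)(x,v)=\widehat{Q}_{e_\lambda}\bigl(\partial_x^\alpha h(x,\cdot)\bigr)(v)$ for every multi-index $\alpha$ with $|\alpha|\le s$. Integrating over $x\in\T^3$ and applying Minkowski's inequality, the claim reduces to the purely velocity bilinear estimate
$$
\|\widehat{Q}_{e_\lambda}(h)\|_{W^{k,1}_v(\langle v\rangle^q m)} \le C\, \|h\|_{W^{k,1}_v(\langle v\rangle^{q+1} m)},
$$
with a constant $C$ depending on $k$, $q$ and the regularity of $e(\cdot)$, but uniform in $\lambda\in[0,\lambda^\dagger]$. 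Throughout, we use Lemma~\ref{lem:Glambdabound} to bound $\|G_\lambda\|_{W^{k,1}_v(\langle v\rangle^{q'} m)}\le C$ for whichever $k,q'$ we need.

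\textbf{Step 2: Loss part.} Writing $\widehat{Q}_{e_\lambda}(h)=Q_{e_\lambda}^+(G_\lambda,h)+Q_{e_\lambda}^+(h,G_\lambda)-L(G_\lambda)\,h-L(h)\,G_\lambda$, the two loss contributions are just products. For the first, the convolution $L(G_\lambda)(v)=4\pi(|\cdot|\ast G_\lambda)(v)$ satisfies $|L(G_\lambda)(v)|\lesssim\langle v\rangle$ and $\partial_v^{\beta}L(G_\lambda)$ is bounded uniformly for $|\beta|\ge 1$ (derivatives of $|v-v_*|$ are locally integrable and $G_\lambda$ is smooth). Leibniz then gives $\|L(G_\lambda)h\|_{W^{k,1}_v(\langle v\rangle^q m)}\lesssim \|h\|_{W^{k,1}_v(\langle v\rangle^{q+1} m)}$. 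The second, $L(h)G_\lambda$, is handled by putting derivatives on $G_\lambda$ and using $|L(h)(v)|\lesssim\langle v\rangle\|h\|_{L^1_v(\langle\cdot\rangle)}$ together with the uniform Lemma~\ref{lem:Glambdabound} bounds on $G_\lambda$.

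\textbf{Step 3: Gain part.} For $k=0$ we use the weak formulation: since the post-collisional velocities satisfy $|v'|^2+|v'_*|^2\le |v|^2+|v_*|^2$, we have $\langle v'\rangle\langle v'_*\rangle\le C\langle v\rangle\langle v_*\rangle$ and, by the standard Povzner-type estimate for $\beta\in(0,1)$, $m(v')m(v'_*)\le C\,m(v)m(v_*)$. Dualising the $L^1_v(\langle v\rangle^q m)$-norm against $\psi$ with $\|\psi\|_{L^\infty_v(\langle v\rangle^{-q}m^{-1})}\le 1$ yields
$$
\|Q_{e_\lambda}^+(f,g)\|_{L^1_v(\langle v\rangle^q m)}\lesssim \|f\|_{L^1_v(\langle v\rangle^{q+1} m)}\|g\|_{L^1_v(\langle v\rangle^{q+1} m)},
$$
and taking $\{f,g\}=\{G_\lambda,h\}$ and using Lemma~\ref{lem:Glambdabound} concludes this case.

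\textbf{Step 4: Velocity derivatives of the gain term.} This is the main obstacle. One writes $\partial_v Q^+_{e_\lambda}(f,g)$ by changing the variable of integration so that the derivative falls onto one of the two factors; more precisely, using the translation invariance of the collision (the change $v\mapsto v+h,\; v_*\mapsto v_*+h$ leaves $v'-h,\, v_*'-h$ of the same form), one obtains the commutation identity
$$
\partial_v^\beta Q_{e_\lambda}^+(f,g) = \sum_{\beta_1+\beta_2=\beta} c_{\beta_1,\beta_2}\, Q_{e_\lambda,\beta_1,\beta_2}^+(\partial^{\beta_1} f,\partial^{\beta_2} g),
$$
where each $Q_{e_\lambda,\beta_1,\beta_2}^+$ is a gain-type bilinear operator with a kernel of the same hard-sphere type (up to lower-order symbols in $e_\lambda$, whose derivatives are controlled thanks to the regularity assumption $e(\cdot)\in\mathcal{E}_{k^\dagger+1}$). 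Applying the $L^1$ bilinear estimate of Step~3 to each of these terms, and putting $\partial^{\beta_2}$ on $G_\lambda$ whenever it appears (which is harmless by Lemma~\ref{lem:Glambdabound}), we obtain the required $W^{k,1}_v$-bound. Combining this with Steps~1--3 yields the lemma.
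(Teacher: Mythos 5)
Your argument follows essentially the same route as the paper's proof: reduce to $s=0$ because the $x$-derivatives commute with $\widehat{Q}_{e_\lambda}$ and integrate in $x$, prove a weighted $L^1_v$ bilinear bound, and pass to the $v$-derivatives through the Leibniz rule coming from translation invariance in velocity, controlling every term involving $G_\lambda$ by the uniform bounds of Lemma~\ref{lem:Glambdabound}. The only structural difference is that the paper does not re-derive the velocity bilinear estimate: it quotes~\cite[Proposition~3.1]{MM} in the constant case and~\cite[Theorem~1]{ACG} in the non-constant case, whereas you sketch a direct proof; this is legitimate, but two points of your sketch need repair, neither of which affects the outcome. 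First, in Step~3 the pointwise inequalities $\langle v'\rangle\langle v'_*\rangle\le C\langle v\rangle\langle v_*\rangle$ and $m(v')m(v'_*)\le C\,m(v)m(v_*)$ are false for the stretched exponential weight: take $v_*=0$, $|v|$ large and $\sigma\perp v$ in the elastic case, so that $|v'|^2=|v'_*|^2=|v|^2/2$; then $2\langle v'\rangle^\beta-\langle v\rangle^\beta\approx(2^{1-\beta/2}-1)|v|^\beta\to+\infty$, so the ratio $m(v')m(v'_*)/\bigl(m(v)m(v_*)\bigr)$ blows up. What you actually need -- and what is true, since only $\psi(v')$ appears in the weak form of the non-symmetrized gain term $Q^+_{e_\lambda}(g,f)$ -- is the one-sided bound $\langle v'\rangle^q m(v')\le C\,\langle v\rangle^{q}\langle v_*\rangle^{q}\,m(v)m(v_*)$, which follows from $|v'|^2\le|v|^2+|v_*|^2$ and the subadditivity of $t\mapsto t^{\beta/2}$ for $\beta\in(0,1)$; with the extra factor $|v-v_*|\le\langle v\rangle\langle v_*\rangle$ this gives exactly the bilinear estimate you state. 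Second, in Step~4 the hedge about ``modified kernels with lower-order symbols in $e_\lambda$'' is unnecessary: since $e_\lambda$ is a function of the relative velocity only, the collision operator commutes exactly with simultaneous velocity translations, which yields the exact identity $\partial_v Q^\pm_{e_\lambda}(f,g)=Q^\pm_{e_\lambda}(\partial_v f,g)+Q^\pm_{e_\lambda}(f,\partial_v g)$ -- this is precisely~(\ref{derivQ}) in the paper -- so no derivatives of $e$ ever appear, and the regularity class of $e(\cdot)$ is used only to guarantee the uniform bounds on $G_\lambda$ and its derivatives via Lemma~\ref{lem:Glambdabound}.
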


 \begin{proof}
 As far as the case of a constant coefficient is concerned, Proposition 3.1 from~\cite{MM} gives us 
 $$
 \|\widehat{Q}_{e_\lambda}(h) \|_{L^1_v(\langle v \rangle^q m)} \leq C \|G_\lambda\|_{L^1_v(\langle v \rangle^{q+1} m)} \|h\|_{L^1_v(\langle v \rangle^{q+1} m)} \leq C  \|h\|_{L^1_v(\langle v \rangle^{q+1} m)},
 $$
 where the last inequality comes from Lemma~\ref{lem:Glambdabound}. Concerning the case of a non-constant coefficient, we use both Lemma~\ref{lem:Glambdabound} and~\cite[Theorem~1]{ACG} and we get:
 $$
  \|\widehat{Q}_{e_\lambda}(h) \|_{L^1_v(\langle v \rangle^q m)} \leq C \|G_\lambda\|_{L^1_v(\langle v \rangle^{q+1} m)} \|h\|_{L^1_v(\langle v \rangle^{q+1} m)} \leq C  \|h\|_{L^1_v(\langle v \rangle^{q+1} m)}.
$$
The $x$-derivatives commute with the operator $\widehat{Q}_{e_\lambda}$, therefore we can do the proof with $s=0$ without loss of generality. We first look at the case $L^1_xL^1_v(\langle v \rangle^q m)$ before treating the $v$-derivatives. Using Fubini theorem and the previous inequalities, we obtain
$$
\|\widehat{Q}_{e_\lambda} h \|_{L^1_xL^1_v(\langle v \rangle^q m)} \leq C \|h\|_{L^1_xL^1_v(\langle v \rangle^{q+1} m)}.
$$
We now treat the case $L^1_xW^{1,1}_v(\langle v \rangle^q m)$. We use the property
\beqn \label{derivQ}
\partial_v Q_{e_\lambda}^\pm (f,g) = Q_{e_\lambda}^\pm(\partial_v f,g) + Q_{e_\lambda}^\pm(f, \partial_v g).
\eeqn
We then compute
$$
\partial_v \widehat{Q}_{e_\lambda} h = Q_{e_\lambda}(\partial_v G_\lambda, h) + Q_{e_\lambda} (G_\lambda, \partial_v h) + Q_{e_\lambda} (\partial_v h, G_\lambda) + Q_{e_\lambda} (h, \partial_v G_\lambda).
$$
Using Lemma~\ref{lem:Glambdabound},~\cite[Proposition~3.1]{MM} in the constant case and~\cite[Theorem~1]{ACG} in the non-constant case, the $L^1_v(\langle v \rangle^q m)$-norm of each term can be bounded by $C \|h\|_{W^{1,1}_v(\langle v \rangle^{q+1} m)}$. Again using Fubini theorem, we deduce that
$$
\|\partial_v \widehat{Q}_{e_\lambda} h\|_{L^1_x L^1_v(\langle v \rangle^q m)} \leq C \|h \|_{L^1_x W^{1,1}_v(\langle v \rangle^{q+1} m)}. 
$$
The higher-order terms are dealt with in a similar manner, which concludes the proof. 
\end{proof}

Under the assumptions made on $e(\cdot)$, using the previous lemma, we can conclude that $\widehat{Q}_{e_\lambda}$ is bounded from $E_j$ to $E_{j-1}$ for $j=0,1$.
\medskip

\medskip

\subsection{Hypodissipativity of $\BB_{\lambda,\delta}$ and boundedness of $\AA_\delta$}
\begin{lem} \label{lem:dissip1}
Let us consider $k \geq0$, $s \ge k$ and $q \geq 0$. We denote $k' = 8k+7(1+3/2)$. If $e(\cdot)$ belongs to the space~$\E_{k'+1}$, then there exist $\lambda_1 \in (0,\lambda^\dagger)$, $\delta>0$ and $\alpha_0>0$ such that for any $\lambda \in [0,\lambda_1]$, $\BB_{\lambda,\delta} +\alpha_0$ is hypodissipative in $W^{s,1}_x W^{k,1}_v(\langle v\rangle ^q m)$. 
\end{lem}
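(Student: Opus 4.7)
My plan is to establish the dissipativity estimate via a weighted $L^1$ computation against $\mathrm{sign}(h)$ and then upgrade it to Sobolev spaces in $x$ and $v$ through an equivalent norm with decreasing weights on higher derivatives. I first treat the base case $s=k=0$, i.e.\ the space $X_q := L^1_xL^1_v(\langle v\rangle^q m)$, and reduce the general case to it by induction on the derivative order.

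For the base case, I compute
$$
\frac{d}{dt}\|h\|_{X_q} = \int_{\T^3\times\R^3}\mathrm{sign}(h)\,\BB_{\lambda,\delta}(h)\,\langle v\rangle^q m\,dv\,dx
$$
(justified by mollifying the sign) and treat each of the six pieces separately. The transport term $-v\cdot\nabla_x h$ contributes zero by integration by parts in $x$. The multiplicative term $-L(G_0)h=-\nu h$ produces the driving dissipation: by (\ref{nu}),
$$
\int\mathrm{sign}(h)(-\nu h)\,\langle v\rangle^q m\,dv\,dx \leq -\nu_0\int |h|\,\langle v\rangle^{q+1} m\,dv\,dx.
$$
The diffusion $\lambda^\gamma\Delta_v h$, after two integrations by parts, is controlled by $C\lambda^\gamma\int|h|\,|\Delta(\langle v\rangle^q m)|$; since $m=e^{b\langle v\rangle^\beta}$ with $\beta\in(0,1)$ satisfies $\Delta(\langle v\rangle^q m)=O(\langle v\rangle^{q+2\beta-2}m)$ and $2\beta-2<1$, this piece is absorbed by the $\nu_0$-dissipation once $\lambda$ is small. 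The truncated remainder $\widehat{Q}^{+,*}_{1,R}$ has operator norm $\eta_1(\delta)\to 0$ from $L^1_v(\langle v\rangle^{q+1}m)$ into $L^1_v(\langle v\rangle^q m)$: the cutoff $(1-\Theta_\delta)$ restricts integration to the grazing, small-relative-velocity, or large-velocity region, where the Gaussian decay of $G_0$ provides control (after separating gain and loss parts and using a change of variables on the gain term). Finally,
$$
\widehat{Q}_{e_\lambda}(h)-\widehat{Q}_1(h) = Q_{e_\lambda}(G_\lambda-G_0,h)+Q_{e_\lambda}(h,G_\lambda-G_0)+(Q_{e_\lambda}-Q_1)(G_0,h)+(Q_{e_\lambda}-Q_1)(h,G_0),
$$
whose first two contributions are small by Lemma~\ref{lem:Glambda-G0} combined with~\cite[Proposition~3.1]{MM} (or~\cite[Theorem~1]{ACG} in the non-constant case), and whose last two contributions are of order $\lambda$ by the assumption~(\ref{ass:3}) on $e(\cdot)$. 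Fixing $\delta$ small so that $\widehat{Q}^{+,*}_{1,R}$ consumes at most $\nu_0/4$ of the dissipation, and then $\lambda_0$ small so that the diffusion and the perturbative difference together consume at most another $\nu_0/4$, I obtain $\frac{d}{dt}\|h\|_{X_q}\leq -\alpha_0\|h\|_{X_q}$ with $\alpha_0:=\nu_0/2$.

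To handle $W^{s,1}_xW^{k,1}_v(\langle v\rangle^q m)$, I introduce the equivalent norm
$$
\Nt h \Nt := \sum_{|\alpha|\leq s,\,|\beta|\leq k,\,|\alpha|+|\beta|\leq\max(s,k)}\eta^{|\alpha|+|\beta|}\,\|\partial_x^\alpha\partial_v^\beta h\|_{L^1_xL^1_v(\langle v\rangle^q m)}
$$
with a small parameter $\eta>0$. Every component of $\BB_{\lambda,\delta}$ commutes with $\partial_x$ and the transport still cancels after integration in $x$, so the $x$-derivatives require no extra argument. For the $v$-derivatives, Leibniz applied to $\partial_v^\beta(-\nu h)$ and to $\partial_v^\beta\widehat{Q}_{e_\lambda}(h)$ via (\ref{derivQ}) produces a principal part (the same operator applied to $\partial_v^\beta h$), to which the base-case estimate applies, plus commutator terms of strictly lower $v$-order, controlled in $L^1(\langle v\rangle^q m)$ by $C\|h\|_{W^{|\beta|-1,1}_v(\langle v\rangle^{q+1}m)}$; the factor $\eta^{|\beta|-1}$ at the next lower level dominates these once $\eta$ is small. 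The diffusion term acquires $\partial_v^{\beta+2}h$ after differentiation, but the $\lambda^\gamma$ prefactor together with $\eta^2$ makes its absorption immediate.

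The main obstacle I anticipate is the quantitative smallness of the grazing remainder $\widehat{Q}^{+,*}_{1,R}$ in the weighted Sobolev target norm, including its $v$-derivatives; this amounts to a refinement of the truncation arguments of~\cite{MM} using (\ref{derivQ}) together with the Carleman-style kernel (\ref{eq:kernelA}) transposed to the complementary region. A secondary difficulty is the rigorous $L^1$ manipulation of the Laplacian, which requires the standard sign-mollification argument; once this is done, the condition $\beta<1$ makes the absorption automatic. With these two technical points in hand, the ordering ``first $\delta$, then $\lambda_0$, then $\eta$'' produces the required $\alpha_0>0$.
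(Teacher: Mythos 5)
Your base-case computation in $L^1_xL^1_v(\langle v\rangle^q m)$ follows the same route as the paper (same splitting into transport, $-\nu h$, diffusion, grazing remainder, and the inelastic--elastic difference, with the same ordering of smallness in $\delta$ and $\lambda$), and your treatment of those pieces is essentially correct, up to one imprecision: the smallness of the last two contributions $(Q_{e_\lambda}-Q_1)(G_0,h)$ and $(Q_{e_\lambda}-Q_1)(h,G_0)$ in the operator norm $L^1_v(\langle v\rangle^{q+1}m)\to L^1_v(\langle v\rangle^{q}m)$ is not a direct consequence of Assumption~\ref{ass}-(\ref{ass:3}); it is a genuine quantitative estimate on the difference of the gain operators (the paper invokes \cite[Theorem~3.11]{AL3}, or \cite[Proposition~3.2]{MM} in the constant case), whose proof compares the post-collisional velocities $v'_\lambda$ and $v'_0$ and therefore consumes one derivative of the smooth argument, producing a bound of the form $C\,\lambda^{\gamma/(8+3\gamma)}\|G\|_{W^{1,1}_v(\langle v\rangle^{q+1}m)}\|h\|_{L^1_v(\langle v\rangle^{q+1}m)}$ rather than an ``order $\lambda$'' bound obtained for free from the assumption.

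The genuine gap is in your passage to $v$-derivatives. You list the commutators coming from $-\nu h$, from $\widehat{Q}_{e_\lambda}$ via~(\ref{derivQ}) and from the diffusion, and claim they are all of strictly lower $v$-order, but you omit the commutator with the transport: $\partial_{v_i}(v\cdot\nabla_x h)=v\cdot\nabla_x\partial_{v_i}h+\partial_{x_i}h$, so the evolution of $\partial_v^\beta\partial_x^\alpha h$ contains terms $\partial_x^{\alpha+e_i}\partial_v^{\beta-e_i}h$ of the \emph{same} total order, with one $v$-derivative traded for one $x$-derivative; these are not controlled by $\|h\|_{W^{|\beta|-1,1}_v(\langle v\rangle^{q+1}m)}$. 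This term is precisely why the operator is only \emph{hypo}dissipative, and your equivalent norm with symmetric weights $\eta^{|\alpha|+|\beta|}$ does not fix it: the offending term $\eta^{|\alpha|+|\beta|}\|\partial_x^{\alpha+e_i}\partial_v^{\beta-e_i}h\|$ carries exactly the same weight as the dissipation $-\alpha_0\,\eta^{|\alpha|+|\beta|}\|\partial_x^{\alpha+e_i}\partial_v^{\beta-e_i}h\|_{\langle v\rangle^{q+1}}$ available at that node, so absorption would require $\alpha_0$ to beat a combinatorial constant $\geq 1$ independent of $\eta$, which is not guaranteed (and worsens with $s,k$). The fix, which is what the paper does, is an \emph{asymmetric} equivalent norm penalizing $v$-derivatives relative to $x$-derivatives of the same total order, e.g.\ $\|h\|+\|\nabla_x h\|+\eta\|\nabla_v h\|$ (or weights $\eta^{|\beta|}$ depending only on the $v$-order): then the transport commutator appears with an extra factor $\eta$ relative to the dissipation of the corresponding $x$-derivative component and is absorbed for $\eta$ small. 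As written, your higher-order step would fail without this modification.
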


\begin{proof}
Observe first that the $x$-derivatives commute with the operator $\BB_{\lambda,\delta}$, therefore we can do the proof for $s=0$ without loss of generality. We first look at the case $L^1_x L^1_v( \langle v \rangle^q m)$ before treating the $v$-derivatives. We compute 
$$
\begin{aligned}
&\quad \int_{\R^3\times\T^3}\BB_{\lambda,\delta} (h) \, \text{sign} (h) \, dx \, \langle v \rangle^q m(v) \, dv \\
&=   \int_{\R^3\times\T^3} (\widehat{Q}_{e_\lambda} (h) - \widehat{Q}_1(h) ) \, \text{sign} (h) \, dx \, \langle v \rangle^q m(v) \, dv \\
&\quad +\int_{\R^3\times\T^3} \widehat{Q}_{1,R}^{+,*}(h) \, \text{sign} (h) \, dx \, \langle v \rangle^q m(v) \, dv \\
&\quad + \lambda^\gamma    \int_{\R^3\times\T^3} \Delta_v h \, \text{sign} (h) \, dx \, \langle v \rangle^q m(v) \, dv \\
&\quad - \int_{\R^3\times\T^3} v \cdot \nabla_x h \, \text{sign} (h) \, dx \, \langle v \rangle^q m(v) \, dv \\
&\quad- \int_{\R^3\times\T^3} \nu \, h \, \text{sign} (h) \, dx \, \langle v \rangle^q m(v) \, dv \\
&=:  I_1(h) + I_2(h) + I_3(h) + I_4(h) + I_5(h).
\end{aligned}
$$

We first deal with $I_1$ splitting the difference $\widehat{Q}_{e_\lambda} - \widehat{Q}_1$ into several parts and using that $Q_{e_\lambda}^- = Q_1^-$:
\begin{align*}
\widehat{Q}_{e_\lambda} h -\widehat{Q}_1 h = & \, Q_{e_\lambda}^+ (h,G_\lambda) -  Q_1^+ (h,G_\lambda) +  Q_1^+ (h,G_\lambda-G_0) \\
&+  Q_{e_\lambda}^+ (G_\lambda,h) -  Q_1^+ (G_\lambda,h) +  Q_1^+ (G_\lambda-G_0,h) \\
&- Q_1^-(h,G_\lambda - G_0) - Q_1^-(G_\lambda - G_0,h) \\
=& \, 2 \left[\widetilde{Q}_{e_\lambda}^+ (h, G_\lambda) - \widetilde{Q}_1^+(h,G_\lambda) + \widetilde{Q}_1^+(h, G_\lambda - G_0) - \widetilde{Q}_1^-(h,G_\lambda - G_0) \right]. 
\end{align*}
We now use a result given by~\cite[Proposition~3.1]{MM} which can be easily extended to others weights of type $\langle v \rangle^q m$. We can treat together the terms $\widetilde{Q}_1^+ (h,G_\lambda-G_0)$ and \linebreak$\widetilde{Q}_1^-(h, G_\lambda - G_0)$. Because of~\cite[Proposition 3.1]{MM}, their $L^1_v(\langle v \rangle^q m)$-norm are bounded from above by $C \, \|G_\lambda -G_0\|_{L^1_v(\langle v \rangle^{q+1} m)} \|h\|_{L^1_v(\langle v \rangle^{q+1} m)}.$
Then, using Lemma~\ref{lem:Glambda-G0}, we obtain
\bear \label{eq:epsilon11}
\|\widetilde{Q}_1^{\pm} (h,G_\lambda-G_0)\|_{L^1_v( \langle v \rangle^q m)} \leq   C \, \eps_1(\lambda) \|h\|_{L^1_v(\langle v \rangle^{q+1} m)}, \quad \eps_1(\lambda) \xrightarrow[\lambda \rightarrow 0]{} 0.
\eear
Concerning the term $\widetilde{Q}_{e_\lambda}^+ (h_t,G_\lambda) -  \widetilde{Q}_1^+ (h_t,G_\lambda)$, we use~\cite[Theorem~3.11]{AL3} (we can use~\cite[Proposition~3.2]{MM} for the constant case) and Lemma~\ref{lem:Glambdabound}. It gives us that there exists $\lambda_1 \in (0,\lambda^\dagger]$ such that for any $\lambda \in (0,\lambda_1)$:
\begin{align}
\| \widetilde{Q}_{e_\lambda}^+ (h,G_\lambda) -  \widetilde{Q}_1^+ (h,G_\lambda) \|_{L^1_v(\langle v \rangle^q m)}  &\leq C \lambda^{\frac{\gamma}{8+3\gamma}} \, \|G_\lambda \|_{W^{1,1}_v ( \langle v \rangle^{q+1} m)} \, \|h\|_{L^1_v(\langle v \rangle^{q+1} m)} \nonumber \\
&\leq C \eps_2(\lambda) \,\|h\|_{L^1_v(\langle v \rangle^{q+1} m)}, \quad \eps_2(\lambda) \xrightarrow[\lambda \rightarrow 0]{} 0. \label{eq:epsilon21}
\end{align}
In \cite{AL3} and \cite{MM}, the results are only stated in the case $q=0$ but it is easy to extend these results using the fact that $\langle v'\rangle^q \leq C \, \langle v \rangle^q \, \langle v_* \rangle^q$.
Gathering (\ref{eq:epsilon11}) and (\ref{eq:epsilon21}), we thus obtain
\beqn \label{eq:term1}
I_1(h) \le \int_{\R^3\times \T^3} \left|\widehat{Q}_{e_\lambda} (h) - \widehat{Q}_1 (h)\right|  \, dx \, \langle v \rangle^q m(v) \, dv \leq \eps(\lambda) \|h\|_{L^1_x L^1_v(\langle v \rangle^{q+1} m)}, \quad \eps(\lambda) \xrightarrow[\lambda \rightarrow 0]{} 0.
\eeqn

As far as $I_2$ is concerned, we first recall that~\cite[Proposition~2.1]{Mou} establishes that there holds
$$
\forall \, h \in L^1_v(\langle v \rangle m), \quad \|\widehat{Q}_{1,R}^{+,*}(h)\|_{L^1_v(m)} \leq \Lambda(\delta) \|h\|_{L^1_v(\langle v \rangle m)}, \quad  \Lambda(\delta) \xrightarrow[\delta \rightarrow 0]{} 0,
$$
where however the definition of $\Theta_\delta$ is slightly different and only the case $q=0$ is treated. But it is straightforward to extend the proof to the present situation. We hence have
\beqn \label{eq:term2}
I_2(h) \leq \int_{\R^3 \times \T^3} |\widehat{Q}_{1,R}^{+,*}(h_t)| \, dx \, \langle v \rangle^q m(v) \, dv \leq \Lambda(\delta) \|h\|_{L^1_x L^1_v(\langle v \rangle^{q+1} m)}, \quad \Lambda(\delta) \xrightarrow[\delta \rightarrow 0]{} 0.
\eeqn

Concerning the term with the Laplacian, we write performing two integrations by parts 
\begin{align*}
\int_{\R^3\times\T^3} \Delta_v h_t \,  \text{sign} (h_t)  \, \langle v \rangle^q m \, dv \, dx
=& -\int_{\T^3\times\R^3} \left|\nabla_v h \right|^2 \, \text{sign}'(h)  \, \langle v \rangle^q m \, dv \, dx \\
&- \int_{\T^3\times\R^3} \nabla_v h \,  \, \text{sign}(h) \cdot \nabla_v (\langle v \rangle^q m(v)) \, dv \,  dx \\
\le&  \int_{\T^3\times\R^3} \nabla_v |h | \cdot \nabla_v(\langle v \rangle^q m) \, dv \, dx \\
=&  \int_{\T^3\times\R^3} |h| \, \Delta_v (\langle v \rangle^q m)  \, dv \, dx \\
=& \int_{\R^3\times\T^3}  |h| \, \langle v \rangle^q m \, \frac{\Delta_v (\langle v \rangle^q m)}{\langle v \rangle^q m} \,dx \, dv.
\end{align*}
Since $\Delta_v (\langle v \rangle^q m) / (\langle v \rangle^q m)$ is bounded in $\R^3$, we can write 
\beqn \label{eq:term3}
I_3(h) \leq C \, \lambda^\gamma \, \|h\|_{L^1_x L^1_v(\langle v \rangle^q m)} \leq C \, \lambda^\gamma \, \|h\|_{L^1_x L^1_v(\langle v \rangle^{q+1} m)}.
\eeqn

We notice that 
\beqn \label{eq:term4}
I_4(h) =0
\eeqn  
because the term $v \cdot \nabla_x h$ has a divergence structure in~$x$.

Finally, let us deal with $I_5$. We use property~(\ref{nu}), more precisely the fact that $\nu(v)$ is bounded below by $\nu_0 \langle v \rangle$:
\beqn \label{eq:term5}
I_5(h) = - \int_{\R^3 \times \T^3} |h| \,  dx \, \nu \, \langle v \rangle^q m(v) \, dv \leq - \nu_0 \, \|h\|_{L^1_x L^1_v (\langle v \rangle^{q+1} m)}.
\eeqn

Gathering~(\ref{eq:term1}), (\ref{eq:term2}), (\ref{eq:term3}), (\ref{eq:term4}) and~(\ref{eq:term5}), we obtain that for any $\lambda \in (0,\lambda_1)$
$$
\int_{\R^3\times\T^3} \BB_{\lambda, \delta} h \, \text{sign} (h) \, dx \, \langle v \rangle^q m(v) \, dv
\leq (\Lambda(\delta) + \eps(\lambda) + C \lambda^\gamma - \nu_0) \|h\|_{L^1_x L^1_v(\langle v \rangle^{q+1} m)}. 
$$
Up to make decrease the value of $\lambda_1$, we can suppose that for any $\lambda \in [0, \lambda_1]$, we have $\eps(\lambda) + C\lambda^\gamma < \nu_0$. Then, we choose $\delta$ close enough to $0$ in order to have 
\beqn \label{eq:alpha0}
\alpha_0 := - \left( \Lambda(\delta) + \max_{\lambda \in [0,\lambda_1]} \left[ \eps(\lambda) + C\lambda^\gamma \right] - \nu_0\right) > 0.
\eeqn
We hence have 
$$
\int_{\R^3\times\T^3} \BB_{\lambda, \delta} h \, \text{sign} (h) \, dx \, \langle v \rangle^q m(v) \, dv
\leq - \alpha_0 \, \|h\|_{L^1_x L^1_v(\langle v \rangle^{q+1}  m)}. 
$$
In particular, we deduce that for any $\lambda \in [0,\lambda_1]$, $\BB_{\lambda, \delta} + \alpha_0$ is dissipative in $L^1_x L^1_v(\langle v \rangle^q m)$. 

Let us now treat the $v$-derivatives. We are going to deal with the case $W^{1,1}_x W^{1,1}_v(\langle v \rangle^q m)$, the higher-order cases are similar. 
Thanks to~(\ref{derivQ}), we compute:
\begin{align*}
\partial_v (\BB_{\lambda,\delta} h) =& \partial_v\left(\widehat{Q}^{+,*}_{1,R} (h) - \nu h \right) + \partial_v \left((\widehat{Q}_{e_\lambda} - \widehat{Q}_1) (h) \right) + \lambda^\gamma \Delta_v \partial_v h - \partial_x h - v \cdot \nabla_x \partial_v h.  
\end{align*}

Let us treat the first term: 
$$ 
\partial_v\left(\widehat{Q}^{+,*}_{1,R} (h) - \nu h \right) = \widehat{Q}^{+,*}_{1,R} (\partial_v h) - \nu \, \partial_v h + \RR h
$$
with
$$
\RR h := Q_1(h, \partial_v G_0) + Q_1(\partial_v G_0, h) - (\partial_v \AA_\delta)(h) + \AA_\delta (\partial_v h).
$$
Using the form (\ref{eq:kernelA}) of the operator $\AA_\delta$ and performing one integration by part, we can show that
$$
\| (\partial_v \AA_\delta)(h) \|_{{L^1_x L^1_v(\langle v \rangle^q m)}} + \|\AA_\delta (\partial_v h)\|_{{L^1_x L^1_v(\langle v \rangle^q m)}} \leq C_\delta \, \|h\|_{L^1_x L^1_v (\langle v \rangle^{q} m)}.
$$
Combining this inequality with estimates~\cite[Proposition~3.1]{MM} on the elastic bilinear operator $Q_1$ of, we obtain
$$
\| \RR h \|_{L^1_x L^1_v(\langle v \rangle^q m)} \leq \, C_\delta \, \|h\|_{L^1_x L^1_v (\langle v \rangle^{q+1} m)}
$$
for some constant $C_\delta >0$. 

Let us now deal with the second term coming from the difference $\widehat{Q}_{e_\lambda} - \widehat{Q}_1$:
\begin{align*}
\partial_v \left((\widehat{Q}_{e_\lambda} - \widehat{Q}_1) h \right) =& \, (\widehat{Q}_{e_\lambda} - \widehat{Q}_1) ( \partial_v h) \\
&+ 2 \left[ \widetilde{Q}_{e_\lambda}^+(h, \partial_v G_\lambda) - \widetilde{Q}_1^+(h, \partial_v G_\lambda) \right] \\
&+2 \left[\widetilde{Q}_1^+(h, \partial_v (G_\lambda-G_0)) - \widetilde{Q}_1^-(h, \partial_v (G_\lambda-G_0)) \right].
\end{align*}
Arguing as before, we obtain 
$$
\| \widetilde{Q}_{e_\lambda}^+(h, \partial_v G_\lambda) - \widetilde{Q}_1^+(h, \partial_v G_\lambda) \|_{L^1_x L^1_v ( \langle v \rangle ^q m)} \leq \eps(\lambda) \| h \|_{L^1_x L^1_v ( \langle v \rangle ^{q+1} m)} 
$$
and
$$
\| \widetilde{Q}_1^+(h, \partial_v (G_\lambda-G_0)) - \widetilde{Q}_1^-(h, \partial_v (G_\lambda-G_0)) \|_{L^1_x L^1_v ( \langle v \rangle ^q m)} \leq \eps(\lambda) \| h \|_{L^1_x L^1_v ( \langle v \rangle ^{q+1} m)} 
$$
with $\eps(\lambda) \xrightarrow[\lambda \rightarrow 0]{} 0$.

All together, we deduce that 
$$
\partial_v (\BB_{\lambda,\delta} h)  = \BB_{\lambda, \delta} \partial_v h + \RR'(h)
$$
with 
$$
\| \RR' h \|_{L^1_x L^1_v (\langle v \rangle^q m)} \leq C_\delta \, \| h \|_{L^1_x L^1_v (\langle v \rangle^{q+1} m)} + \eps(\lambda) \,  \| h \|_{L^1_x L^1_v (\langle v \rangle^{q+1} m)}  + \| \nabla_x h \|_{L^1_x L^1_v (\langle v \rangle^q m)}.
$$

We now use the proof of the previous case to finally deduce the following estimate:
\begin{align*}
&\quad \int_{\R^3\times \T^3} \partial_v (\BB_{\lambda,\delta} h) \, \text{sign} (\partial_v h) \,dx \, \langle v \rangle^q m \, dv\\
&\leq\,  -\alpha_0  \| \nabla_v h \|_{L^1_x L^1_v ( \langle v \rangle^{q+1} m)} +  C_\delta \, \| h \|_{L^1_x L^1_v (\langle v \rangle^{q+1} m)} \\
&\quad + \eps(\lambda) \,  \| h \|_{L^1_x L^1_v (\langle v \rangle^{q+1} m)}  + \| \nabla_x h \|_{L^1_x L^1_v (\langle v \rangle^q m)},
\end{align*}
where $\alpha_0$ is defined in~(\ref{eq:alpha0}). 

Again using the proof of the previous case, we also have:
\begin{align*}
&\int_{\R^3\times \T^3} \BB_{\lambda,\delta} (h) \, \text{sign} (h) \,dx \, \langle v \rangle^q m \, dv + \int_{\R^3\times \T^3} \partial_x (\BB_{\lambda,\delta} h) \, \text{sign} (\partial_x h) \,dx \, \langle v \rangle^q m \, dv \\
\leq \,&- \alpha_0 \left(\|h\|_{L^1_x L^1_v(\langle v \rangle^{q+1} m)} + \|\nabla_x h\|_{L^1_x L^1_v(\langle v \rangle^{q+1}  m)} \right).
\end{align*}

We now introduce the norm 
$$
\|h\|_* := \|h\|_{L^1_xL^1_v(\langle v \rangle^q m)} + \|\nabla_x h\|_{L^1_xL^1_v(\langle v \rangle^q m)} + \eta \|\nabla_v h\|_{L^1_xL^1_v(\langle v \rangle^q m)} 
$$
for some $\eta >0$ (to be fixed later) which is equivalent to the classical $W^{1,1}_x W^{1,1}_v ( \langle v \rangle^q m)$-norm. We deduce
\begin{align*}
&\quad \int_{\R^3\times \T^3} \BB_{\lambda,\delta} (h) \, \text{sign} (h) \,dx \, \langle v \rangle^q m \, dv + \int_{\R^3\times \T^3} \partial_x (\BB_{\lambda,\delta} h) \, \text{sign} (\partial_x h) \,dx \, \langle v \rangle^q m \, dv \\
&\qquad \qquad \qquad + \eta \,
\int_{\R^3\times \T^3} \partial_v (\BB_{\lambda,\delta} h) \, \text{sign} (\partial_v h) \,dx \, \langle v \rangle^q m \, dv \\
 &\leq - \alpha_0
\left( \|h\|_{L^1_xL^1_v(\langle v \rangle^{q+1}  m)} + \|\nabla_x h\|_{L^1_xL^1_v(\langle v \rangle^{q+1} m)} + \eta \|\nabla_v h\|_{L^1_xL^1_v(\langle v \rangle^{q+1} m)} \right) \\
&+ \eta \left(C_\delta \, \| h \|_{L^1_x L^1_v (\langle v \rangle^{q+1} m)} + \eps(\lambda) \,  \| h \|_{L^1_x L^1_v (\langle v \rangle^{q+1} m)}  + \| \nabla_x h \|_{L^1_x L^1_v (\langle v \rangle^{q+1} m)}\right) \\
&\leq  (-\alpha_0 + o(\eta)) \left( \|h\|_{L^1_xL^1_v(\langle v \rangle^{q+1}  m)} + \|\nabla_x h\|_{L^1_xL^1_v(\langle v \rangle^{q+1} m)} + \eta \|\nabla_v h\|_{L^1_xL^1_v(\langle v \rangle^{q+1} m)} \right)
\end{align*}
with $o(\eta)\xrightarrow[\eta \rightarrow 0]{} 0$. We choose $\eta$ close enough to $0$ so that $\alpha_1 := \alpha_0 - o(\eta) > 0$. We thus obtain that $\BB_{\lambda,\delta}+\alpha_1$ with $\alpha_1>0$ is dissipative in $W^{1,1}_xW^{1,1}_v(\langle v \rangle^q m)$ for the norm $\| \cdot \|_*$ and thus hypodissipative in $W^{1,1}_xW^{1,1}_v(\langle v \rangle^q m)$.
\end{proof}

Let us clarify what implies the previous lemma in the following result:
\begin{lem} \label{lem:dissip}
Under the assumptions \ref{ass} and \ref{ass2} made on $e(\cdot)$, there exist $\lambda_1 \in (0,\lambda^\dagger]$, $\delta>0$ and $\alpha_0>0$ such that for any $\lambda \in [0,\lambda_1]$, $\BB_{\lambda,\delta} + \alpha_0$ is hypodissipative in $E_j$, $j=-1,0,1$ and $\EE$.
\end{lem}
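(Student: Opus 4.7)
The plan is to apply the preceding lemma directly to the three specific spaces $\EE_{-1}$, $\EE_0$ and $\EE_1$, since each of them is of the form $W^{s',1}_x W^{k',1}_v(\langle v \rangle^{q'} m)$: namely $(s',k',q')=(s-1,0,0)$ for $\EE_{-1}$, $(s',k',q')=(s,2,1)$ for $\EE_0$, and $(s',k',q')=(s+2,4,2)$ for $\EE_1$. The standing regularity assumption $e(\cdot)\in \E_{k^\dagger+1}$ with $k^\dagger = 32+7(1+3/2)$ is precisely tailored so that, via Lemmas~\ref{lem:Glambdabound} and~\ref{lem:Glambda-G0}, every quantity appearing in the proof of the previous lemma (bounds on $G_\lambda$, smallness of $G_\lambda-G_0$, the estimates from~\cite{MM},~\cite{AL3} and~\cite{ACG}) is available for each of these three triples.

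For each $j\in\{-1,0,1\}$ the previous lemma furnishes constants $\lambda_0^{(j)}\in(0,\lambda^\dagger)$, $\delta^{(j)}>0$ and $\alpha_0^{(j)}>0$ such that, for every $\lambda\in[0,\lambda_0^{(j)}]$, the operator $\BB_{\lambda,\delta^{(j)}}+\alpha_0^{(j)}$ is hypodissipative in $\EE_j$. To obtain a single triple $(\lambda_0,\delta,\alpha_0)$ valid across all three spaces one sets
$$
\lambda_0 := \min_j \lambda_0^{(j)}, \qquad \delta := \min_j \delta^{(j)}, \qquad \alpha_0 := \min_j \alpha_0^{(j)}.
$$
The only point to check is that lowering $\delta$ below some $\delta^{(j)}$ does not destroy the estimate in $\EE_j$. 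Inspecting the proof of the previous lemma, the role of $\delta$ is twofold: it enters the bound on $I_2$ through the quantity $\Lambda(\delta)$, which is monotone in the sense that $\Lambda(\delta)\to 0$ as $\delta\to 0$, so shrinking $\delta$ can only improve the constant $\alpha_0$ appearing in~(\ref{eq:alpha0}); and it enters the remainder terms at the level of $v$-derivatives through a constant $C_\delta$, whose contribution is absorbed by choosing the auxiliary parameter $\eta$ of the equivalent dissipative norm small enough (as is done at the end of the proof of the previous lemma).

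With these uniform choices, the conclusion of the previous lemma holds simultaneously in $\EE_{-1}$, $\EE_0$ and $\EE_1$: for every $\lambda\in[0,\lambda_0]$ the operator $\BB_{\lambda,\delta}+\alpha_0$ is hypodissipative in $\EE_j$ for $j=-1,0,1$. I do not anticipate a genuine obstacle here, as the lemma is essentially a bookkeeping corollary of the previous one; the only mild care required is the monotonicity verification for $\delta$ just discussed, together with keeping track of the regularity index demanded of $e(\cdot)$, which has already been fixed once and for all in the standing assumption of this subsection.
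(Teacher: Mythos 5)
Your proposal is correct and coincides with the paper's treatment: the paper states Lemma~\ref{lem:dissip} as an immediate consequence of the preceding lemma, exactly by specializing it to the three triples $(s',k',q')$ corresponding to $\EE_{-1}$, $\EE_0$, $\EE_1$ (for which the standing assumption $e(\cdot)\in\E_{k^\dagger+1}$ is calibrated) and taking common constants $\lambda_0$, $\delta$, $\alpha_0$. Your remark on harmonizing $\delta$ is the right bookkeeping point; note that one does not even need monotonicity of $\Lambda(\delta)$, since $\Lambda(\delta)\to 0$ already allows choosing a single $\delta$ small enough to satisfy the finitely many constraints simultaneously.
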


The boundedness of $\AA_\delta$ is treated in~\cite{GMM}. Let us recall Lemma 4.16 of~\cite{GMM}.

\begin{lem} \label{lem:Adelta}
For any $s \in \N$, the operator $\AA_\delta$ maps $L^1_v(\langle v \rangle)$ into $H^s_v$ functions with compact support, with explicit bounds (depending on $\delta$) on the $L^1_v(\langle v \rangle) \rightarrow H^s_v$ norm and on the size of the support.

More precisely, there are two constants $C_{s, \delta}$ and $R_\delta$ such that for any $h \in L^1_v(\langle v \rangle )$ 
$$
K:= \mathrm{supp} \, \AA_\delta h \subset B(0, R_\delta), \quad \|\AA_\delta h \|_{H^s_v(K)} \leq C_{s, \delta} \| h \|_{L^1_v(\langle v \rangle)}.
$$
In particular, we deduce that $\AA_\delta$ is in $\BBB(E_j)$ for $j=-1,0,1$ and $\AA_\delta$ is in $\BBB(\EE,E)$.
\end{lem}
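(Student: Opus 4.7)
The proof proposal is to exploit the Carleman representation already introduced just above, namely
$$
\AA_\delta h(v) = \int_{\R^3} k_\delta(v,v_*) \, h(v_*) \, dv_*, \qquad k_\delta \in \CC^\infty_c(\R^3 \times \R^3),
$$
and then simply to differentiate under the integral sign. The truncation $\Theta_\delta$ forces $|v|\le 2\delta^{-1}$, $\delta\le|v-v_*|\le 2\delta^{-1}$ and $|\cos\theta|\le 1-\delta$ in the gain-part integral, so a standard Carleman-type change of variables (cf. \cite[Chapter~1, Section~4.4]{Vill}) produces a kernel $k_\delta$ that is smooth, compactly supported in both variables, and entirely determined by $\Theta_\delta$ and $G_0$ (which is itself a Maxwellian). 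Because $k_\delta$ is compactly supported in its first variable, there exists $R_\delta>0$ such that $\mathrm{supp}\, \AA_\delta h \subset B(0,R_\delta)$ for every $h$, yielding the support assertion.

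The Sobolev bound is then immediate: for any multi-index $\alpha$ with $|\alpha|\le s$,
$$
\partial_v^\alpha \AA_\delta h(v) = \int_{\R^3} \partial_v^\alpha k_\delta(v,v_*) \, h(v_*) \, dv_*,
$$
so that, using $|\partial_v^\alpha k_\delta(v,v_*)| \le \|\partial_v^\alpha k_\delta\|_{L^\infty} \, \mathbf{1}_{|v_*|\le R_\delta'}$ and bounding $\mathbf{1}_{|v_*|\le R_\delta'} \le \langle v_*\rangle \langle R_\delta'\rangle^{-1}\cdot\langle R_\delta'\rangle$, one gets a pointwise estimate
$$
|\partial_v^\alpha \AA_\delta h(v)| \le C_{s,\delta} \, \|h\|_{L^1_v(\langle v\rangle)} \, \mathbf{1}_{|v|\le R_\delta}.
$$
Integrating over the compact set $K=B(0,R_\delta)$ gives the stated $H^s_v(K)$ bound with an explicit constant $C_{s,\delta}$ depending only on finitely many $L^\infty$-norms of derivatives of $k_\delta$.

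It remains to transfer this to boundedness on the spaces $\EE_j$. Two observations are key: first, $\AA_\delta$ acts only on the velocity variable, hence commutes with $\partial_x$, so any estimate lifts immediately from $L^1_v$ to $W^{s,1}_x L^1_v$ by Fubini; second, since $\AA_\delta h$ is supported in the fixed ball $K$, all the weights $\langle v\rangle^q m(v)$ are bounded above and below by constants $C_\delta^{\pm}$ on $K$, so weighted and unweighted norms on $K$ are equivalent. Choosing $s$ large enough so that the Sobolev embedding $H^s(K) \hookrightarrow W^{k,\infty}(K)$ holds on the bounded smooth domain $K$ (which, in dimension $3$, requires $s>k+3/2$), we deduce the chain
$$
\|\AA_\delta h\|_{W^{k,1}_v(\langle v\rangle^q m)} \le C'_\delta \|\AA_\delta h\|_{W^{k,\infty}_v(K)} \le C''_{s,\delta} \|\AA_\delta h\|_{H^s_v(K)} \le C'''_{s,\delta} \|h\|_{L^1_v(\langle v\rangle)},
$$
and, integrating in $x$ after commuting with $\partial_x^{\sigma}$, $|\sigma|\le s$, the required $\BBB(\EE_j)$ bound for $j=-1,0,1$.

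There is no real obstacle here; the only point needing slight care is verifying that the Carleman kernel inherits the smoothness and compact support from the cut-off $\Theta_\delta$ and the Maxwellian $G_0$, which is standard. The rest is differentiation under the integral plus Sobolev embedding on a fixed ball.
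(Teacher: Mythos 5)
Your argument is correct and is essentially the route of the paper, which does not reprove the lemma but quotes it from \cite{GMM} (Lemma~4.16): there, as in your proposal, the truncation $\Theta_\delta$ together with the Carleman representation yields a smooth, compactly supported kernel $k_\delta$, and the $L^1_v(\langle v\rangle)\to H^s_v(K)$ bound follows by differentiating under the integral sign. Your transfer to $\BBB(\EE_j)$ (commutation with $x$-derivatives, equivalence of the weighted norms on the fixed ball $K$, and $\langle v\rangle \leq C\,\langle v\rangle^q m(v)$) correctly supplies the ``in particular'' part, which the paper likewise leaves as an immediate consequence.
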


\medskip

\subsection{Regularization properties of $T_n:=\left( \AA_\delta \, S_{\BB_{\lambda,\delta}} \right)^{(*n)}$}
Let us consider $\lambda_1$ and $\alpha_0$ provided by Lemma~\ref{lem:dissip}. 
\begin{lem} \label{lem:Tn}
Let $\lambda$ be in $(0, \lambda_1]$. The time indexed family $T_n$ of operators satisfies the following: for any $\alpha_0' \in (0,\alpha_0)$, there are some constructive constants $C_\delta>0$ and $R_\delta$ such that for any $t \geq 0$
$$
\text{supp} \, T_n(t)h \subset K := B(0, R_\delta),
$$
and
\begin{align}
&\| T_1(t) h\|_{W^{s+1,1}_{x,v} (K)} \leq C \frac{e^{-\alpha_0't}}{t} \|h\|_{W^{s,1}_{x,v}( \langle v \rangle m)}, \quad \text{if} \, \, s \geq 1; \label{eq:T_1}\\
&\| T_2(t) h\|_{W^{s+1/2,1}_{x,v} (K)} \leq C e^{-\alpha_0't} \|h\|_{W^{s,1}_{x,v}( \langle v \rangle m)}, \quad \text{if} \, \, s \geq 0. 
\end{align}
\end{lem}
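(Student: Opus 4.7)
My approach breaks into three steps: the support localization, the regularization of $T_1$, and the deduction of the $T_2$ estimate by time-convolution.

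For the support, by the very definition $T_n(t) = (\AA_\delta S_{\BB_{\lambda,\delta}})^{(*n)}$ the outermost operation in $T_n(t)$ is an application of $\AA_\delta$. By Lemma~\ref{lem:Adelta} the range of $\AA_\delta$ consists of functions supported in $B(0,R_\delta) =: K$, so $\mathrm{supp}\,T_n(t)h \subset K$ automatically, independently of the underlying regularity of $h$.

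For the $T_1$ bound, I would exploit two commutation properties. First, $\partial_{x_i}$ commutes with both $\AA_\delta$ (whose kernel $k_\delta(v,v_*)$ is $x$-independent) and $\BB_{\lambda,\delta}$ (all coefficients depend only on $v$, and $v\cdot\nabla_x$ trivially commutes with $\partial_{x_i}$). Second, Lemma~\ref{lem:Adelta} provides arbitrary gain of $v$-regularity at the price of $\delta$-dependent constants. The crux is to convert one $x$-derivative into a $v$-derivative, which is where the factor $1/t$ appears. I would use the classical kinetic trading identity: for the free transport semigroup $S_T(t)h(x,v):=h(x-tv,v)$,
\[
t\,\partial_{x_i} S_T(t) h = \partial_{v_i}\bigl(S_T(t)h\bigr) - S_T(t)\,\partial_{v_i} h.
\]
Since $\BB_{\lambda,\delta}$ is a perturbation of $-v\cdot\nabla_x$ by $\lambda^\gamma\Delta_v$ and bounded operators, a Duhamel expansion of $S_{\BB_{\lambda,\delta}}$ around $S_T$ allows this trade up to remainders, which are absorbed by the infinite $v$-smoothing of $\AA_\delta$ and controlled uniformly in $\lambda\in(0,\lambda_0)$ by the hypodissipative decay from Lemma~\ref{lem:dissip}. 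Polynomial prefactors in $t$ are absorbed into $e^{-\alpha_0' t}$ thanks to the strict inequality $\alpha_0'<\alpha_0$.

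For $T_2 = T_1 \ast T_1$, the bare estimate on $T_1$ has a non-integrable singularity $1/s$ at the origin, so a straight composition diverges. I would first combine the boundedness of $\AA_\delta$ on $\EE_j$ with Lemma~\ref{lem:dissip} to obtain $\|T_1(t)\|_{W^{s,1}\to W^{s,1}} \leq Ce^{-\alpha_0' t}$, and then interpolate with the gain estimate to get
\[
\|T_1(s)\|_{W^{s,1}\to W^{s+1/2,1}} \leq \frac{C\,e^{-\alpha_0' s}}{s^{1/2}}.
\]
Since $s^{-1/2}$ is integrable, convolving once yields
\[
\|T_2(t)\|_{W^{s,1}\to W^{s+1/2,1}} \leq \int_0^t \frac{C\,e^{-\alpha_0' s}}{s^{1/2}}\,C\,e^{-\alpha_0'(t-s)}\,ds \leq C\,e^{-\alpha_0' t},
\]
after slightly weakening $\alpha_0'$ so as to absorb the $t^{1/2}$ factor. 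The main obstacle is Step~2: executing the trade of $x$-derivatives for $v$-derivatives with the sharp $1/t$ rate in the presence of the full perturbation $\BB_{\lambda,\delta}+v\cdot\nabla_x$, which contains the collisional remainders $\widehat{Q}_{e_\lambda}-\widehat{Q}_1$ and $\widehat{Q}_{1,R}^{+,*}$ as well as the Fokker--Planck term $\lambda^\gamma\Delta_v$. The commutator and Duhamel bookkeeping must be carried out so that these remainders are controlled uniformly in $\lambda$ and do not degrade the singular rate near $t=0$.
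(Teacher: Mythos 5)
Your skeleton is the right one and matches the paper's: the support statement follows from the smoothing/compact-support property of $\AA_\delta$ (Lemma~\ref{lem:Adelta}) applied as the outermost factor; the $1/t$ rate comes from trading one $x$-derivative for one $v$-derivative along the transport flow; interpolation then gives a half-gain with the integrable rate $t^{-1/2}$, and a single convolution yields the $T_2$ bound. Moreover the identity you quote, $t\,\partial_{x_i}S_T(t)h=\partial_{v_i}(S_T(t)h)-S_T(t)\partial_{v_i}h$, is exactly the commutation $D_t S_T(t)=S_T(t)\nabla_v$ with $D_t:=t\nabla_x+\nabla_v$ that the paper exploits. The genuine gap is in how you propose to implement this trade for the full semigroup $S_{\BB_{\lambda,\delta}}$. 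A Duhamel expansion of $S_{\BB_{\lambda,\delta}}$ around the free transport $S_T$ does not work as sketched: the terms you would have to treat as ``remainders'' include the multiplication by $-\nu(v)$, which is unbounded and is precisely the source of the dissipativity, so the exponential decay $e^{-\alpha_0 t}$ (uniform in $\lambda$) cannot be recovered from an expansion whose principal part $S_T$ has no decay at all; and the ``infinite $v$-smoothing of $\AA_\delta$'' is of no help for the inner semigroup factors of the Duhamel iterates, since $\AA_\delta$ only acts as the last operator in $T_1$. You yourself flag this bookkeeping as the unresolved obstacle, which means the crucial estimate (\ref{eq:T_1}) is not actually established by your argument.

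The paper avoids Duhamel entirely: setting $g_t:=e^{\BB_{\lambda,\delta}t}(\partial_x^\beta h_0)$, it applies $D_t$ directly and uses that $D_t$ commutes with $\partial_t+v\cdot\nabla_x$ and with $\Delta_v$, while the commutators with the collision terms (via~(\ref{derivQ})) and with $\AA_\delta$ (the kernel-derivative operators $\AA^1_\delta,\AA^2_\delta$) assemble into a closed equation $\partial_t(D_tg_t)=\BB_{\lambda,\delta}(D_tg_t)+\II_\delta(g_t)$ with $\II_\delta$ bounded from $L^1_v(\langle v\rangle^2m)$ to $L^1_v(\langle v\rangle m)$; the hypodissipativity argument of Lemma~\ref{lem:dissip} applied to the twisted functional $\eta\|D_tg_t\|+\|g_t\|$ then yields~(\ref{eq:D_t}) and hence the $e^{-\alpha_0''t}/t$ bound, uniformly in $\lambda\in(0,\lambda_0)$. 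If you want to keep your route you must replace the expansion around $S_T$ by this kind of commutator-plus-hypodissipativity argument (or keep $-\nu$ in the principal part). One further bookkeeping point: the full-gain estimate~(\ref{eq:T_1}) costs one $v$-derivative of the datum (whence $s\ge1$), and so does the interpolated $t^{-1/2}$ estimate; in the $T_2$ convolution it is the $v$-regularization of the inner $T_1$ (Lemma~\ref{lem:Adelta}, not mere boundedness of $\AA_\delta$) that supplies this derivative, which is why $T_2$ holds for $s\ge0$. Your interpolation step, as written with source space $W^{s,1}_{x,v}$, glosses over this and would not close at $s=0$.
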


\begin{proof}
We first consider $h_0 \in W^{s,1}_{x,v}(\langle v \rangle m)$, $s \in \N$. Using Lemma~\ref{lem:Adelta} and the fact that the \linebreak $x$-derivatives commute with both $\AA_\delta$ and $\BB_{\lambda,\delta}$ and thus with $T_1(t)$, we get
$$
\|T_1(t) h_0\|_{W^{s,1}_x W^{s+1,1}_v(K)} = \| \AA_\delta \, S_{\BB_{\lambda,\delta}}(t) h_0 \|_{W^{s,1}_x W^{s+1,1}_v(K)}
\leq C \, \|S_{\BB_{\lambda,\delta}}(t) h_0 \|_{W^{s,1}_{x,v}(K)}.
$$
We then use that $\BB_{\lambda, \delta} + \alpha_0$ is dissipative in $W^{s,1}_{x,v}(\langle v \rangle m)$ (Lemma~\ref{lem:dissip}) to obtain
\beqn \label{eq:T_1'}
\|T_1(t) h_0\|_{W^{s,1}_x W^{s+1,1}_v(K)} \leq C e^{-\alpha_0 t} \| h_0\|_{W^{s,1}_{x,v} ( \langle v \rangle m)}.
\eeqn

Assume now $h_0 \in W^{s,1}_x W^{s+1,1}_v (\langle v \rangle m)$ and consider $g_t = e^{\BB_{\lambda, \delta}t} ( \partial^\beta_x h_0)$, for any $|\beta| \leq s$, which satisfies (using the fact that the $x$-derivatives commute with the semigroup)
$$
\partial_t g_t + v \cdot \nabla_x g_t = 
Q_{e_\lambda} (G_\lambda, g_t) + Q_{e_\lambda}(g_t, G_\lambda) + \lambda^\gamma \Delta_v g_t - \AA_\delta g_t.
$$
Let us define $D_t:= t \nabla_x + \nabla_v$. $D_t$ commute with the free transport equation and the Laplacian $\Delta_v$. Using these properties of commutativity and the property~(\ref{derivQ}) of the collision operator, we have
\begin{align*}
\partial_t (D_t g_t) + v \cdot \nabla_x(D_t g_t) = &\, Q_{e_\lambda}(\nabla_vG_\lambda, g_t) + Q_{e_\lambda}( g_t, \nabla_v G_\lambda) + Q_{e_\lambda}(G_\lambda, D_t g_t) \\
& \, +Q_{e_\lambda}(D_t g_t, G_\lambda) + \lambda^\gamma \Delta_v g_t - D_t(\AA_\delta g_t).
\end{align*}
With the notations of~(\ref{eq:kernelA}), we rewrite the last term as
\begin{align*}
D_t(\AA_\delta g_t)(v) =& \, D_t \int_{\R^3} k_\delta(v,v_*) \, g_t(v_*) \, dv_* \\
=&  \int_{\R^3} \nabla_v k_\delta(v,v_*) \, g_t(v_*) \, dv_* -  \int_{\R^3} k_\delta(v,v_*) \, \nabla_{v_*} g_t(v_*) \, dv_* \\
&+  \int_{\R^3} k_\delta(v,v_*) \, (D_t g_t)(v_*) \, dv_* \\
=& \, \AA^1_\delta g_t + \AA^2_\delta g_t + \AA_\delta (D_t g_t),
\end{align*}
where $\AA^1_\delta$ stands for the integral operator associated to the kernel $\nabla_v k_\delta$ and $\AA^2_\delta$ stands for the integral operator associated to the kernel $\nabla_{v_*} k_\delta$. All together, we may write
$$
\partial_t(D_t g_t) = \BB_{\lambda, \delta} (D_t g_t) + \II_\delta(g_t)
$$
with 
$$
\II_\delta f = Q_{e_\lambda} ( \nabla_v G_\lambda, f) + Q_{e_\lambda} (f, \nabla_v G_\lambda) - \AA^1_\delta f - \AA^2_\delta f,
$$
which satisfies
$$
\| \II_\delta f \|_{L^1_v(\langle v \rangle m)} \leq C_\delta \|f\|_{L^1_v(\langle v\rangle^2 m)}.
$$

Then arguing as in the proof of Lemma~\ref{lem:dissip}, we obtain, for any $\alpha_0'' \in (0,\alpha_0)$ and for $\eta$ small enough
$$
\frac{d}{dt} \left( e^{\alpha_0'' t} \int_{\R^3 \times \T^3} (\eta |D_t g_t| + |g_t|) \langle v \rangle m \, dx \, dv \right) \leq 0,
$$
which implies
\beqn \label{eq:D_t}
\forall \, t \geq 0, \quad \|D_t g_t \|_{L^1(\langle v \rangle m)} + \| g_t \|_{L^1(\langle v \rangle m)} \leq \eta^{-1} e^{-\alpha_0''t} \|h_0\|_{W^{s,1}_x W^{1,1}_v(\langle v \rangle m)}.
\eeqn
Then, we write 
$$
\begin{aligned}
t \, \nabla_x T_1(t) (\partial^\beta_x h_0) &= \int_{\R^3} k_\delta(v,v_*) \left[ (D_t g_t) - \nabla_{v_*} g_t \right] (x,v_*) \, dv_* \\
&= \AA_\delta(D_t g_t) + \AA^2_\delta g_t,
\end{aligned}
$$
Using~(\ref{eq:D_t}), we hence get
$$
\begin{aligned}
t \, \|\nabla_x T_1(t) (\partial^\beta_x h_0) \|_{L^1(K)}
&\leq C \, \left( \|D_t g_t \|_{L^1(\langle v \rangle m)} + \|g_t\|_{L^1(\langle v \rangle m)} \right) \\
&\leq C \, \eta^{-1} e^{-\alpha_0''t} \|h_0\|_{W^{s,1}_x W^{1,1}_v (\langle v \rangle m)}.
\end{aligned}
$$
Together with estimate~(\ref{eq:T_1'}) and Lemma~\ref{lem:Adelta}, for $s \geq 0$, we conclude that
$$
\|T_1(t) (\partial^\beta_x h_0)\|_{W^{1,1}_x W^{s+1,1}_v(K)} 
\leq \frac{C e^{-\alpha_0'' t}}{t} \| h_0 \|_{W^{s,1}_x W^{1,1}_v(\langle v \rangle m)},
$$
which in turn implies~(\ref{eq:T_1}). 

Now interpolating the last inequality and~(\ref{eq:T_1'}), for $s \geq 0$, we have
\beqn \label{eq:T_1''}
\|T_1(t) h_0\|_{W^{s+1/2,1}_{x,v}(K)} \leq \frac{C e^{-\alpha_0''t}}{\sqrt{t}} \| h_0 \|_{W^{s,1}_x W^{1,1}_v ( \langle v \rangle m)}.
\eeqn 
Putting together~(\ref{eq:T_1}) and~(\ref{eq:T_1''}), for $s \geq 0$, we obtain
\begin{align*}
\|T_2(t) h_0 \|_{W^{s+1/2,1}_{x,v} (K)} 
&\leq \int_0^t \|T_1(t-s) T_1(s) h_0 \|_{W^{s+1/2,1}_{x,v} (K)}  \, ds \\
&\leq C \int_0^t \frac{e^{-\alpha_0''(t-s)}}{(t-s)^{1/2}} \|T_1(s) h_0 \|_{W^{s,1}_x W^{1,1}_v(\langle v \rangle m)} \, ds \\
&\leq C \left( \int_0^t \frac{e^{-\alpha_0''(t-s)}}{(t-s)^{1/2}} e^{-\alpha_0 s} \, ds \right) \| h_0 \|_{W^{s,1}_{x,v}(\langle v \rangle m)} \\
&\leq C \sqrt{t} \,e^{-\alpha_0'' t} \| h_0 \|_{W^{s,1}_{x,v}( \langle v \rangle m)},
\end{align*}
which concludes the proof. 
\end{proof}

Let us now recall~\cite[Lemma~2.17]{GMM} which yields an estimate on the norms \linebreak $\|T_n\|_{\BBB(E_j,E_{j+1})}$ for $j=-1,0$.

\begin{lem} \label{lem:Tngeneral} 
Let $X$, $Y$ be two Banach space with $X \subset Y$ dense with continuous embedding, and consider $L \in \BBB(X)$, $\LL \in \BBB(Y)$ such that $\LL_{|X}=L$ and $a \in \R$. We assume that there exist some intermediate spaces 
$$
X = \EE_J \subset \EE_{J-1} \subset ... \subset \EE_2 \subset \EE_1 = Y, \quad J \geq 2
$$
such that, denoting $\AA_j := \AA_{|\EE_j}$ and $\BB_j := \BB_{|\EE_j}$
\begin{itemize}
\item[{\bf (i)}]  $(\BB_j - a)$ is hypodissipative and $\AA_j$ is bounded on $\EE_j$ for $1 \leq j \leq J$;
\item[{\bf (ii)}] there are some constants $\ell_0 \in \N^*$, $C \geq 1$, $K \in \R$, $\gamma \in [0,1)$ such that
$$
\forall \, t \geq 0, \quad \|T_{\ell_0}(t) \|_{\BBB(\EE_j, \EE_{j+1})} \leq C \frac{e^{Kt}}{t^\gamma},
$$
for $1 \leq j \leq J-1$, with the notation $T_\ell := \left(\AA S_\BB \right)^{(*\ell)}$.
\end{itemize}
Then for any $a'>a$, there exist some constructive constants $n \in \N$, $C_{a'}\geq 1$ such that
$$
\forall \,  t \geq 0, \quad \|T_n(t) \|_{\BBB(Y,X)} \leq C_{a'} e^{a't}.
$$
\end{lem}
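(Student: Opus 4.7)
The plan is to iterate the convolution identity $T_n = (\AA S_\BB)^{(*n)}$ so as to compound the space-climbing effect of (ii) with the controlled temporal growth within each $\EE_j$ provided by (i). From (i) one has $\|T_1(t)\|_{\BBB(\EE_j)} \le \|\AA_j\|_{\BBB(\EE_j)}\,\|S_{\BB_j}(t)\|_{\BBB(\EE_j)} \le C_j\, e^{at}$, so the convolution bound for operator-valued families recalled earlier in this section yields
\[
\|T_m(t)\|_{\BBB(\EE_j)} \le \frac{C_j^m\, t^{m-1}}{(m-1)!}\, e^{at}
\]
for every integer $m \ge 1$ and every $j$. Iterating (ii) across the $J-1$ rungs of the scale $\EE_1 \subset \cdots \subset \EE_J$, the factors $s^{-\gamma} e^{Ks}$ compound through Beta-type integrals to give
\[
\|T_{(J-1)\ell_0}(t)\|_{\BBB(\EE, E)} \le C'\, t^{(J-1)(1-\gamma) - 1}\, e^{Kt},
\]
with the exponent of $t$ being $>-1$ since $\gamma < 1$, so the singularity at $t = 0$ remains integrable.

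I would then take $n = m + (J-1)\ell_0$ with $m$ to be chosen large, split $T_n = T_m * T_{(J-1)\ell_0}$, and view $T_m$ as acting within $E = \EE_J$ (controlled by the first estimate above) while $T_{(J-1)\ell_0}$ is the regularizing piece from $\EE$ to $E$ (controlled by the second). A last application of the convolution bound then yields an estimate of the form $C\, t^{\alpha}\, e^{\max(a,K)t}$ for some $\alpha>0$, and absorbing the polynomial factor into an arbitrarily small exponential excess delivers $\|T_n(t)\|_{\BBB(\EE, E)} \le C_{a'}\, e^{a' t}$ for any $a' > \max(a, K)$. To obtain the statement for any $a' > a$ even when $K > a$, I would exploit that in the applications---in particular for Lemma~\ref{lem:Tn}---the regularization bound in (ii) is actually available for any $K$ arbitrarily close to $a$ from above, with $C$ and $\ell_0$ depending on the chosen $K$: given $a'>a$ I first fix some $K \in (a, a')$ and then run the above argument with that $K$.

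The main obstacle I expect is precisely this sharp tracking of exponential rates: a convolution of two bounds with rates $a$ and $K$ only delivers rate $\max(a, K)$, so a naive time-domain iteration cannot beat the rate $K$ coming from (ii). When the flexibility in (ii) just described is not available and $K > a$, the cleanest alternative is to switch to the resolvent picture: the holomorphy of $\RR_{\BB_j}(z)$ on $\{\Re e\, z > a\}$ from (i) together with the decay of $(\AA\RR_\BB(z))^{\ell_0}$ between spaces inherited from (ii) allow a contour shift in the inverse Laplace representation of $T_n(t)$ down to $\Re e\, z = a'$ for any $a'>a$, provided $n$ is chosen large enough that $(\AA\RR_\BB(z))^n$ is integrable along this contour as a $\BBB(\EE,E)$-valued function.
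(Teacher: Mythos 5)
You have correctly located the crux of the lemma, but your proposal does not actually prove the statement as given. Your main argument (convolving the within-space bound $\|T_m(t)\|_{\BBB(\EE_j)}\le C^m t^{m-1}e^{at}/(m-1)!$ with the iterated climbing bound $\|T_{(J-1)\ell_0}(t)\|_{\BBB(\EE,E)}\lesssim t^{(J-1)(1-\gamma)-1}e^{Kt}$) only yields the rate $\max(a,K)$, as you admit. Your first workaround --- assuming that (ii) holds with $K$ arbitrarily close to $a$ --- is a strengthening of the hypotheses: it happens to be available in this paper's application (Lemma~\ref{lem:Tn} gives the rate $-\alpha_0'$ for every $\alpha_0'<\alpha_0$, so Lemma~\ref{lem:Tnfinal} would indeed follow from your simpler route), but the lemma under review is an abstract statement with a single fixed $K\in\R$, possibly much larger than $a$, and its conclusion claims every $a'>a$; a proof must handle that case. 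Your second workaround, the contour shift in the inverse Laplace representation, has a genuine gap: hypothesis (ii) only provides bounds on $(\AA\RR_\BB(z))^{\ell_0}$ in $\BBB(\EE_j,\EE_{j+1})$ for $\Re e\, z>K$ (the Laplace transform of $e^{Kt}t^{-\gamma}$ does not converge below that abscissa), and for $a<\Re e\, z\le K$ the operator is only known to be holomorphic with values in $\BBB(\EE_j)$. Holomorphy in the big space does not imply that the values remain in $\BBB(\EE_j,\EE_{j+1})$ with bounds on the larger half-plane, so the ``decay between spaces inherited from (ii)'' on the shifted contour is precisely what needs to be proved --- the step is circular.

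The actual proof (this lemma is quoted from \cite[Lemma~2.17]{GMM}; the present paper does not reprove it) stays in the time domain but is organized differently. Write $T_n=(T_{\ell_0})^{(*N)}$ with $N=n/\ell_0$ large, i.e.
$$
T_n(t)=\int_{\{s_1+\dots+s_N=t,\ s_k\ge0\}} T_{\ell_0}(s_N)\cdots T_{\ell_0}(s_1)\,ds ,
$$
and for each time configuration choose on which $J-1$ of the $N$ blocks to use the climbing estimate (ii): one picks blocks whose elapsed times are among the smallest, hence $s_k\le Ct/N$, so the total exponential cost of the climbing steps is at most $e^{CK(J-1)t/N}$; all the remaining blocks are estimated \emph{within} a fixed space $\EE_j$ by $\|T_{\ell_0}(s_k)\|_{\BBB(\EE_j)}\le C^{\ell_0}s_k^{\ell_0-1}e^{as_k}/(\ell_0-1)!$, contributing the rate $a$ plus polynomial factors controlled by factorials, while the singular factors $s_k^{-\gamma}$ on the climbing blocks remain integrable since $\gamma<1$. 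Partitioning the simplex according to which blocks are ``small'' (finitely many choices) and taking $N$ large enough that $CK(J-1)/N<a'-a$ gives the rate $a'$ for any $a'>a$, independently of the size of $K$. This block-selection argument is the idea missing from your proposal; without it (or an equivalent analytic-continuation statement proved by other means), the conclusion for $a'\in(a,K]$ is not reached.
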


Combining Lemmas~\ref{lem:dissip} and~\ref{lem:Tn}, we can apply Lemma~\ref{lem:Tngeneral} and deduce the following result:
\begin{lem} \label{lem:Tnfinal}
Let $\lambda$ be in $(0,\lambda_1]$. For any $\alpha_0' \in (0, \alpha_0)$, there exist some constructive constants $n \in \N$ and $C_{\alpha_0'} \geq 1$ such that 
$$
\forall \, t \geq 0, \quad \|T_n(t)\|_{\BBB(E_j,E_{j+1})} \leq C_{\alpha_0'} e^{-\alpha_0' t}, \quad j=-1,0.
$$
\end{lem}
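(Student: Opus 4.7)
The plan is to invoke Lemma~\ref{lem:Tngeneral} separately for the two pairs $(\EE_{-1}, \EE_0)$ and $(\EE_0, \EE_1)$, and then to unify the resulting iteration indices into a single $n$ that serves both.

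Fix $j \in \{-1, 0\}$ and $\alpha_0' \in (0, \alpha_0)$. I apply Lemma~\ref{lem:Tngeneral} with $a := -\alpha_0$, $a' := -\alpha_0'$, $\EE := \EE_j$ and $E := \EE_{j+1}$, interpolating between them by a chain of intermediate spaces $\widetilde\EE_k := W^{s_k,1}_x W^{\sigma_k,1}_v(\langle v \rangle^{q_k} m)$ for $k = 0, \dots, J$, where $\widetilde\EE_0 = \EE_j$, where each consecutive step gains a half-derivative both in $x$ and in $v$, and where the (finite) number of steps $J$ is dictated by the two-derivative regularity gap between $\EE_j$ and $\EE_{j+1}$. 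Hypothesis (i) of Lemma~\ref{lem:Tngeneral} is immediate: the proof of Lemma~\ref{lem:dissip} in fact establishes hypodissipativity of $\BB_{\lambda,\delta}+\alpha_0$ on every weighted space $W^{s,1}_x W^{k,1}_v(\langle v \rangle^q m)$, and $\AA_\delta$ is bounded on each such space by Lemma~\ref{lem:Adelta}. Hypothesis (ii) is precisely the second estimate of Lemma~\ref{lem:Tn}: taking $\ell_0 = 2$ and $\gamma = 0$, we have $\|T_2(t)\|_{\BBB(\widetilde\EE_k, \widetilde\EE_{k+1})} \leq C \, e^{-\alpha_0' t}$. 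The key structural observation is that after a single application of $T_1$ the output is supported in the compact set $K$ provided by Lemma~\ref{lem:Adelta}, on which any polynomial weight $\langle v \rangle^q$ is uniformly bounded; this absorbs the mismatch between the different polynomial weights appearing in $\EE_{-1}, \EE_0, \EE_1$ and lets me choose the intermediate weights $q_k$ freely.

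Lemma~\ref{lem:Tngeneral} then yields, for each $j \in \{-1, 0\}$, an integer $n_j$ and a constant $C_j$ with $\|T_{n_j}(t)\|_{\BBB(\EE_j, \EE_{j+1})} \leq C_j \, e^{-\alpha_0' t}$. Setting $n := \max(n_{-1}, n_0)$ and writing $T_n = T_{n - n_j} * T_{n_j}$, the extra factor $T_{n - n_j}(s)$ is bounded on $\EE_{j+1}$ with exponential decay rate $-\alpha_0$, once again by hypodissipativity of $\BB_{\lambda,\delta}+\alpha_0$ on $\EE_{j+1}$ together with boundedness of $\AA_\delta$. Applying the elementary convolution-of-semigroups inequality recalled in the preliminaries (the one producing a factor $t^{n-n_j}/(n-n_j)!$ under $e^{\max(\cdots)t}$), I obtain $\|T_n(t)\|_{\BBB(\EE_j, \EE_{j+1})} \leq C \, (1+t^{n-n_j}) \, e^{-\alpha_0' t}$ for both $j$; absorbing the polynomial-in-$t$ factor into $e^{-\alpha_0'' t}$ for an arbitrary $\alpha_0'' \in (0, \alpha_0')$ and relabeling $\alpha_0''$ as $\alpha_0'$ (permissible since the final claim is required for an arbitrary $\alpha_0' \in (0,\alpha_0)$) gives the desired uniform estimate.

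The only potentially delicate point is the compatibility of the polynomial weights across $\EE_{-1}, \EE_0, \EE_1$ when setting up the intermediate chain $\widetilde\EE_k$, but this is entirely resolved by the compact-support property of $T_n h$ for $n \geq 1$. All the genuine analytical content---the regularization of $T_2$ in Sobolev scale and the hypodissipativity of $\BB_{\lambda,\delta}+\alpha_0$ on weighted Sobolev spaces---has already been carried out in Lemmas~\ref{lem:dissip} and~\ref{lem:Tn}, so no further estimate on the collision operator is needed here; the argument is a clean application of the abstract enlargement machinery of Lemma~\ref{lem:Tngeneral}.
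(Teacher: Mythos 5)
Your proposal is correct and follows essentially the same route as the paper, whose proof consists precisely in combining the hypodissipativity of $\BB_{\lambda,\delta}+\alpha_0$ (Lemma~\ref{lem:dissip}), the boundedness of $\AA_\delta$ (Lemma~\ref{lem:Adelta}) and the regularization estimate on $T_2$ (Lemma~\ref{lem:Tn}) to apply the abstract factorization result of Lemma~\ref{lem:Tngeneral}. The extra details you supply — using the compact support of the image of $\AA_\delta$ to neutralize the weight mismatches, and unifying the iteration indices for $j=-1,0$ via the convolution bound — are consistent with (and slightly more explicit than) the paper's one-line argument.
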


\medskip

\subsection{Estimate on $\LL_\lambda - \LL_0$} 
Using estimates from the proof of Lemma~\ref{lem:dissip}, we can prove the following result:
\begin{lem} \label{lem:Llambda-L0}
There exists a function $\eta_1(\lambda)$ such that $\eta_1(\lambda) \xrightarrow[\lambda \rightarrow 0]{} 0$ and the difference $\LL_\lambda - \LL_0$ satisfies 
$$
\|\LL_\lambda - \LL_0\|_{\BBB(E_j,E_{j-1})} \le \eta_1(\lambda), \,\,\, j=0,1.
$$
\end{lem}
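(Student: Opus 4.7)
The plan is to split
$$
\LL_\lambda - \LL_0 = \bigl(\widehat{Q}_{e_\lambda} - \widehat{Q}_1\bigr) + \lambda^\gamma \, \Delta_v
$$
and bound each piece separately in the $\BBB(\EE_j,\EE_{j-1})$-norm for $j = 0,1$. The diffusive contribution is immediate: reading off the exponents defining $\EE_1 \supset \EE_0 \supset \EE_{-1}$, the operator $\Delta_v$ loses two $v$-derivatives and is compatible with a drop of one power of $\langle v \rangle$, hence $\|\lambda^\gamma \Delta_v h\|_{\EE_{j-1}} \le \lambda^\gamma \, \|h\|_{\EE_j}$ with $\lambda^\gamma \to 0$ as $\lambda \to 0$.

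For the collisional piece, I would reuse the splitting already performed inside the proof of Lemma~\ref{lem:dissip}. Exploiting $Q_{e_\lambda}^- = Q_1^-$ and the bilinearity of $Q$,
$$
\widehat{Q}_{e_\lambda}(h) - \widehat{Q}_1(h) = 2\bigl[\widetilde{Q}_{e_\lambda}^+(h,G_\lambda) - \widetilde{Q}_1^+(h,G_\lambda)\bigr] + 2\widetilde{Q}_1^+(h,G_\lambda - G_0) - 2\widetilde{Q}_1^-(h,G_\lambda - G_0).
$$
The two last terms are controlled by~(\ref{eq:epsilon11}) combined with $G_\lambda - G_0 \to 0$ from Lemma~\ref{lem:Glambda-G0}, while the first bracket is controlled by~(\ref{eq:epsilon21}), i.e.\ \cite[Theorem~3.11]{AL3} (or~\cite[Proposition~3.2]{MM} in the constant case), combined with the uniform $W^{\cdot,1}_v$ bound on $G_\lambda$ from Lemma~\ref{lem:Glambdabound}. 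Integrating in $x$ via Fubini is harmless since $\widehat{Q}_{e_\lambda} - \widehat{Q}_1$ acts only in velocity and commutes with $\nabla_x$.

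The main technical point, and the one I expect to require the most care, will be propagating this $o_{\lambda \to 0}(1)$ estimate through the $v$-derivatives built into the norms of $\EE_0$ and $\EE_{-1}$. I would apply the product rule~(\ref{derivQ}) to expand $\partial_v^\alpha\bigl(\widehat{Q}_{e_\lambda} - \widehat{Q}_1\bigr)(h)$ as a sum of terms in which the derivative either falls on $h$, producing an operator of the same form acting on $\partial_v^\alpha h$, or on the background, yielding $\widetilde{Q}_{e_\lambda}^+(h,\partial_v^\alpha G_\lambda) - \widetilde{Q}_1^+(h,\partial_v^\alpha G_\lambda)$ and $\widetilde{Q}_1^\pm(h,\partial_v^\alpha(G_\lambda - G_0))$, each of which is estimated exactly as in the previous paragraph; the higher-order regularity of $G_\lambda$ and $G_\lambda - G_0$ required at each step is granted by the standing assumption $e(\cdot) \in \E_{k^\dagger+1}$ via Lemmas~\ref{lem:Glambdabound}--\ref{lem:Glambda-G0}.

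Collecting these bounds for the weights and orders prescribed by $\EE_j$ for $j = 0,1$, and adding the Laplacian contribution, I would conclude
$$
\|\LL_\lambda - \LL_0\|_{\BBB(\EE_j,\EE_{j-1})} \le \eta_1(\lambda), \quad j = 0,1,
$$
with $\eta_1(\lambda) = o_{\lambda \to 0}(1)$. The argument is essentially a reassembly of estimates produced during the proof of Lemma~\ref{lem:dissip}; no genuinely new ingredient is needed beyond the careful bookkeeping of weights and derivatives.
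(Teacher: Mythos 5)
Your proposal is correct and follows essentially the same route as the paper: the same splitting $\LL_\lambda-\LL_0=\lambda^\gamma\Delta_v+(\widehat{Q}_{e_\lambda}-\widehat{Q}_1)$, the trivial bound $\lambda^\gamma\|h\|_{\EE_j}$ for the diffusive part, and the reuse of the estimates (\ref{eq:epsilon11})--(\ref{eq:epsilon21}) from the proof of Lemma~\ref{lem:dissip} together with the product rule~(\ref{derivQ}) and Lemmas~\ref{lem:Glambdabound}--\ref{lem:Glambda-G0} to propagate the $o_{\lambda\to 0}(1)$ bound through the $v$-derivatives and weights prescribed by $\EE_1\subset\EE_0\subset\EE_{-1}$.
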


\begin{proof}
We have 
$$
\LL_\lambda - \LL_0 = \lambda^\gamma \Delta_v + \widehat{Q}_{e_\lambda} - \widehat{Q}_1.
$$

First, we have the following inequality:
\beqn \label{eq:lapla}
\|�\lambda^\gamma \Delta_v(h)\|_{E_{j-1}} \leq \lambda^\gamma \| h \|_{E_j}, \quad j=0,1.
\eeqn
Concerning the term $\widehat{Q} _{e_\lambda} - \widehat{Q}_1$, we have obtained in the proof of Lemma~\ref{lem:dissip} 
$$
\|(\widehat{Q}_{e_\lambda} - \widehat{Q}_1) h \|_{L^1_v(\langle v \rangle m)} \leq C \, \eps(\lambda) \|h\|_{L^1_v(\langle v \rangle^2 m)}, \quad \eps(\lambda) \xrightarrow[\lambda \rightarrow 0]{} 0.
$$
Again arguing as in the proof of Lemma~\ref{lem:dissip}, we obtain 
$$
\|\partial_v (\widehat{Q}_{e_\lambda} - \widehat{Q}_1) h \|_{L^1_v(\langle v \rangle m)} \leq C \, \eps(\lambda) \|h\|_{W^{1,1}_v(\langle v \rangle^2 m)}.
$$
We obtain the higher-order derivatives in the same way and we can conclude that 
\beqn \label{eq:Qelambda}
\|(\widehat{Q}_{e_\lambda} - \widehat{Q}_1) h \|_{E_0} \leq C \, \eps(\lambda) \|h\|_{E_1}.
\eeqn
Gathering~(\ref{eq:lapla}) and~(\ref{eq:Qelambda}), we deduce that 
$$
\|(\LL_\lambda - \LL_0) h\|_{E_0} \leq \eta_1(\lambda) \|h\|_{E_1}.
$$

Using the same method, we obtain: 
$$
\|(\LL_\lambda - \LL_0) h\|_{E_{-1}} \leq \eta_1(\lambda) \|h\|_{E_0}.
$$
\end{proof}

\smallskip
In the remaining part of the paper, $\delta$ is fixed (given by Lemma~\ref{lem:dissip}), we hence denote $\AA=\AA_\delta$ and $\BB_\lambda=\BB_{\lambda,\delta}$.

\medskip

\subsection{Semigroup spectral analysis of the linearized operator} 
In this section we shall state some results on the geometry of the spectrum of the linearized diffusive inelastic collision operator for a small diffusion parameter as well as a stability estimate for the semigroup associated to $\LL_\lambda$ in various weighted Sobolev spaces.

\begin{prop} \label{prop:linear}
There exists $\lambda_0 \in [0,1)$ such that for any $\lambda \in (0,\lambda_0]$, $\LL_\lambda$ satisfies the following properties in $E =W_x^{s,1} W^{2,1}_v (\langle v \rangle m)$, $s \in \N^*$:
\begin{itemize}
\item[{\bf (i)}] There exists $\mu_\lambda \in \R$ such that $ \Sigma(\LL_\lambda) \cap \Delta_{-\alpha_1}  = \{ \mu_\lambda, 0 \} $ where $\alpha_1$ is given by Theorem~\ref{theo:elastic}. Moreover, $0$ is a four-dimensional eigenvalue and $\mu_\lambda$ is a one-dimensional eigenvalue.
\item[{\bf (ii)}] $\mu_\lambda$ satisfies the following estimate 
\beqn \label{eq:mulambda}
\mu_\lambda = -C \lambda^\gamma + o(\lambda^\gamma)
\eeqn 
for some $C>0$.
\item[{\bf (iii)}] For any $\alpha \in (0, \hbox{\rm min} (\alpha_0, \alpha_1)) \setminus \{-\mu_\lambda \}$ (where $\alpha_0$ is provided by Lemma~\ref{lem:dissip}), the semigroup generated by $\LL_\lambda$ has the following decay property 
\beqn \label{eq:SGLlambda}
\forall \, t \geq 0, \quad \|S_{\LL_\lambda}(t)(I -  \Pi_{\LL_\lambda,0} -  \Pi_{\LL_\lambda,\mu_\lambda}) \|_{\BBB(E)} \leq C e^{-\alpha t}
\eeqn
for some $C>0$. 
\end{itemize}
\end{prop}

\noindent {\it Proof of Proposition~\ref{prop:linear}.}
\subsubsection*{Step 1 of the proof: the linearized elastic operator}
We recall hypodissipativity results for the semigroup associated to the linearized elastic Boltzmann equation which are straightforward adaptations of~\cite[Theorem~4.2]{GMM}. \begin{theo} \label{theo:elastic}
There are constructive constants $C \ge 1$, $\alpha_1>0$, such that the operator $\LL_0$ satisfies in $E_0$ and $E_1$:
$$
\Sigma(\LL_0) \cap \Delta_{-\alpha_1} = \{0\} \quad \text{and} \quad N(\LL_0) = Span \{ G_0, v_1G_0, v_2G_0, v_3G_0, |v|^2 G_0\}.
$$
Moreover, $\LL_0$ is the generator of a strongly continuous semigroup $h(t) = S_{\LL_0}(t) h_{in}$ in $E_0$ and $E_1$, solution to the initial value problem (\ref{eq:Bol2}) with $\lambda=0$, which satisfies:
$$
\forall \, t \ge 0, \quad \| h(t) - \Pi_{\LL_0, 0} h_{in} \|_{E_i} \le C e^{-\alpha_1 t }  \| h_{in} - \Pi_{\LL_0, 0} h_{in} \|_{E_i}, \quad i=0,1.
$$
\end{theo}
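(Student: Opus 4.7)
The plan is to obtain the statement by an enlargement argument: transfer the classical spectral gap for $\LL_0$, known in the small Hilbert space $\HH := H^s_{x,v}(\mu^{-1/2})$, to the larger space $\EE_0$ with stretched exponential weight, using the abstract machinery from \cite{GMM} that underlies Lemma~\ref{lem:Tngeneral}. The ingredients needed to feed into this machinery have essentially all been collected in the previous subsections, so the bulk of the work will be to check the hypotheses and then identify the null space.

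First I would recall the classical facts in $\HH$: by the work of Grad, Carleman and more recently \cite{BM,MouNeu}, the elastic linearized operator $\LL_0 = \widehat{Q}_1 - v \cdot \nabla_x$ generates a strongly continuous semigroup on $\HH$, has a constructive spectral gap $\alpha_\HH > 0$, and
\[
N(\LL_0)_{\HH} = \mathrm{Span}\{G_0, v_1 G_0, v_2 G_0, v_3 G_0, |v|^2 G_0\},
\]
corresponding to conservation of mass, momentum, and energy, with an associated orthogonal spectral projector $\Pi_0^\HH$. Denote $\alpha = \min(\alpha_\HH, \alpha_0)$ (with $\alpha_0$ from Lemma~\ref{lem:dissip} applied at $\lambda=0$); we will prove the statement with this $\alpha$, up to taking a slightly smaller value.

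Next I would invoke the splitting $\LL_0 = \AA + \BB_0$ introduced in Subsection~\ref{subsection:linearization}. Lemma~\ref{lem:dissip} at $\lambda = 0$ gives that $\BB_0 + \alpha_0$ is hypodissipative in $\EE_j$, $j=-1,0,1$, so in particular in $\EE_0$. Lemma~\ref{lem:Adelta} gives boundedness and smoothing of $\AA$ into compactly supported smooth functions. Combining these with Lemma~\ref{lem:Tnfinal} (which still applies verbatim at $\lambda=0$), the iterated convolution $(\AA S_{\BB_0})^{(*n)}$ is, for $n$ large enough, a bounded operator from $\EE_0$ into a compactly supported $H^k_v$ space, in particular into $\HH$, with norm $\le C_{\alpha'} e^{-\alpha' t}$ for any $\alpha' < \alpha_0$. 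These three facts are exactly the hypotheses of the enlargement theorem of \cite{GMM}, which then yields that $\Sigma(\LL_0) \cap \Delta_{-\alpha} = \Sigma(\LL_0|_{\HH}) \cap \Delta_{-\alpha}$ (both equal to $\{0\}$), that the discrete eigenvalue $0$ has the same algebraic eigenspace in $\EE_0$ as in $\HH$, and that the resolvent estimates and semigroup decay transfer to $\EE_0$ via the Duhamel-type identity
\[
S_{\LL_0}(t) = \sum_{\ell = 0}^{n-1} (S_{\BB_0} \ast (\AA S_{\BB_0})^{(*\ell)})(t) + (S_{\LL_0} \ast (\AA S_{\BB_0})^{(*n)})(t),
\]
where the first sum decays like $e^{-\alpha_0 t}$ by Lemma~\ref{lem:dissip} and the last term, after projecting off $0$, decays like $e^{-\alpha' t}$ by the spectral gap in $\HH$ together with $\EE_0 \hookrightarrow \EE_0$ and $(\AA S_{\BB_0})^{(*n)} : \EE_0 \to \HH$.

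The only genuinely non-routine point will be identifying $N(\LL_0)$ \emph{in $\EE_0$}: a priori $\EE_0$ is much larger than $\HH$ and could conceivably host stationary solutions not in $\HH$. The quickest way around this is to note that any $h \in N(\LL_0) \cap \EE_0$ lies in the range of the spectral projector $\Pi_{\LL_0, 0}$ defined by the contour integral in Subsection~2.1; by the enlargement theorem this range coincides with the one in $\HH$, so $h \in \mathrm{Span}\{G_0, v_i G_0, |v|^2 G_0\}$. Finally, the decay estimate in $\EE_0$ for $h(t) - \Pi_{\LL_0,0} h_{in}$ follows from the semigroup decomposition above applied to $h_{in} - \Pi_{\LL_0,0} h_{in}$, which belongs to the complementary invariant subspace on which $\LL_0$ has spectrum contained in $\Delta_{-\alpha}^c$. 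I expect the main obstacle to be bookkeeping in the enlargement argument, in particular verifying that the $\EE_j$-scale and the $n$-fold convolution estimate of Lemma~\ref{lem:Tnfinal} are compatible with the Hilbert scale ending in $\HH$; once that is set up, the rest is a direct application of \cite{GMM}.
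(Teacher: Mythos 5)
The paper does not prove this theorem at all: it is quoted verbatim from \cite{GMM} (their Theorem~4.2), and your sketch is a faithful reconstruction of the enlargement argument used there, based on the same splitting $\LL_0 = \AA + \BB_0$, the hypodissipativity of $\BB_0+\alpha_0$ in the $\EE_j$ scale, the regularization of $\AA$, and the iterated-convolution estimate mapping $\EE_0$ into the small Hilbert space $H^s_{x,v}(\mu^{-1/2})$ where the spectral gap is classical. So this is essentially the same approach the paper relies on; the only slips are cosmetic (the embedding needed to close the Duhamel remainder is $H^s_{x,v}(\mu^{-1/2}) \hookrightarrow \EE_0$, not ``$\EE_0 \hookrightarrow \EE_0$'', and Lemma~\ref{lem:Tnfinal} is stated for $\lambda \in (0,\lambda_0)$ though its proof indeed covers $\lambda=0$).
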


\smallskip
\subsubsection*{Step 2 of the proof: localization of spectrum of $\LL_\lambda$}

\begin{lem} 
Let us define $K_\lambda(z)$ for any $z \in \Omega := \Delta_{-\alpha_1} \setminus  \{0\} $ (where $\alpha_1$ is given by Theorem~\ref{theo:elastic}) by 
$$
K_\lambda(z) = (-1)^n \, (\LL_\lambda-\LL_0) \, \RR_{\LL_0}(z) \, \, (\AA \, \RR_{\BB_\lambda}(z))^n.
$$
 Then, there exists $\eta_2(\lambda)$ with $\eta_2(\lambda) \xrightarrow[\lambda \rightarrow 0]{} 0$ such that 
$$
  \forall \, z \in \Omega_\lambda:=\Delta_{-\alpha_1} \setminus \bar B(0, \eta_2(\lambda)), \quad \|K_\lambda(z)\|_{\BBB(E_0)} \leq \eta_2(\lambda).
$$
Moreover, there exists $\lambda_0 \in (0,\lambda_1]$ (where $\lambda_1$ is given by Lemma~\ref{lem:dissip}) such that  for any $\lambda \in [0,\lambda_0]$, we have
\begin{itemize}
\item[{\bf (i)}] $I+K_\lambda(z)$ is invertible for any $z \in \Omega_\lambda$
\item[{\bf (ii)}]  $\LL_\lambda - z$ is also invertible for any $z \in \Omega_\lambda$ and 
$$
\forall \, z \in \Omega_\lambda, \quad \RR_{\LL_\lambda}(z) = \UU_\lambda(z) \, (I+K_\lambda(z))^{-1} 
$$
where
$$
\UU_\lambda (z)  = \RR_{\BB_\lambda } (z) + ... +  (-1)^{n-1}  \, \RR_{\BB_\lambda} (z)\,  (\AA \, \RR_{\BB_\lambda}(z))^{n-1}  + (-1)^n  \, \RR_{\LL_0} \, (\AA \, \RR_{\BB_\lambda}(z))^n.
$$  
\end{itemize}
We thus deduce that 
$$
\Sigma(\LL_\lambda) \cap \Delta_{-\alpha_1} \subset B(0, \eta_2(\lambda)).
$$
\end{lem}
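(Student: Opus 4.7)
The plan is to build the resolvent of $\LL_\lambda$ via an iterated Duhamel expansion and then invert a small perturbative remainder by a Neumann series. The guiding algebraic identity is
\begin{equation*}
(\LL_\lambda - z)\,\UU_\lambda(z) = I + K_\lambda(z),
\end{equation*}
which I verify directly. Using $\LL_\lambda - z = (\BB_\lambda - z) + \AA$, one has $(\LL_\lambda - z)\RR_{\BB_\lambda}(z) = I + \AA \RR_{\BB_\lambda}(z)$, so the first $n$ terms of $\UU_\lambda(z)$ telescope under the action of $(\LL_\lambda - z)$ to $I - (-1)^n(\AA \RR_{\BB_\lambda}(z))^n$. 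Using $\LL_\lambda - z = (\LL_0 - z) + (\LL_\lambda - \LL_0)$, one has $(\LL_\lambda - z)\RR_{\LL_0}(z) = I + (\LL_\lambda - \LL_0)\RR_{\LL_0}(z)$, so the last term of $\UU_\lambda(z)$ contributes $(-1)^n(\AA \RR_{\BB_\lambda}(z))^n + K_\lambda(z)$; the two copies of $(-1)^n(\AA \RR_{\BB_\lambda}(z))^n$ cancel.

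Next, I would bound $K_\lambda(z)$ in $\BBB(\EE_0)$ by composing three factors from right to left. By the Laplace representation $(\AA \RR_{\BB_\lambda}(z))^n = (-1)^n\int_0^\infty e^{-zt} T_n(t)\,dt$ valid for $\Re e\, z > -\alpha_0'$, together with Lemma~\ref{lem:Tnfinal}, the iterated operator $(\AA \RR_{\BB_\lambda}(z))^n$ is bounded from $\EE_0$ to $\EE_1$ uniformly in $z \in \Delta_{-\alpha}$ (working with $\min(\alpha, \alpha_0')$ if necessary). Theorem~\ref{theo:elastic} shows that $0$ is the only point of $\Sigma(\LL_0)$ in $\Delta_{-\alpha}$, and the exponential decay of the semigroup on the range of $I - \Pi_{\LL_0,0}$ forces $0$ to be a semisimple eigenvalue, so $\RR_{\LL_0}(z)$ has at worst a simple pole at $z = 0$ and $\|\RR_{\LL_0}(z)\|_{\BBB(\EE_1)} \leq C(1 + 1/|z|)$ on $\Delta_{-\alpha}\setminus\{0\}$. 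Finally, Lemma~\ref{lem:Llambda-L0} gives $\|\LL_\lambda - \LL_0\|_{\BBB(\EE_1,\EE_0)} \leq \eta_1(\lambda)$. Composing,
\begin{equation*}
\|K_\lambda(z)\|_{\BBB(\EE_0)} \leq C\,\eta_1(\lambda)\,(1 + 1/|z|), \quad z \in \Delta_{-\alpha}\setminus\{0\}.
\end{equation*}

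I then set $\eta_2(\lambda) := \sqrt{C'\eta_1(\lambda)}$ with $C'$ large enough that $\|K_\lambda(z)\|_{\BBB(\EE_0)} \leq \eta_2(\lambda)$ on $\Omega_\lambda$, and clearly $\eta_2(\lambda) \to 0$. Picking $\lambda'$ small enough that $\eta_2(\lambda) < 1$ for all $\lambda \in [0,\lambda']$, the Neumann series inverts $I + K_\lambda(z)$ on $\Omega_\lambda$, and the algebraic identity produces $\UU_\lambda(z)(I + K_\lambda(z))^{-1}$ as a bounded right inverse of $\LL_\lambda - z$. Since $\LL_\lambda = \AA + \BB_\lambda$ generates a $C_0$-semigroup on $\EE_0$ (by Lemma~\ref{lem:dissip} and the boundedness of $\AA$ from Lemma~\ref{lem:Adelta}), a right half-plane $\Delta_c$ with $c$ large is contained in $\rho(\LL_\lambda)$; on $\Omega_\lambda \cap \Delta_c$, uniqueness of the resolvent forces $\RR_{\LL_\lambda}(z) = \UU_\lambda(z)(I + K_\lambda(z))^{-1}$, and analyticity of the right-hand side on all of $\Omega_\lambda$ extends the resolvent by analytic continuation, giving $\Omega_\lambda \subset \rho(\LL_\lambda)$ and the stated formula, whence $\Sigma(\LL_\lambda) \cap \Delta_{-\alpha} \subset \bar B(0, \eta_2(\lambda))$.

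The main obstacle lies in the interplay between the simple pole of $\RR_{\LL_0}$ at $z = 0$ and the smallness of $\LL_\lambda - \LL_0$: the $1/|z|$ blow-up can only be absorbed into the factor $\eta_1(\lambda)$ at the cost of excising a disk of radius $\sim \sqrt{\eta_1(\lambda)}$ around $0$. Consequently, this argument cannot exclude spectrum inside $\bar B(0, \eta_2(\lambda))$, which is precisely why a separate fine spectral analysis near $0$ must be carried out in the next step of the proof of Theorem~\ref{theo:linearmainresult} to produce the eigenvalue $\mu_\lambda$.
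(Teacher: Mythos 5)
Your proposal follows essentially the paper's own route: the same algebraic identity $(\LL_\lambda-z)\,\UU_\lambda(z)=I+K_\lambda(z)$, the same three-factor estimate of $K_\lambda(z)$ through $\EE_0\to\EE_1\to\EE_1\to\EE_0$ using Lemma~\ref{lem:Tnfinal} and Lemma~\ref{lem:Llambda-L0}, and the same Neumann-series inversion of $I+K_\lambda(z)$ after excising a ball whose radius balances the blow-up of $\RR_{\LL_0}$ at $0$ against $\eta_1(\lambda)$. One cosmetic difference: the paper does not assume the pole of $\RR_{\LL_0}$ at $0$ is simple; it invokes the Laurent expansion in $\EE_1$ (pole of some finite order $n$) and takes $\eta_2(\lambda)=(C\,\eta_1(\lambda))^{1/(n+1)}$, whereas you argue semisimplicity from the uniform boundedness of the elastic semigroup and get $\eta_2\sim\sqrt{\eta_1}$. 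That is acceptable, but note that the bound you need is in $\BBB(\EE_1)$, so you must use that Theorem~\ref{theo:elastic} (the GMM result) holds in $\EE_1$ as well as in $\EE_0$ --- the paper relies on the same fact implicitly when it writes the Laurent series in $\EE_1$.

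The one step that is too quick as written is the final one: ``analyticity of the right-hand side on all of $\Omega_\lambda$ extends the resolvent by analytic continuation, giving $\Omega_\lambda\subset\rho(\LL_\lambda)$'' is not a valid deduction by itself. A holomorphic extension of the resolvent beyond the resolvent set need not consist of resolvent values, and a right inverse alone does not give injectivity of $\LL_\lambda-z$; this is exactly why the paper devotes its Steps~3--5 to a continuation argument rather than stopping at the right-inverse identity. The repair uses only ingredients you already have: (a) $F(z):=\UU_\lambda(z)(I+K_\lambda(z))^{-1}$ is a right inverse of $\LL_\lambda-z$ at \emph{every} $z\in\Omega_\lambda$, hence coincides with $\RR_{\LL_\lambda}(z)$ at every $z\in\Omega_\lambda\cap\rho(\LL_\lambda)$; (b) $F$ is locally bounded on $\Omega_\lambda$, so along any sequence $z_k\in\Omega_\lambda\cap\rho(\LL_\lambda)$ converging in $\Omega_\lambda$ the resolvent norms stay bounded and the limit point remains in $\rho(\LL_\lambda)$ (the distance to the spectrum is bounded below by $1/\|\RR_{\LL_\lambda}(z_k)\|$); (c) $\Omega_\lambda$ is connected and $\Omega_\lambda\cap\rho(\LL_\lambda)\neq\emptyset$ by your semigroup/half-plane anchor (or the paper's factorization $\LL_\lambda-z_0=(I+\AA\RR_{\BB_\lambda}(z_0))(\BB_\lambda-z_0)$ from Lemma~\ref{lem:dissip} and Lemma~\ref{lem:Adelta}). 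An open--closed argument then gives $\Omega_\lambda\subset\rho(\LL_\lambda)$ together with $\RR_{\LL_\lambda}=F$ there; this is precisely the paper's path-continuation with a uniform bound on $\|F\|$ along the path. With that step spelled out, your proof is complete and coincides in substance with the paper's.
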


\begin{proof}
\medskip\noindent
{\it Step~1.}
We first notice that $(\AA \RR_{\BB_\lambda}(z))^n \in \BBB(E_0,E_1)$, $\RR_{\LL_{0}}(z) \in \BBB(E_1)$ and \linebreak $\LL_{\lambda} - \LL_{0} \in \BBB(E_1,E_0)$ for any $z \in \Omega$ because of Lemma~\ref{lem:Tnfinal}, Theorem~\ref{theo:elastic} and Lemma~\ref{lem:Llambda-L0}. Moreover, there exist $n \in \N$ and $C_0 >0$ such that $\|\RR_{\LL_{0}} (z) \|_{\BBB(E_1)} \le  C_0/|z|^{n}$ for any~$z$~in~$\Omega$. Indeed, we know from~\cite[paragraph I.5.3]{Kato} that in $E_1$, the following Laurent series
$$
\RR_{\LL_{0}} (z) = \sum_{k=-n}^{+\infty} \, z^k \, \CC_k
$$
where $\CC_k$ are some bounded operators in $\BBB(E_1)$, converges for $z$ close to $0$. We thus deduce the previous estimate on $\|\RR_{\LL_{0}} (z) \|_{\BBB(E_1)}$. Let us finally define $\eta_2(\lambda):= \left( C_0 C_{\lambda_1'} \eta_1(\lambda) \right)^{\frac{1}{n+1}}$ where $\lambda_1'$ is fixed in $(0, \lambda_1)$ and $C_{\lambda_1'}$ is given by Lemma~\ref{lem:Tnfinal}. We deduce that 
$$
\forall \, z \in \Omega_\lambda, \quad \|K_\lambda(z) \|_{\BBB(E_0)} \le \eta_1(\lambda) \frac{C_0}{\eta_2(\lambda)^{n}} C_{\lambda_1'} = \eta_2(\lambda).
$$
We then choose $\lambda_0 \in (0,\lambda_1]$ such that for any $\lambda \in (0,\lambda_0]$, $\eta_2(\lambda) < 1$. We hence obtain that $I+K_\lambda(z)$ is an invertible operator for any $\lambda \in (0,\lambda_0]$. Let us now consider $\lambda \in (0,\lambda_0]$. 

\medskip\noindent
{\it Step~2. $\UU_\lambda (z) \,  (I + K_\lambda(z))^{-1}$ is a right-inverse of $\LL_\lambda-z$ on $\Omega_\lambda$. } 
For any $z \in \Omega_\lambda$, we compute 
\bean
(\LL_\lambda - z) \, \UU_\lambda(z) 
&=&(\BB_\lambda - z + \AA) \, \{  \RR_{\BB_\lambda }(z)  + ... +  (-1)^{n-1}  \, \RR_{\BB_\lambda}(z) \,  (\AA \, \RR_{\BB_\lambda})^{n-1}(z)  \}
\\
&+&\,\, (-1)^n  \,   (\LL_\lambda - \LL_0 + \LL_0 - z) \,  \RR_{\LL_0}(z) \, (\AA \, \RR_{\BB_\lambda})^n (z) 
\\
&=& Id   +  K_\lambda(z).
\eean
Because of the previous step, we deduce that for $z \in \Omega_\lambda$, $\UU_\lambda (z) \,  (I + K_\lambda(z))^{-1}$ is a right-inverse of $\LL_\lambda-z$.

\medskip\noindent
{\it Step~3. There exists $z_0 \in \Omega_\lambda$ such that $\LL_\lambda-z_0$ is invertible on $\Omega_\lambda$. } 
Indeed, we write 
$$ 
\LL_\lambda-z_0 = (\AA \, \RR_{\BB_\lambda}(z_0) + I) \, (\BB_\lambda - z_0)
$$
where $(\AA \, \RR_{\BB_\lambda}(z_0) + I)$ is invertible for $\Re e \, z_0$ large enough because of Lemma~\ref{lem:dissip}. As a consequence, $\LL_\lambda-z_0$ is the product of two invertible operators, we hence obtain that  $\LL_\lambda-z_0$ is invertible. 

\medskip\noindent
{\it Step~4. $\LL_\lambda-z$ is invertible close to $z_0$.}
Since $\LL_\lambda-z_0$ is invertible on $\Omega_\lambda$, we have $\RR_{\LL_\lambda} (z_0) = \UU_\lambda (z_0) \,  (I + K_\lambda(z_0))^{-1}$. Moreover, if $\| \RR_{\LL_\lambda} (z_0) \| \le C$ for some $C>0$, then $\LL_\lambda - z$ in invertible on the disc $B(z_0, 1/C)$ with 
\beqn \label{eq:devResolvante}
\forall \, z \in B(z_0, 1/C), \quad \RR_{\LL_\lambda} (z) =  \RR_{\LL_\lambda} (z_0) \, \sum_{n=0}^{+\infty} \, (z-z_0)^n \, \RR_{\LL_\lambda} (z_0)^n ,
\eeqn
and arguing as before, $\RR_{\LL_\lambda} (z) = \UU_\lambda (z) \,  (I + K_\lambda(z))^{-1}$ on  $B(z_0, 1/C)$ since \linebreak $\UU_\lambda (z) \,  (I + K_\lambda(z))^{-1}$ is a right inverse of $\LL_\lambda - z$ for any $z \in \Omega_\lambda$. 

\medskip\noindent
{\it Step~5. $\LL_\lambda-z$ is invertible on $\Omega_\lambda$.}
For a given $z_1 \in \Omega_\lambda$, we consider a continuous path $\Gamma$ from $z_0$ to $z_1$ included in $\Omega_\lambda$, i.e. a continuous function $\Gamma : [0,1] \to \Omega_\lambda$ such that $\Gamma(0) = z_0$, $\Gamma(1) = z_1$.
We know that $(\AA \RR_{\BB_{\lambda}}(z))^\ell$, $1 \le \ell \le n-1$, ${\RR_{\LL_{0}} (z) } (\AA \RR_{\BB_{\lambda} }(z))^n$ and $(I+K_\lambda(z))^{-1}$ are locally uniformly bounded in $\mathscr{B}(E_0)$ on $\Omega_\lambda$, which implies
$$
\sup_{z \in \Gamma([0,1])} \| \UU_\lambda(z) (I+K_\lambda(z))^{-1} \|_{ \mathscr{B}(E_0)} := K <
\infty.
$$
Since $(\LL_\lambda - z_0)$ is invertible we deduce that $(\LL_\lambda - z)$ is invertible with $\RR_{\LL_\lambda}(z)$ locally bounded around $z_0$ with a bound $K$ which is uniform along $\Gamma$ (and a similar series expansion as in \eqref{eq:devResolvante}). By a continuation argument we hence obtain that $(\LL_\lambda-z)$ is invertible in $E_0$ all along the path $\Gamma$ with
\[
\RR_{\LL_\lambda}(z) = \UU_\lambda(z) (I+K_\lambda(z))^{-1} \ \mbox{ and } \ \| \RR_{\LL_\lambda}(z) \|_{\mathscr{B}(E_0)} \le K.
\]
Hence we conclude that $(\LL_\lambda-z_1)$ is invertible with $\RR_{\LL_\lambda}(z_1) =\UU_\lambda(z_1) (I+K_\lambda(z_1))^{-1}$.
\end{proof}
 
\smallskip
\subsubsection*{Step 3 of the proof: dimension of eigenspaces}
\begin{lem} \label{lem:projector} 
There exist a constant $C>0$ and a function $\eta_3(\lambda)$ such that
\beqn\label{eq:lem3-1}
\|\Pi_{\LL_\lambda,-\alpha_1} \|_{\BBB(E_0,E_1)} \le C,
\eeqn
and 
\beqn\label{eq:lem3-2}
\| \Pi_{\LL_\lambda,-\alpha_1}  -\Pi_{\LL_0,-\alpha_1}  \|_{\BBB(E_0)} \le \eta_3(\lambda), \quad \eta_3(\lambda) \xrightarrow[\lambda \rightarrow 0]{} 0.
\eeqn
It implies that for $\lambda$ close enough to $0$, we have 
$$
\hbox{\rm dim} \, \mbox{\rm R}(\Pi_{\LL_\lambda,-\alpha_1} ) = \hbox{\rm dim} \, \mbox{\rm R}(\Pi_{\LL_0,-\alpha_1} )=5.
$$
\end{lem}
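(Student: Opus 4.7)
The plan is to use the factorized resolvent formula from the previous lemma together with Cauchy's contour formula for the spectral projector. First I would fix a radius $r > 0$ with $r < \min(\alpha, \alpha_0)$ and, taking $\lambda$ small enough that $\eta_2(\lambda) < r$, work with the circle $\Gamma := \{|z| = r\}$, which lies in $\Omega_\lambda$ and encloses every point of $\Sigma(\LL_\lambda) \cap \Delta_{-\alpha}$. Iterating the resolvent identity $\RR_{\LL_\lambda}(z) = \RR_{\BB_\lambda}(z) - \RR_{\LL_\lambda}(z)\,\AA\,\RR_{\BB_\lambda}(z)$ produces
\[
\RR_{\LL_\lambda}(z) = \sum_{k=0}^{n-1}(-1)^k \RR_{\BB_\lambda}(z) (\AA \RR_{\BB_\lambda}(z))^k + (-1)^n \RR_{\LL_\lambda}(z)(\AA \RR_{\BB_\lambda}(z))^n,
\]
and since $\BB_\lambda + \alpha_0$ is hypodissipative on every $\EE_j$ (Lemma~\ref{lem:dissip}), the first $n$ terms are holomorphic inside $\Gamma$ and contribute nothing to the contour integral, leaving the working formula
\[
\Pi_{\LL_\lambda,-\alpha} = \frac{(-1)^{n+1}}{2i\pi} \oint_\Gamma \RR_{\LL_\lambda}(z) (\AA \RR_{\BB_\lambda}(z))^n \, dz,
\]
together with the analogous identity obtained by setting $\lambda = 0$.

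To prove~\eqref{eq:lem3-1} I would observe that $(\AA\RR_{\BB_\lambda}(z))^n$ is exactly the Laplace transform of $T_n(t) = (\AA S_{\BB_\lambda})^{(*n)}(t)$, so by Lemma~\ref{lem:Tnfinal} this family belongs to $\BBB(\EE_0, \EE_1)$ uniformly in $z \in \Gamma$ and in $\lambda$. It then remains to bound $\RR_{\LL_\lambda}(z)$ on $\EE_1$ along $\Gamma$, which I would obtain by rerunning the factorization-plus-invertibility argument of the preceding lemma at the level of $\EE_1$ rather than $\EE_0$. This parallel run is legitimate since Lemmas~\ref{lem:dissip},~\ref{lem:Tnfinal} and~\ref{lem:Llambda-L0} all apply simultaneously on $\EE_j$ for $j = -1, 0, 1$, and $G_\lambda$ is uniformly bounded in $\EE_1$ by Lemma~\ref{lem:Glambdabound}. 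Composing these two uniform estimates and integrating over the compact contour yields~\eqref{eq:lem3-1}.

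For~\eqref{eq:lem3-2} I would split the difference as
\begin{align*}
\Pi_{\LL_\lambda,-\alpha} - \Pi_{\LL_0,-\alpha}
&= \frac{(-1)^{n+1}}{2i\pi} \oint_\Gamma \bigl[\RR_{\LL_\lambda}(z) - \RR_{\LL_0}(z)\bigr] (\AA\RR_{\BB_\lambda}(z))^n \, dz \\
&\quad + \frac{(-1)^{n+1}}{2i\pi} \oint_\Gamma \RR_{\LL_0}(z) \bigl[(\AA\RR_{\BB_\lambda}(z))^n - (\AA\RR_{\BB_0}(z))^n\bigr] \, dz.
\end{align*}
Since $\BB_\lambda - \BB_0 = \LL_\lambda - \LL_0 = \lambda^\gamma \Delta_v + \widehat{Q}_{e_\lambda} - \widehat{Q}_1$ has $\BBB(\EE_1, \EE_0)$-norm bounded by $\eta_1(\lambda) \to 0$ (Lemma~\ref{lem:Llambda-L0}), applying the second resolvent identity to $\RR_{\LL_\lambda} - \RR_{\LL_0}$ in the first integrand, and telescoping $A^n - B^n = \sum_k A^k (A - B) B^{n-1-k}$ with $A = \AA\RR_{\BB_\lambda}$ and $B = \AA\RR_{\BB_0}$ in the second, each produces the required factor $\eta_1(\lambda)$; the regularizing operator $(\AA\RR_\BB)^n$ provides the bridge from $\EE_0$ to $\EE_1$ needed to let the small difference $\LL_\lambda - \LL_0$ act. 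One then reads off the bound $\eta_3(\lambda) = C\,\eta_1(\lambda) \to 0$. Finally, once $\eta_3(\lambda) < 1$, the classical stability of the rank of a bounded projector under small norm perturbations (see~\cite[Paragraph~I-4.6]{Kato}) combined with $\dim \mathrm{R}(\Pi_{\LL_0,-\alpha}) = \dim N(\LL_0) = 5$ from Theorem~\ref{theo:elastic} yields the dimension equality. The main technical obstacle I anticipate is~\eqref{eq:lem3-1}: producing a constant independent of $\lambda$ forces one to carry the whole spectral-localization machinery (factorization of $\RR_{\LL_\lambda}$, invertibility and smallness of $I + K_\lambda$) through on the smaller space $\EE_1$ in parallel with $\EE_0$.
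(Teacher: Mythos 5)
Your overall strategy (contour representation of the projector built on the factorization $\LL_\lambda=\AA+\BB_\lambda$, regularization by $(\AA\RR_{\BB_\lambda})^n$, smallness of $\LL_\lambda-\LL_0$, then Kato's stability of the range of a projector) is the same as the paper's, and your first splitting term for~\eqref{eq:lem3-2} is fine. But there are two genuine gaps. First, for~\eqref{eq:lem3-1} your representation $\Pi_{\LL_\lambda,-\alpha}=\frac{(-1)^{n+1}}{2i\pi}\oint_\Gamma \RR_{\LL_\lambda}(z)(\AA\RR_{\BB_\lambda}(z))^n\,dz$ places $\RR_{\LL_\lambda}(z)$ \emph{after} the regularizing block, so you need a uniform bound on $\RR_{\LL_\lambda}(z)$ in $\BBB(\EE_1)$, which the preceding lemma does not give. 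Your proposed ``parallel run'' on $\EE_1$ does not close with the lemmas as stated: the localization argument at level $\EE_j$ uses regularization $\EE_j\to\EE_{j+1}$ (Lemma~\ref{lem:Tnfinal} only covers $j=-1,0$) and smallness of $\LL_\lambda-\LL_0$ from the rung \emph{above} (Lemma~\ref{lem:Llambda-L0} only covers $j=0,1$), so running it on $\EE_1$ forces you to introduce a fourth space $\EE_2$ and, in the non-constant restitution case, more regularity on $e(\cdot)$ than the standing assumption $\E_{k^\dagger+1}$ provides. The paper avoids bounding $\RR_{\LL_\lambda}$ on $\EE_1$ altogether: it inserts the factorization $\RR_{\LL_\lambda}=\UU_\lambda(I+K_\lambda)^{-1}$ (with $N=2n$ iterates) inside the contour integral so that every operator is only estimated on $\EE_0$ and a block $(\AA\RR_{\BB_\lambda})^n$ applied last yields the $\EE_0\to\EE_1$ gain. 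Within your own scheme the cheap fix is to iterate the \emph{other} resolvent identity, $\RR_{\LL_\lambda}=\RR_{\BB_\lambda}-\RR_{\BB_\lambda}\AA\RR_{\LL_\lambda}$, which gives $\Pi_{\LL_\lambda,-\alpha}=\pm\frac{1}{2i\pi}\oint_\Gamma(\RR_{\BB_\lambda}\AA)^{n+1}\RR_{\LL_\lambda}(z)\,dz$: the regularizing block then acts last and only the $\BBB(\EE_0)$ bound on $\RR_{\LL_\lambda}$ is needed.

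Second, in~\eqref{eq:lem3-2} the telescoping $A^n-B^n=\sum_k A^k(A-B)B^{n-1-k}$ with only $n$ factors does not produce the factor $\eta_1(\lambda)$. The difference $A-B=-\AA\,\RR_{\BB_\lambda}(\BB_\lambda-\BB_0)\RR_{\BB_0}$ is small only through $\BB_\lambda-\BB_0=\LL_\lambda-\LL_0$, which is bounded (and small) solely as an operator losing one rung of the ladder ($\EE_1\to\EE_0$ or $\EE_0\to\EE_{-1}$); to convert this into a small $\BBB(\EE_0)$ bound you need a full block of $n$ factors of $\AA\RR_{\BB}$ entirely on one side of the small factor to climb the rung back. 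With only $n$ factors in total, the middle terms of the telescoping ($k<n-1$ and $n-1-k<n$) have too few factors on both sides, and the estimate cannot be closed. This is exactly why the paper works with $N=2n$ iterates of the factorization: then for every $k$ either $k\ge n$ or $2n-k-1\ge n$, and Lemma~\ref{lem:Tnfinal} applies on one side. The final Kato argument and the identification $\dim\mbox{R}(\Pi_{\LL_0,-\alpha})=5$ are fine once the two estimates are repaired.
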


The following lemma from~\cite[paragraph I.4.6]{Kato} is going to be useful for the proof.

\begin{lem} \label{lem:kato}
Let $X$ be a Banach space and $P$, $Q$ be two projectors in $\mathscr{B}(X)$ such that $\|P-Q \|_{\mathscr{B}(X)} < 1$. Then the ranges of $P$ and $Q$ are isomorphic. In particular, \linebreak $\hbox{\rm dim} (\hbox{\rm R}(P))= \hbox{\rm dim} (\hbox{\rm R}(Q))$.
\end{lem}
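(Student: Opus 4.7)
The plan is to construct a bounded invertible operator $U \in \mathscr{B}(X)$ that intertwines $P$ and $Q$ in the sense that $UP = QU$. Once such a $U$ is in hand, its restriction to $R(P)$ automatically furnishes the desired Banach space isomorphism onto $R(Q)$: the intertwining relation $QU = UP$ forces $U(R(P)) \subseteq R(Q)$, while conjugating yields $U^{-1}Q = PU^{-1}$, whence $U^{-1}(R(Q)) \subseteq R(P)$. Thus $U|_{R(P)}$ is bijective onto $R(Q)$, and in particular $\dim R(P) = \dim R(Q)$.

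The natural candidates, following Sz.-Nagy and Kato, are
$$U := QP + (I-Q)(I-P), \qquad V := PQ + (I-P)(I-Q).$$
Using $P^2 = P$ (so $(I-P)P = 0$) and $Q(I-Q) = 0$, a one-line check gives $UP = QP$ and $QU = QP$, so $UP = QU$; symmetrically $VQ = PQ = PV$. Hence $U$ and $V$ are intertwiners in opposite directions, and it remains only to show $U$ is invertible on $X$.

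The crux is the algebraic identity
$$UV \;=\; VU \;=\; I - (P-Q)^2.$$
Expanding $UV$, the cross terms $QP\,(I-P)(I-Q)$ and $(I-Q)(I-P)\,PQ$ vanish thanks to $P(I-P) = 0 = (I-Q)Q$, leaving $QPQ + (I-Q)(I-P)(I-Q)$. A further expansion together with $P^2 = P$ and $Q^2 = Q$ reduces this to $I - P - Q + PQ + QP$, which is exactly $I - (P-Q)^2$ since $(P-Q)^2 = P + Q - PQ - QP$. The computation for $VU$ is symmetric.

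The hypothesis $\|P - Q\| < 1$ now gives $\|(P-Q)^2\| \le \|P - Q\|^2 < 1$, so $I - (P-Q)^2$ is invertible on $X$ by the Neumann series; denote its inverse $W$. Then $UV = VU = W^{-1}$ shows that $U$ (and $V$) are two-sided invertible on $X$ with $U^{-1} = VW = WV$, and the intertwining argument in the first paragraph concludes the proof. The only step that is not entirely routine is the identity $UV = I - (P-Q)^2$; everything else is a straightforward Neumann series plus the intertwining trick. I do not anticipate any real analytical obstacle, as this is essentially a formal computation valid in any Banach algebra containing the projectors $P$ and $Q$.
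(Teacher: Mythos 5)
Your proof is correct. The paper does not actually prove this lemma --- it simply cites \cite[paragraph I.4.6]{Kato} --- and what you have written out is precisely the classical Sz.-Nagy/Kato argument behind that citation: the intertwiners $U=QP+(I-Q)(I-P)$ and $V=PQ+(I-P)(I-Q)$, the identity $UV=VU=I-(P-Q)^2$, and the Neumann series. All the algebraic identities check out (the only cosmetic slip is that the vanishing of the cross term $(I-Q)(I-P)PQ$ uses $(I-P)P=0$ rather than $(I-Q)Q=0$, but both are trivially true), so there is nothing to add.
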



Let us now prove Lemma~\ref{lem:projector}. 
\begin{proof} Let $\Gamma := \{ z \in \C, \, |z| = \eta_2(\lambda)\}$ which is included in $\Omega$ for $\lambda$ small enough. We set $N:=2n$ and we define
\bean
\UU^0_\lambda :=\RR_{\BB_\lambda }  + ... +  (-1)^{N-1}  \, \RR_{\BB_\lambda} \,  (\AA \, \RR_{\BB_\lambda})^{N-1} \text{ and }
\quad \UU^1_\lambda :=  (-1)^N  \, \RR_{\LL_0} \, (\AA \, \RR_{\BB_\lambda})^N  , 
\eean
Notice that Lemma~\ref{lem:dissip} implies that $z \mapsto \RR_{\BB_\lambda }(z)$ is holomorphic in $\bar B (0, \eta_2(\lambda))$ for $\lambda$ small enough and consequently that $\int_\Gamma \UU^0_\lambda(z) \, dz =0$. We can then compute:
\bean
\Pi_{\LL_\lambda,-\alpha_1}  
&=& { i \over 2 \pi} \int_\Gamma \RR_{\LL_\lambda} (z) \, dz
\\
&=& { i \over 2 \pi} \int_\Gamma \UU_\lambda (z) \,   (I+K_\lambda(z))^{-1} \, dz
\\
&=& { i \over 2 \pi} \int_\Gamma \UU^0_\lambda(z) \, \{ I - K_\lambda (z)\, (I+K_\lambda(z))^{-1} \}\, dz
\\
&&  + { i \over 2 \pi} \int_\Gamma   \UU^1_\lambda(z)  \, (I+K_\lambda(z))^{-1} \, dz
\\
&=& { 1 \over 2i \pi} \int_\Gamma \UU^0_\lambda(z) \,   K_\lambda (z)\, (I+K_\lambda(z))^{-1} \, dz
\\
&&  + (-1)^n \, { i \over 2 \pi} \int_\Gamma   \RR_{\LL_0}(z) \, (\AA \, \RR_{\BB_\lambda}(z))^N  \, (I+K_\lambda(z))^{-1} \, dz.
\eean
Since $(\AA \RR_{\BB_\lambda}(z))^N$ appears in the two parts of the expression of $\Pi_{\LL_\lambda,-\alpha_1} $,  we deduce that~\eqref{eq:lem3-1} holds. \\

Concerning the estimate on $\Pi_{\LL_0,-\alpha_1}  - \Pi_{\LL_\lambda,-\alpha_1} $, we begin by writing
$$
 \RR_{\LL_0} (z) =  \RR_{\BB_0} (z)+ ... +  (-1)^{N-1}  \, \RR_{\BB_0}(z) \,  (\AA \, \RR_{\BB_0}(z))^{N-1}  + (-1)^N  \, \RR_{\LL_0}(z) \, (\AA \, \RR_{\BB_0}(z))^N
 $$
 which implies that
\bean
\Pi_{\LL_0,-\alpha_1} 
&=& { i \over 2 \pi} \int_\Gamma \RR_{\LL_0} (z) \, dz
\\
&=&  (-1)^n  \,  { i \over 2 \pi} \int_\Gamma  \RR_{\LL_0}(z) \, (\AA \, \RR_{\BB_0}(z))^N   \, dz.
\eean
Finally, we deduce that
\bean
&& \Pi_{\LL_0,-\alpha_1}  - \Pi_{\LL_\lambda,-\alpha_1}   \\
&=& (-1)^n  \,  { i \over 2 \pi} \int_\Gamma  \RR_{\LL_0}(z) \, \{(\AA \, \RR_{\BB_0}(z))^N  
-  (\AA \, \RR_{\BB_\lambda}(z))^N  \, (I+K_\lambda(z))^{-1} \}\, dz
\\
&& -  { 1 \over 2 i\pi} \int_\Gamma  \UU^0_\lambda(z) \,  K_\lambda (z)\, (I+K_\lambda(z))^{-1} \, dz.
\eean

Since $K_\lambda(z)$ appears in the second term, we deduce that it is bounded by $\eta_2(\lambda)$. Concerning the first term, we rewrite it as
\bean
(\AA \, \RR_{\BB_0}(z))^{2n} - (\AA \, \RR_{\BB_\lambda}(z))^{2n}  + (\AA \, \RR_{\BB_\lambda}(z))^{2n}  ( I -  (I+K_\lambda(z))^{-1} ).
\eean
The second part of this expression is bounded by $\eta_2(\lambda)/(1-\eta_2(\lambda))$ because of the bound on the norm of $K_\lambda$. The first part can be written as 
\bean
\sum_{k=0}^{2n} \, (\AA \, \RR_{\BB_0}(z))^{k} \, \AA \, (\RR_{\BB_0}(z) - \RR_{\BB_\lambda}(z)) \, (\AA \, \RR_{\BB_\lambda}(z))^{2n-k-1}.
\eean
In addition, the bound on the norm of $\BB_\lambda - \BB_0$ given by Lemma~\ref{lem:Llambda-L0} gives a bound on the norm of $\RR_{\BB_\lambda}(z) -\RR_{\BB_0}(z)$ because
$$
\RR_{\BB_1}(z)- \RR_{\BB_\lambda}(z) = \RR_{\BB_\lambda}(z) \, (\BB_\lambda - \BB_0) \, \RR_{\BB_0}(z).
$$
Since for all $k$, $0 \leq k \leq 2n$ we have $k \geq n$ or $2n - k -1 \geq n$, we can use Lemma~\ref{lem:Tnfinal} and conclude that $(\AA \, \RR_{\BB_0}(z))^{2n} - (\AA \, \RR_{\BB_\lambda}(z))^{2n} $ is bounded by $C \eta_1(\lambda)$, which concludes the proof of \eqref{eq:lem3-2}.

The last part of Lemma~\ref{lem:projector} is nothing but Lemma~\ref{lem:kato} because for $\lambda$ close enough to~$0$, $ \eta_3(\lambda) < 1$. 
\end{proof}

We can now finish the proof of Theorem~\ref{theo:linearmainresult}-{\bf (i)}. The previous lemma implies that there exist $\xi_1, ..., \xi_5 \in \C$ such that
$$
\Sigma(\LL_\lambda) \cap \Delta_{-\alpha_1} = \{\xi_1, ... , \xi_5 \}. 
$$
Moreover, we know that $0$  is a four-dimensional eigenvalue due to the conservation of mass and momentum. Since the operator is real, we can deduce that there exists $\mu_\lambda \in \R$ such that
$$
\Sigma(\LL_\lambda) \cap \Delta_{-\alpha_1} = \{0, \mu_\lambda \}.
$$

\smallskip
\subsubsection*{Step 4 of the proof: fine study of spectrum close to 0}
Concerning the case of a constant coefficient of inelasticity, we refer to~\cite[Section~5.2, Step~2]{MM2} for the proof of Theorem~\ref{theo:linearmainresult}-{\bf(ii)} (the first order expansion of $\mu_\lambda$~(\ref{eq:mulambda})). 
Let us deal with the non-constant case. 

We first denote $\phi_0$ the energy eigenfunction of the the elastic linearized operator associated to $0$ such that $\|\phi_0\|_{L^1_v(\langle v \rangle ^2)} =1$. We also denote $\Pi_0$ the projection on $\R \, \phi_0$ and $\pi_0 \psi$ the coordinate of $\Pi_0 \, \psi$ on $\R \,  \phi_0$ i.e. $\Pi_0 \, \psi = (\pi_0 \psi) \,  \phi_0$. Finally, we denote $\phi_\lambda$ the unique eigenfunction associated to $\mu_\lambda$ such that $\|\phi_\lambda\|_{L^1_v(\langle v \rangle^2)}=1$ and $\pi_0 \phi_\lambda \geq 0$. 

By integrating in $v$ the eigenvalue equation related to $\mu_\lambda$
$$
\LL_\lambda \, \phi_\lambda = \mu_\lambda \, \phi_\lambda
$$
against $|v|^2$, we get
\beqn \label{eq:energy}
2 \int_{\R^3} \widetilde{Q}_{e_\lambda}(G_\lambda, \phi_\lambda) \, |v|^2 \, dv + \lambda^\gamma \int_{\R^3} \Delta_v\,  \phi_\lambda \, |v|^2 \, dv = \mu_\lambda \, \EE(\phi_\lambda).
\eeqn

We now compute the left-hand side of~(\ref{eq:energy}). By a classical computation which uses~(\ref{eq:lossenergy}), we have:
$$
2 \int_{\R^3} \widetilde{Q}_{e_\lambda}(G_\lambda, \phi_\lambda) \, |v|^2 \, dv = - \int_{\R^3 \times \R^3\times \Sp^2} |u|^3 G_{\lambda_*} \phi_\lambda \, \frac{1-\widehat{u} \cdot \sigma}{4} (1-{e_\lambda}^2) \, d\sigma \, dv_* \, dv
$$
and using polar coordinates
$$
\int_{\Sp^2}   { \frac{1-\widehat{u} \cdot \sigma}{4} \left(1-{e_\lambda}^2 \left( |u| \sqrt{\frac{1-\widehat{u} \cdot \sigma}{2}} \right) \right) } \, d\sigma = 4 \pi \int_0^1 \left( 1 - {e_\lambda}^2 \left( |u| y \right) \right) y^3 \, dy.
$$
Let us define
$$
\psi_e(r) := 4 \pi \, r^{3/2} \,  \int_0^1 (1-e^2(\sqrt{r} z)) \, z^3 \, dz,
$$
we can compute $\psi_{e_\lambda} (r) = \lambda^{-3} \, \psi_e(\lambda^2 r)$. We deduce that 
$$
2 \int_{\R^3} \widetilde{Q}_{e_\lambda}(G_\lambda, \phi_\lambda) \, |v|^2 \, dv = - \frac{1}{ \lambda^{3}} \, \int_{\R^3 \times R^3} G_{\lambda_*} \phi_\lambda \, \psi_e(\lambda^2 |u|^2) \, dv_* \, dv.
$$
We also have 
$$
\int_{\R^3} \Delta_v\,  \phi_\lambda \, |v|^2 \, dv = 6 \int_{\R^3} \phi_\lambda \, dv = 6 \, \rho(\phi_\lambda).
$$

Dividing~(\ref{eq:energy}) by $\lambda^\gamma$, we hence obtain
\beqn \label{eq:energy2}
- \frac{1}{ \lambda^{3+\gamma}} \, \int_{\R^3 \times \R^3} G_{\lambda_*} \phi_\lambda \, \psi_e(\lambda^2 |u|^2) \, dv_* \, dv  + 6 \, \rho(\phi_\lambda) = \frac{1}{\lambda^{\gamma}} \, \mu_\lambda \, \EE(\phi_\lambda).
\eeqn

\smallskip
We would like to make $\lambda$ tend to $0$ in~(\ref{eq:energy2}). To do that, we introduce the following notations:
$$
I_\lambda(f,g) := \int_{\R^3\times\R^3} f_* g \, \zeta_\lambda (|u|^2) \, dv_* \, dv \quad \text{with} \quad \zeta_\lambda (r^2) = \frac{1}{\lambda^{3+\gamma}} \, \psi_e (\lambda^2 r^2),
$$
and 
$$
I_0(f,g) := \int_{\R^3\times\R^3} f_* g \, \zeta_0 (|u|^2) \, dv_* \, dv \quad \text{with} \quad \zeta_0 (r^2) = \frac{a}{4+\gamma} \, r^{3+\gamma}.
$$
Let us now prove that $I_\lambda (G_\lambda, \phi_\lambda)$ tends to $I_0(G_0,\phi_0)$ as $\lambda$ tends to $0$. We state the following lemma which is going to be useful. We do not prove it here because the proof is the same as the one of~\cite[Lemma~5.17]{MM}.

\begin{lem} \label{lem:eigenfunctions}
Let $k$, $q \in \N$. We have the following result:
$$
\|�\phi_\lambda - \phi_0 \|_{W^{k,1}_v(\langle v \rangle ^q m)} \xrightarrow[\lambda \rightarrow 0]{} 0.
$$
\end{lem}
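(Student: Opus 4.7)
The plan, following the spatially homogeneous argument of~\cite[Lemma~5.17]{MM}, is to first establish uniform-in-$\lambda$ bounds on $\phi_\lambda$ in highly regular and strongly weighted spaces, then extract a limit in a weak norm, identify it as $\phi_0$, and conclude by interpolation.

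First, I would derive uniform $\lambda$-independent bounds. From the splitting $\LL_\lambda = \AA + \BB_\lambda$ and the eigenvalue equation $\LL_\lambda \phi_\lambda = \mu_\lambda \phi_\lambda$, one gets $\phi_\lambda = -\RR_{\BB_\lambda}(\mu_\lambda)\, \AA\, \phi_\lambda$. Since $\mu_\lambda \to 0$ while the hypodissipativity of $\BB_\lambda + \alpha_0$ from Lemma~\ref{lem:dissip} extends to every space $W^{k,1}_v(\langle v\rangle^q m)$ (the proof goes through verbatim), the resolvent $\RR_{\BB_\lambda}(\mu_\lambda)$ is bounded uniformly in $\lambda$ on each such space. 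Combined with the smoothing property of $\AA$ from Lemma~\ref{lem:Adelta}, which sends $L^1_v(\langle v\rangle)$ into compactly supported $H^s_v$ functions, this identity together with the normalization $\|\phi_\lambda\|_{L^1_v(\langle v\rangle^2)} = 1$ yields by iteration $\sup_{\lambda \in (0,\lambda']} \|\phi_\lambda\|_{W^{k,1}_v(\langle v\rangle^q m)} < \infty$ for every $k, q \in \N$.

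Second, I would prove weak convergence and identify the limit. Rewriting $\LL_0 \phi_\lambda = \mu_\lambda \phi_\lambda - (\LL_\lambda - \LL_0)\phi_\lambda$ and applying Lemma~\ref{lem:Llambda-L0} together with the uniform bounds shows that $\LL_0 \phi_\lambda \to 0$ in a weaker norm. The compactness afforded by the uniform higher-order bounds allows extraction from any subsequence of a further subsequence converging in $L^1_v(m)$ to some $\phi^\star$ satisfying $\LL_0 \phi^\star = 0$, hence $\phi^\star \in N(\LL_0) = \mathrm{Span}\{G_0, v_1 G_0, v_2 G_0, v_3 G_0, |v|^2 G_0\}$. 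To single out $\phi^\star$ within this kernel, I would integrate the eigenvalue equation against $1$ and against $v_i$: since $\widehat{Q}_{e_\lambda}$ and $\Delta_v$ preserve mass and momentum and the transport term vanishes on spatially homogeneous functions, one obtains $\mu_\lambda \int \phi_\lambda\, dv = 0$ and $\mu_\lambda \int v_i \phi_\lambda\, dv = 0$; since $\mu_\lambda \neq 0$, these integrals vanish and pass to the limit, so $\phi^\star$ has zero mass and momentum, hence $\phi^\star \in \R \phi_0$. The normalizations $\|\phi_\lambda\|_{L^1_v(\langle v\rangle^2)} = 1$ and $\pi_0 \phi_\lambda \geq 0$ then pin down $\phi^\star = \phi_0$, so by uniqueness of subsequential limits the full family $(\phi_\lambda)$ converges to $\phi_0$ in $L^1_v(m)$.

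Finally, convergence in $W^{k,1}_v(\langle v\rangle^q m)$ for arbitrary $k, q \in \N$ follows by applying the interpolation inequality of Lemma~\ref{lem:interp} to $\phi_\lambda - \phi_0$, combining the uniform higher-order bounds of the first step with the $L^1_v(m)$ convergence of the second. The main obstacle is that $\mu_\lambda$ merges into the spectrum of $\LL_0$ at $0$, ruling out a direct contour-integral separation of the $\mu_\lambda$-eigenspace from the four-dimensional kernel of $\LL_\lambda$; the resolution is to use conservation of mass and momentum to algebraically carve the one-dimensional energy direction out of $\mathrm{R}(\Pi_{\LL_\lambda,-\alpha})$ and identify the limit unambiguously.
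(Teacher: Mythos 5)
Your proposal is correct and takes essentially the same route as the argument the paper relies on (the paper gives no details, simply citing~\cite[Lemma~5.17]{MM}): uniform-in-$\lambda$ bounds on $\phi_\lambda$ in regular, strongly weighted spaces via the factorization $\phi_\lambda = -\RR_{\BB_\lambda}(\mu_\lambda)\,\AA\,\phi_\lambda$ together with the hypodissipativity of $\BB_\lambda+\alpha_0$ and the smoothing of $\AA$, identification of the limit inside $N(\LL_0)$ through the vanishing of mass and momentum and the normalization $\|\phi_\lambda\|_{L^1_v(\langle v\rangle^2)}=1$, $\pi_0\phi_\lambda\ge 0$, and an upgrade to the $W^{k,1}_v(\langle v\rangle^q m)$ norms by the interpolation of Lemma~\ref{lem:interp}. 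The only (harmless) deviations are that you extract the limit by compactness instead of using the projector convergence of Lemma~\ref{lem:projector}, and that to apply Lemma~\ref{lem:interp} you should record that the same resolvent identity also yields the uniform $L^1_v(m^{12})$ and $H^{k'}_v$ bounds, not only those with weight $\langle v\rangle^q m$.
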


To prove that $I_\lambda (G_\lambda, \phi_\lambda)$ tends to $I_0(G_0,\phi_0)$ as $\lambda$ tends to $0$, let us write the following inequality:
\begin{align*}
|I_\lambda (G_\lambda, \phi_\lambda) - I_0(G_0, \phi_0)| &\leq |I_\lambda (G_\lambda, \phi_\lambda) - I_0(G_\lambda, \phi_\lambda)| + |I_0 (G_\lambda, \phi_\lambda) - I_0(G_0, \phi_0)| \\
&=: J^1_\lambda + J^2_\lambda.
\end{align*}

We first deal with $J^2_\lambda$:
\begin{align*}
J^2_\lambda = &\left| \int_{\R^3\times\R^3} (G_{\lambda_*} \phi_\lambda - G_{0_*} \phi_0) \,  \zeta_0(|u|^2) \, dv_* \, dv \right| \\
\leq &\, \,  C \int_{\R^3\times\R^3}  | G_{\lambda_*} - G_{0_*} | \,\phi_0 \,  \langle v \rangle ^{3+\gamma} \, \langle v_* \rangle^{3+\gamma} \, dv_* \, dv \\
&+ C \int_{\R^3\times \R^3}  G_{\lambda_*} \, |\phi_\lambda - \phi_0 |�\, \langle v \rangle ^{3+\gamma} \, \langle v_* \rangle^{3+\gamma} \, dv_* \, dv \\
\leq & \, \, C \left( \|G_\lambda - G_0 \|_{L^1_v ( \langle v \rangle ^{3+\gamma})} \,   \|�\phi_0 \|_{L^1_v ( \langle v \rangle ^{3+\gamma})} +  \|G_\lambda \|_{L^1_v ( \langle v \rangle ^{3+\gamma})}  \, \|\phi_\lambda - \phi_0 \|_{L^1_v ( \langle v \rangle ^{3+\gamma})} \right) \\
\leq & \, \, C \left( \|G_\lambda - G_0 \|_{L^1_v ( \langle v \rangle ^{3+\gamma})} +  \|\phi_\lambda - \phi_0 \|_{L^1_v ( \langle v \rangle ^{3+\gamma})} \right) \xrightarrow[\lambda \rightarrow 0]{} 0
\end{align*}
because of Lemmas~\ref{lem:Glambda-G0} and~\ref{lem:eigenfunctions}. 

Let us now establish an estimate on $J^1_\lambda$:
$$
J^1_\lambda \leq \int_{\R^3\times\R^3}   G_{\lambda_*} \phi_\lambda \, |\zeta_\lambda (|u|^2) - \zeta_0(|u|^2) | \, dv_* \, dv =: D_\lambda.
$$
We can rewrite the difference $\zeta_\lambda(r^2) - \zeta_0(r^2)$ in the following way:
$$
\zeta_\lambda(r^2) - \zeta_0(r^2) = \frac{r^{3+\gamma}}{2} \int_0^1 \left( \frac{1- e^2(\lambda r z)}{(\lambda r z)^\gamma} - 2a \right) z^{3+\gamma}�\, dz,
$$
which allows us to get an estimate on this difference using Assumption~\ref{ass}-(\ref{ass:3}). There exists a constant $C >0$ such that 
$$
\forall \, \lambda \in (0,1], \quad \forall r >0,  \quad |\zeta_\lambda(r^2) - \zeta_0(r^2)| \leq  C \left( r^{3 + 2\gamma} \,\lambda^\gamma  + r^{3+ \gamma + \overline{\gamma}}\, \lambda^{\overline{\gamma}}  + r^{3+ \overline{\gamma}}\, \lambda^{\overline{\gamma} - \gamma}  \right).
$$
Denoting $\widetilde{\gamma} := \min (\gamma, \overline{\gamma} - \gamma)$, we can deduce that 
\begin{align*}
D_\lambda &\leq \int_{\R^3\times \R^3} G_{\lambda_*} \phi_\lambda \, \lambda^{\widetilde{\gamma}} |u|^{3+ \gamma + \overline{\gamma}}�\, dv_* \, dv \\
& \leq C \, \lambda^{\widetilde{\gamma}} \, \|G_\lambda\|_{L^1_v(\langle v \rangle ^{3+ \gamma + \overline{\gamma}})} \|\phi_\lambda\|_{L^1_v(\langle v \rangle ^{3+ \gamma + \overline{\gamma}})} \\
& \leq C \, \lambda^{\widetilde{\gamma}}.
\end{align*}
It yields the result: $J^1_\lambda\xrightarrow[\lambda \rightarrow 0]{} 0$.

\smallskip
We can now make $\lambda$ tend to $0$ in~(\ref{eq:energy2}). Using the previous result \linebreak $I_\lambda (G_\lambda, \phi_\lambda) \rightarrow I_0(G_0, \phi_0)$, the fact that the mass of $\phi_0$ is $0$ and the convergences $G_\lambda \rightarrow G_0$ and $\phi_\lambda \rightarrow \phi_0$ (Lemmas~\ref{lem:Glambda-G0} and~\ref{lem:eigenfunctions}), we deduce that 
$$
\frac{\mu_\lambda}{\lambda^\gamma} \EE(\phi_0) = - I_0(G_0, \phi_0) + o(1).
$$
We finally conclude that there exists a constant $C>0$ such that 
$$
\mu_\lambda = -C \lambda^\gamma + o(\lambda^\gamma).
$$

\smallskip
\subsubsection*{Step 5 of the proof: semigroup decay} 
In order to get our semigroup decay, we are going to apply the following quantitative spectral mapping theorem which comes from~\cite{MS}. We give here a simpler version and hence give the proof which is easier in this case.

\begin{prop} \label{prop:SGdecay}
Consider a Banach space $X$ and an operator $\Lambda \in \CCC(X)$ so that \linebreak $\Lambda = \AA + \BB$ where $\AA \in \BBB(X)$ and $\BB - a$ is hypodissipative on $X$ for some $a \in \R$. We assume furthermore that there exists a family $X_j$, $1 \leq j \leq m$, $m \geq 2$ of intermediate spaces such that 
$$
X_m \subset \hbox{\rm D}(\Lambda^2)  \subset X_{m-1} \subset ... \subset X_2 \subset X_1 = X, 
$$
and a family of operators $\Lambda_j, \AA_j, \BB_j \in \CCC(X_j)$ such that
$$
\Lambda_j = \AA_j + \BB_j, \quad \Lambda_j = \Lambda_{|X_j}, \quad  \AA_j = \AA_{|X_j}, \quad \BB_j = \BB_{|X_j},
$$
and that there holds
\begin{itemize}
\item[{\bf (i)}] $(\BB_j - a)$ is hypodissipative on $X_j$; 
\item[{\bf (ii)}] $\AA_j \in \BBB(X_j)$;
\item[{\bf (iii)}] there exists $n \in \N$ such that $T_n(t) := \left( \AA S_\BB(t) \right) ^{(\ast n)}$ satisfies $\|T_n(t)\|_{\BBB(X,X_m)} \leq C e^{at}$. 
\end{itemize}
Then the following localization of the principal part of the spectrum 
\begin{itemize}
\item[{\bf (1)}] there are some distinct complex numbers $\xi_1, ..., \xi_k \in \Delta_a$, $k \in \N$ (with the convention $\{ \xi_1, ... \xi_k \} = \emptyset$ if $k=0$) such that one has 
$$
\Sigma(\Lambda)  \cap \Delta_a = \{\xi_1, ..., \xi_k \}  \subset \Sigma_d(\Lambda).
$$
\end{itemize}
implies the following quantitative growth estimate on the semigroup 
\begin{itemize}
\item[{\bf (2)}] for any $a' \in (a,\infty) \setminus \{ \Re e \, \xi_j, j=1,...,k\}$, there exists some constructive constant $C_{a'} >0$ such that 
$$
\forall t \geq 0, \quad \left\| S_\Lambda (t) - \sum_{j=1}^k S_{\Lambda}(t) \Pi_{\Lambda, \xi_j} \right\|_{\BBB(X)} \leq C_{a'} e^{a't}.
$$
\end{itemize}
\end{prop}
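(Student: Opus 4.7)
The plan is to prove Proposition~\ref{prop:SGdecay} by combining an iterated Duhamel representation of $S_\Lambda(t)$ with a complex contour deformation argument on the resolvent, in the spirit of the factorization method recalled earlier in Subsection~2.1 and developed in~\cite{GMM,MS}.

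First, I would start from the iterated Duhamel identity
$$
S_\Lambda(t) \;=\; \sum_{\ell=0}^{n-1}\bigl(S_\BB * T_\ell\bigr)(t) \;+\; \bigl(S_\Lambda * T_n\bigr)(t),
$$
where $T_\ell := (\AA\,S_\BB)^{(*\ell)}$ with the convention $T_0 = \mathrm{Id}$. By (i)--(ii) and the convolution estimates recalled after Lemma~\ref{lem:Tngeneral}, each of the partial terms $S_\BB * T_\ell$ with $0\le\ell\le n-1$ is bounded in $\BBB(X)$ by $C t^{\ell}e^{a t}$, hence already controlled by $C_{a'}e^{a' t}$ for any $a'>a$. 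The whole spectral content of $\Lambda$ is therefore carried by the remainder $U(t):=(S_\Lambda * T_n)(t)$, and the proof reduces to analyzing this single operator.

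Second, hypothesis (iii) tells us that $T_n(t)$ maps $X$ into $X_m\subset \mathrm{D}(\Lambda^2)$ with an $e^{a t}$ bound, so on the range of $T_n$ the semigroup $S_\Lambda$ acts on vectors in the domain of $\Lambda^2$, on which an inverse Laplace transform representation
$$
S_\Lambda(\tau) f \;=\; \frac{1}{2i\pi}\int_{a''-i\infty}^{a''+i\infty} e^{z\tau}\,\RR_\Lambda(z)\,f\,dz,\qquad \tau>0,
$$
is available and absolutely convergent, thanks to the extra factor $|z|^{-2}$ one gains from $f\in\mathrm{D}(\Lambda^2)$. Plugging this into the convolution $U(t)=\int_0^t S_\Lambda(t-s)T_n(s)\,ds$ gives a representation of $U(t)$ as a contour integral weighted by $e^{z t}$ together with a factor coming from $T_n$.

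Third, I would perform a contour shift from the line $\Re e\,z=a''$ (for some large $a''$) down to the line $\Re e\,z=a'$, where $a<a'$ and $a'\notin\{\Re e\,\xi_j\}_{j=1}^k$. By assumption (1) the only singularities of $\RR_\Lambda$ that are crossed in the strip $\{a'<\Re e\,z<a''\}$ are the $k$ discrete eigenvalues $\xi_1,\dots,\xi_k$, each a pole of finite order with residue exactly $e^{t\Lambda}\Pi_{\Lambda,\xi_j}$. Because the algebraic eigenspaces $\mathrm{R}(\Pi_{\Lambda,\xi_j})$ are finite dimensional, on these subspaces $e^{t\Lambda}\Pi_{\Lambda,\xi_j}=e^{t\Lambda\Pi_{\Lambda,\xi_j}}\Pi_{\Lambda,\xi_j}$, producing precisely the principal part in the statement. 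The remaining integral along $\Re e\,z=a'$ is then estimated by a uniform bound on $\|\RR_\Lambda(z)(\AA\RR_\BB(z))^n\|_{\BBB(X)}$ along this line, yielding the $C_{a'}e^{a' t}$ control.

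The main obstacle will be the third step: obtaining resolvent estimates that are uniform along the vertical line $\Re e\,z=a'$ and integrable in $\Im m\,z$. Hypodissipativity of $\BB-a$ only gives $\|\RR_\BB(z)\|_{\BBB(X)}\le (\Re e\,z-a)^{-1}$, which is not enough. The trick is to use the factorization
$$
\RR_\Lambda(z) \;=\; \sum_{\ell=0}^{n-1}(-1)^\ell\,\RR_\BB(z)\bigl(\AA\,\RR_\BB(z)\bigr)^\ell \;+\; (-1)^n\RR_\Lambda(z)\bigl(\AA\,\RR_\BB(z)\bigr)^n,
$$
and transfer the regularization of $T_n:X\to X_m$ (provided by (iii), via Laplace inversion on $S_\BB$) into a decay of the last factor as $|\Im m\,z|\to\infty$. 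Once this is achieved, combining with the already controlled partial Duhamel terms of Step~1 yields the desired semigroup decay, and the partial spectral mapping statement then follows from the finite-dimensional spectral mapping applied to the extracted projectors $\Pi_{\Lambda,\xi_j}$ together with the decay of the remainder.
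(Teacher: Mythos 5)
Your overall architecture is the same as the paper's (which follows \cite{MS,GMM}): an iterated Duhamel/factorization identity plus a representation of the remainder as an integral of $e^{zt}$ times resolvent factors along a vertical line $\Re e \, z = a'$, the lower-order Duhamel terms being controlled by (i)--(ii) and the principal part being extracted through the projectors $\Pi_{\Lambda,\xi_j}$. However, there is a genuine gap exactly at the step you yourself flag as ``the main obstacle'': a uniform bound on $\|\RR_\Lambda(z)(\AA\RR_\BB(z))^n\|_{\BBB(X)}$ along the line is not enough, since a merely bounded integrand on an infinite vertical line gives no convergent integral, and your proposed fix --- ``transferring the regularization of $T_n : X \to X_m$ into decay of the last factor as $|\Im m \, z| \to \infty$'' --- cannot work as stated. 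The Laplace transform of the bound $\|T_n(t)\|_{\BBB(X,X_m)} \le C e^{at}$ only yields $\sup_{z \in a' + i\R} \|(\AA\RR_\BB(z))^n\|_{\BBB(X,X_m)} < \infty$: regularization in the space scale produces uniformity in $z$, not decay in $\Im m \, z$.

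The paper's mechanism is different at precisely this point: the Duhamel expansion is pushed two steps further (terms $S_\BB * (\AA S_\BB)^{(*\ell)}$ for $\ell = 0,\dots,n+1$), so that the remainder $\ZZ(t)$ involves $(\AA\RR_\BB(z))^{n+2}$. Then (iii) gives $\sup_{z \in a'+i\R}\|(\AA\RR_\BB(z))^n\|_{\BBB(X,\mathrm{D}(\Lambda^2))} < \infty$, assumption (1) gives $\sup_{z \in a'+i\R}\|\RR_\Lambda(z)\|_{\BBB(X)} < \infty$, and the decisive $(1+|z|^2)^{-1}$ decay comes from the \emph{two extra} factors via the resolvent identity $\RR_\BB(z) = z^{-1}[\RR_\BB(z)\BB - I]$ applied twice on $\mathrm{D}(\BB^2) = \mathrm{D}(\Lambda^2)$, yielding $\|(\AA\RR_\BB(z))^2 f\|_X \lesssim (1+|z|^2)^{-1}\|f\|_{\mathrm{D}(\Lambda^2)}$ and hence $\|\ZZ(t)\|_{\BBB(X)} \le K_{a'} e^{a't}\int_\R (1+y^2)^{-1}dy$. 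Your Step~2 hint (writing $\RR_\Lambda(z) g = -g/z - \Lambda g/z^2 + z^{-2}\RR_\Lambda(z)\Lambda^2 g$ for $g = T_n(s)f \in \mathrm{D}(\Lambda^2)$) could in principle substitute for this, but you never connect it to the final estimate: you would still have to treat explicitly the non--absolutely convergent $1/z$ term, and to verify that the residues at $\xi_j$, which in your representation appear under the $s$-convolution as $e^{(t-s)\Lambda}\Pi_{\Lambda,\xi_j}T_n(s)$, recombine with the partial Duhamel terms into the stated principal part $e^{t\Lambda \Pi_{\Lambda,\xi_j}}\Pi_{\Lambda,\xi_j}$ (in the paper this recombination is built into the representation formula quoted from \cite{GMM}). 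As it stands, the decisive quantitative estimate is missing from your argument.
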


\begin{proof}
We have the following representation formula (see for instance the proof of~\cite[~Theorem~2.13]{GMM}):
$$
S_\Lambda(t) f = \sum_{j=1}^k S_{\Lambda,\xi_j}(t) f + \sum_{\ell=0}^{n+1} (-1)^\ell S_\BB * (\AA S_\BB)^{(*\ell)} (t) f + \ZZ(t) f,
$$
for any $f \in \mbox{\rm D}(\Lambda)$ and $t \geq 0$, where 
$$
\ZZ(t)f := \lim_{M \rightarrow \infty} \frac{(-1)^n}{2i\pi} \int_{a'- i M}^{a' + iM} {e^{zt} \, \RR_\Lambda(z) \, (\AA \RR_\BB(z))^{n+2}  f \, dz}.
$$
On the one hand, we know from \textbf{(i)} and \textbf{(ii)} that 
$$
\forall \, \ell = 0, ..., n+1, \quad \| S_\BB * (\AA S_\BB)^{(*\ell)} (t) \|_{\BBB(X)} \leq C_{a'} e^{a't}.
$$
On the other hand, because of \textbf{(iii)}, we have
$$
\sup_{z \in a' + i\R} \| (\AA \RR_\BB)^n(z) \|_{\BBB(X, \textrm{\rm D}(\Lambda^2))} \leq K_{a'}^1
$$
and because of \textbf{(1)}, since $\Lambda$ generates a semigroup,
$$
\sup_{z \in a' + i\R} \| \RR_\Lambda (z) \|_{\BBB(X)} \leq K_{a'}^2.
$$
Then, we are going to use the resolvent identity
\beqn \label{eq:resolvid}
\forall \, z \notin \Sigma(\BB), \quad \RR_\BB(z) = z^{-1} [\RR_\BB(z) \BB - I ]
\eeqn
to get an estimate on $\|(\AA \RR_\BB)^2(z)\|_{\BBB( \textrm{\rm D}(\Lambda^2),X)}$ if $|z| \geq 1$. 
Using twice (\ref{eq:resolvid}), we obtain
$$
\forall \, z \in \C, \, |z|\geq 1, \quad \|(\AA \RR_\BB)^2(z) f \|_X \leq K_{a'}^3 |z|^{-2} \|f\|_{\textrm{\rm D}(\BB^2)}
$$
and we notice that $ \mbox{\rm D}(\BB^2) =  \mbox{\rm D}(\Lambda^2)$ because $\AA$ is bounded. We finally obtain
$$
\forall \, z \in \C, \, |z|\geq 1, \quad \|(\AA \RR_\BB)^2(z) f \|_X \leq 2K_{a'}^3 \frac{1}{1+|z|^2} \|f\|_{\textrm{\rm D}(\Lambda^2)}.
$$
Moreover, we also have
$$
\forall \, z \in \C, \, |z|\leq 1, \quad \|(\AA \RR_\BB)^2(z) f \|_X \leq K_{a'}^4\frac{1}{1+|z|^2} \|f\|_{ \textrm{\rm D}(\Lambda^2)}.
$$
All together, we deduce that
$$
\|\ZZ(t) \|_{\BBB(X)} \leq K_{a'} \frac{e^{a't}}{2\pi} \int_{\R} \frac{dy}{1+y^2},
$$
which yields the result. 
\end{proof}

We can now prove the estimate on the semigroup decay~(\ref{eq:SGLlambda}). We apply Proposition~\ref{prop:SGdecay} with $a:=\mbox{\rm max} (-\alpha_0, -\alpha_1) <0$. We have $E_1 \subset \hbox{\rm D}(\LL_\lambda^2) \subset E_0 \subset E_{-1}$. Assumptions \textbf{(i)}, \textbf{(ii)} and \textbf{(iii)} are nothing but Lemmas~\ref{lem:dissip},~\ref{lem:Adelta} and~\ref{lem:Tnfinal}. And \textbf{(1)} is given by the previous steps of the proof. We hence conclude that we have the decay result~(\ref{eq:SGLlambda}) for any $\alpha' \in (0, \hbox{\rm min} (\alpha_0, \alpha_1)) \setminus \{-\mu_\lambda \}$.

\begin{theo}  \label{theo:linearmainresult}
There exists $\lambda_0 \in [0,1)$ such that for any $\lambda \in (0,\lambda_0]$, $\LL_\lambda$ satisfies the following properties in $\EE=W_x^{s,1} L^1_v (m)$, $s \in \N^*$:
\begin{itemize}
\item[{\bf (i)}] The spectrum $ \Sigma(\LL_\lambda) $ satisfies the separation property: $ \Sigma(\LL_\lambda) \cap \Delta_{-\alpha_1}  = \{ \mu_\lambda, 0 \} $ where $\alpha_1$ is given by Theorem~\ref{theo:elastic}, $\mu_\lambda$ is given by Proposition \ref{prop:linear} and satisfies~\eqref{eq:mulambda}. 
\item[{\bf (ii)}] For any $\alpha \in (0, \hbox{\rm min} (\alpha_0, \alpha_1)) \setminus \{-\mu_\lambda \}$ (where $\alpha_0$ is provided by Lemma~\ref{lem:dissip}), the semigroup generated by $\LL_\lambda$ has the following decay property 
\beqn \label{eq:SGLlambda2}
\forall \, t \geq 0, \quad \|S_{\LL_\lambda}(t)(I -  \Pi_{\LL_\lambda,0} -  \Pi_{\LL_\lambda,\mu_\lambda}) \|_{\BBB(\EE)} \leq C e^{-\alpha t}
\eeqn
for some $C>0$. 
\end{itemize}
\end{theo}

\noindent {\it Proof of Theorem \ref{theo:linearmainresult}.} The proof relies on the combination of Proposition \ref{prop:linear} and the Theorem~2.13 of enlargement on the functional space of semigroup decay from \cite{GMM}. Our small space is $E$ and our large space is $\EE$. The assumptions of the theorem are clearly fulfilled thanks to Lemmas \ref{lem:dissip} and \ref{lem:Adelta} and thus yield the conclusion.
\begin{cor} \label{cor:linearmainresult}
There exists $\lambda_* \in (0,\lambda_0]$ such that for any $\lambda \in (0,\lambda_*]$ and $\alpha \in (0, - \mu_\lambda)$, we have
\beqn \label{eq:SGlambdaalpha}
\|S_{\LL_\lambda}(t) (I- \Pi_{\LL_\lambda,0})\|_{\BBB(\EE)} \leq C e^{-\alpha t}.
\eeqn
\end{cor}
\begin{proof}
The proof is immediate using the first order expansion of $\mu_\lambda$~(\ref{eq:mulambda}) which implies that $\mu_\lambda<0$ for $\lambda$ close enough to $0$. 
\end{proof}

\medskip
 \subsection{A dissipative Banach norm for the full linearized operator}
Let us define a new norm on $\EE$ by 
\beqn \label{eq:newnorm}
\Nt h \Nt_{\EE} := \eta \|h\|_{\EE} + \int_{0}^{+\infty} \|S_{\LL_\lambda}(\tau)(I- \Pi_{\LL_\lambda, 0}) h \|_{\EE} \, d\tau, \quad \eta>0,
\eeqn
which is well-defined thanks to estimate (\ref{eq:SGlambdaalpha}) for $\lambda$ small enough.

\begin{prop} \label{prop:dissipnorm}
Consider $\lambda \in (0,\lambda_*]$. There exist $\eta >0$ and $\alpha_2>0$ such that for any $h_{in} \in \EE$, $\Pi_{\LL_\lambda, 0} h_{in} =0$, the solution $h(t):=S_{\LL_\lambda}(t) h_{in}$ to the initial value problem~(\ref{eq:Bol2}) satisfies:
$$
\forall \, t \geq 0, \quad \frac{d}{dt} \Nt h_t \Nt_{\EE} \leq -\alpha_2  \Nt h_t \Nt_{\EE^1},
$$
where $\EE^1 := W^{s,1}_x L^1_v(\langle v \rangle m)$ and $\Nt \cdot \Nt_{\EE^1}$ is defined as in~(\ref{eq:newnorm}).  
\end{prop}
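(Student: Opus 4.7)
The plan is to compute $\frac{d}{dt} \Nt h_t \Nt_{\EE_0}$ directly and compare it with $\Nt h_t \Nt_{\EE_0^1}$, using the gain-of-moment dissipativity of $\BB_\lambda$ from Lemma~\ref{lem:dissip} together with the semigroup decay from Theorem~\ref{theo:linearmainresult}. For the integral part, the semigroup property $S_{\LL_\lambda}(\tau)h_t = S_{\LL_\lambda}(\tau+t) h_{in}$ together with the change of variables $s = \tau + t$ yields
\[
\frac{d}{dt} \int_0^{+\infty} \|S_{\LL_\lambda}(\tau) h_t\|_{\EE_0} \, d\tau
= \frac{d}{dt} \int_t^{+\infty} \|S_{\LL_\lambda}(s) h_{in}\|_{\EE_0} \, ds
= -\|h_t\|_{\EE_0}.
\]
Finiteness of the integral follows from the assumption $\Pi_{\LL_\lambda,0} h_{in} = 0$ and the remark after Theorem~\ref{theo:linearmainresult}, which gives $\|S_{\LL_\lambda}(t) h_{in}\|_{\EE_0} \leq C e^{-\alpha_\lambda t} \|h_{in}\|_{\EE_0}$ for some $\alpha_\lambda \in (0, -\mu_\lambda)$.

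For the $\eta \|h_t\|_{\EE_0}$ term, I would use the splitting $\LL_\lambda = \AA + \BB_\lambda$. Inspection of the proof of Lemma~\ref{lem:dissip} shows that it actually gives the stronger estimate
\[
\frac{d}{dt} \|g_t\|_{\EE_0} \leq -\alpha_0 \|g_t\|_{\EE_0^1}
\]
for solutions of $\partial_t g_t = \BB_\lambda g_t$, since the collision frequency dissipation, the remainder $\widehat{Q}_{1,R}^{+,*}$ term and the difference $\widehat{Q}_{e_\lambda}-\widehat{Q}_1$ are all controlled by the $\langle v \rangle^{q+1}$-weighted norm rather than the $\langle v \rangle^q$-weighted one. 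Combining this with the boundedness $\|\AA h\|_{\EE_0} \leq C_\AA \|h\|_{\EE_0}$ provided by Lemma~\ref{lem:Adelta}, I obtain
\[
\frac{d}{dt} \|h_t\|_{\EE_0} \leq -\alpha_0 \|h_t\|_{\EE_0^1} + C_\AA \|h_t\|_{\EE_0},
\]
and altogether
\[
\frac{d}{dt} \Nt h_t \Nt_{\EE_0} \leq -\eta \alpha_0 \|h_t\|_{\EE_0^1} - (1 - \eta C_\AA) \|h_t\|_{\EE_0}.
\]

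It remains to upgrade the first term into a multiple of $\Nt h_t \Nt_{\EE_0^1}$. Since $\Pi_{\LL_\lambda,0}$ commutes with the semigroup, $\Pi_{\LL_\lambda,0} h_t = 0$ for every $t \geq 0$, and Theorem~\ref{theo:linearmainresult} (applied with the underlying space taken to be $\EE_0^1$) yields $\|S_{\LL_\lambda}(\tau) h_t\|_{\EE_0^1} \leq C_1 e^{-\alpha_\lambda \tau} \|h_t\|_{\EE_0^1}$. Integrating, $\int_0^{+\infty} \|S_{\LL_\lambda}(\tau) h_t\|_{\EE_0^1} \, d\tau \leq (C_1/\alpha_\lambda) \|h_t\|_{\EE_0^1}$, so that
\[
\Nt h_t \Nt_{\EE_0^1} \leq \Bigl(\eta + \tfrac{C_1}{\alpha_\lambda}\Bigr) \|h_t\|_{\EE_0^1}.
\]
Choosing $\eta \in (0, 1/C_\AA]$ to discard the $\|h_t\|_{\EE_0}$ remainder and setting $\alpha_1 := \eta \alpha_0 \bigl(\eta + C_1/\alpha_\lambda\bigr)^{-1} > 0$ then gives the conclusion.

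\textbf{Main obstacle.} The principal point to justify is the last application of Theorem~\ref{theo:linearmainresult} in $\EE_0^1 = W^{s,1}_x W^{2,1}_v(\langle v \rangle^2 m)$, which is not one of the three base spaces $\EE_{-1}, \EE_0, \EE_1$ on which the theorem is stated. However, the entire machinery of Section~\ref{sec:linearized-operator} is flexible with respect to the polynomial weight $\langle v \rangle^q$: Lemma~\ref{lem:dissip} is proved for arbitrary $q$, Lemma~\ref{lem:Adelta} is weight-independent thanks to compact support, and Lemmas~\ref{lem:Tn}--\ref{lem:Tnfinal} as well as Lemma~\ref{lem:Llambda-L0} extend by the same interpolation arguments. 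Hence the whole spectral analysis can be re-run in $\EE_0^1$, yielding the same eigenvalue $\mu_\lambda$, the same projector $\Pi_{\LL_\lambda,0}$, and an exponential decay at rate $\alpha_\lambda$.
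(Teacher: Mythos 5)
Your argument is correct and follows essentially the same route as the paper: the integral part of the norm is differentiated exactly to give $-\|h_t\|_{\EE_0}$, the $\eta$-part is handled via the weight-gaining hypodissipativity of $\BB_\lambda$ from Lemma~\ref{lem:dissip} together with the boundedness of $\AA$, and one concludes by taking $\eta$ small. Your final step, making explicit that the semigroup decay and hence the equivalence $\Nt \cdot \Nt_{\EE_0^1}\lesssim \|\cdot\|_{\EE_0^1}$ also hold in the $\langle v\rangle^2 m$-weighted space, is a detail the paper leaves implicit, and your justification (the whole linear analysis is uniform in the polynomial weight $\langle v\rangle^q$) is the right one.
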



\begin{proof}
Notice that from the decay property of $\LL_\lambda$ provided by~(\ref{eq:SGlambdaalpha}), we have that the norms $\| \cdot \|_{\EE}$ and $\Nt \cdot  \Nt_{\EE}$ are equivalent for any $\eta >0$. 

Let us now compute the time derivative of the norm $\EE$ along $h_t$ where $h_t$ solves the linear evolution problem~(\ref{eq:Bol2}). Observe that $\Pi_{\LL_\lambda,0} h_t = 0$ due to the mass and momentum conservation properties of the linearized equation. Since the $x$-derivatives commute with the equation, we can set $s=0$. We thus only treat the case $L^1_x L^1_v (m)$. 
We compute
$$
\begin{aligned}
\frac{d}{dt} \Nt h_t \Nt_{L^1_x L^1_v (m)} &= \eta \int_{\R^3} \left( \int_{\T^3} \LL_\lambda(h_t) \, \text{sign} (h_t )\, dx \right) m \, dv + \int_0^{\infty} \frac{\partial}{\partial t} \| h_{t+\tau} \|_{L^1_x L^1_v ( m)} \, d\tau \\&=: I_1 + I_2.
\end{aligned}
$$
Concerning the first term, arguing as in the proof of Lemma~\ref{lem:dissip1}, we have from the dissipativity of $\BB_{\lambda}$ and the bounds on $\AA$
$$
I_1 \leq \eta \, \left(C \| h_t \|_{L^1_x L^1_v (m)} - K \| h_t \|_{L^1_x L^1_v ( \langle v \rangle m)}\right)
$$
for some constants $C, K >0$. 

The second term is computed exactly:
$$
I_2 = \int_0^{\infty} \frac{\partial}{\partial t} \|h_{t+\tau}\|_{L^1_x L^1_v ( m)} \, d\tau =  \int_0^{\infty} \frac{\partial}{\partial \tau} \|h_{t+\tau}\|_{L^1_x L^1_v ( m)} \, d\tau = -\|h_t\|_{L^1_x L^1_v (  m)}.
$$

The combination of the two last equations yields the desired result by choosing $\eta$ small enough. 
\end{proof}

\bigskip


\section{The nonlinear Boltzmann equation} 
\label{sec:nonlinear-equation}
\setcounter{equation}{0}
\setcounter{theo}{0}


In this section, the integer $s$ is fixed such that $s>6$ and we recall that 
$$
\EE = W^{s,1}_x L^1_v(m) \quad \text{and} \quad \EE^1 = W^{s,1}_x L^1_v(\langle v \rangle m).
$$
\subsection{The bilinear estimates}
We first establish bilinear estimates on the nonlinear term in equation~(\ref{eq:Bol1}). 
\begin{lem} \label{lem:bilinear}
In the space $X^q := W^{\sigma,1}_v W^{s,1}_x ( \langle v \rangle ^q m)$ with $s, \, \sigma \in \N$, $s > 6$ and $q \in \N$, the collision operator $Q$ satisfies 
$$
\|Q_{e_\lambda}(g,f)\|_{X^q} \leq C\left(\|g\|_{X^{q+1}} \|f\|_{X^{q}} +\|g\|_{X^{q}} \|f\|_{X^{q+1}}\right)
$$
for some constant $C>0$, where $X^{q+1}$ is defined as $X^q$. 
\end{lem}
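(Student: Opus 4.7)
The plan is to bootstrap the pointwise-in-$x$ velocity bilinear estimate from~\cite{MM,ACG} to the full $\EE^q$ norm by exploiting the fact that $Q_{e_\lambda}$ acts only in the velocity variable, so that $x$-derivatives pass through it and the Leibniz identity~(\ref{derivQ}) applies to the $v$-derivatives.

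First, for every pair of multi-indices $(\alpha,\beta)$ with $|\alpha|\le s$, $|\beta|\le \sigma$ and $|\alpha|+|\beta|\le \max(s,\sigma)$, I would expand by Leibniz
$$
\partial^\alpha_x\partial^\beta_v Q_{e_\lambda}(g,f)=\!\!\!\sum_{\alpha_1+\alpha_2=\alpha,\,\beta_1+\beta_2=\beta}\!\!\!\binom{\alpha}{\alpha_1}\binom{\beta}{\beta_1}\,Q_{e_\lambda}\!\bigl(\partial^{\alpha_1}_x\partial^{\beta_1}_v g,\,\partial^{\alpha_2}_x\partial^{\beta_2}_v f\bigr),
$$
and apply, for a.e.\ $x\in\T^3$, the symmetric hard-spheres velocity bilinear estimate
$$
\|Q_{e_\lambda}(G,H)\|_{L^1_v(\langle v\rangle^q m)}\le C\bigl(\|G\|_{L^1_v(\langle v\rangle^{q+1}m)}\|H\|_{L^1_v(\langle v\rangle^q m)}+\|G\|_{L^1_v(\langle v\rangle^q m)}\|H\|_{L^1_v(\langle v\rangle^{q+1}m)}\bigr).
$$
This velocity estimate is essentially contained in the cited works and in the proof of Lemma~\ref{lem:dissip}; the symmetric form is obtained by bounding $|v-v_*|\le \langle v\rangle+\langle v_*\rangle$ before distributing the weight.

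Next, I would integrate in $x$ by Fubini and use Hölder on $\T^3$: in each Leibniz term, by symmetry we may assume $|\alpha_1|\le s/2$, so the $g$-factor is placed in $L^\infty_x$ and the $f$-factor in $L^1_x$. The Sobolev embedding $W^{s',1}(\T^3)\hookrightarrow L^\infty(\T^3)$, valid for any $s'>3$, yields
$$
\|\partial^{\alpha_1}_x\partial^{\beta_1}_v g\|_{L^\infty_xL^1_v(\langle v\rangle^{q+1}m)}\le C\,\|g\|_{W^{|\alpha_1|+s',1}_xW^{|\beta_1|,1}_v(\langle v\rangle^{q+1}m)}\le C\,\|g\|_{\EE^{q+1}},
$$
provided we can pick $s'\in(3,s/2]$, which is precisely why the hypothesis $s>6$ is imposed. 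The remaining factor is bounded directly by $\|f\|_{\EE^q}$, so each term contributes $\|g\|_{\EE^{q+1}}\|f\|_{\EE^q}$ (or the symmetric counterpart when $|\alpha_2|\le s/2$).

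Summing the finitely many Leibniz contributions and both symmetric placements yields the claim. The only subtlety is bookkeeping of derivative counts against the constraints in the $\EE^q$-norm, but this is trivially respected since the split $|\alpha_1|\le s/2$ together with $s'\le s/2$ keeps every term within the allowed regularity budget; no genuine obstacle arises, the heart of the argument being the Sobolev embedding threshold $s>6$.
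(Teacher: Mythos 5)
Your proposal is correct and follows essentially the same route as the paper: a weighted $L^1_v$ bilinear estimate (extended to the inelastic operator via $|v'|^2\le |v|^2+|v_*|^2$), the Leibniz rule~(\ref{derivQ}) for the derivatives, and the Sobolev embedding $W^{s/2,1}_x(\T^3)\subset L^\infty_x(\T^3)$, which is exactly where $s>6$ enters; your explicit H\"older/embedding bookkeeping in $x$ is just the step the paper delegates to ``conclude as in~\cite{GMM}''.
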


\begin{proof}
Let us first consider the velocity aspect only of the norm with $\sigma = 0$. Concerning the case of a constant coefficient of inelasticity, we use that the elastic collision operator $Q_1$ satisfies (cf.~\cite{Mou})
$$
\|Q_1(g,f)\|_{L^1_v(m)} \leq C \left( \|f\|_{L^1_v(m)} \|g\|_{L^1_v(\langle v \rangle m)} +  \|f\|_{L^1_v(\langle v \rangle  m)} \|g\|_{L^1_v(m)}\right).
$$
First, it can be straightforwardly adapted to the case $L^1(\langle v \rangle^q m)$. Then, if $v'_\lambda$ and $v'_0$ denotes the post-collisional velocities in the inelastic case and in the elastic case with obvious notations, using the fact that we both have 
$$
|v'_\lambda|^2 \leq |v|^2 + |v_*|^2
$$
and
$$
|v'_0|^2 \leq |v|^2 + |v_*|^2,
$$
the same proof can be done in the inelastic case. We hence obtain that
\beqn \label{eq:bilinear}
\|Q_{e_\lambda}(g,f)\|_{L^1_v(\langle v \rangle ^q m)} \leq C \left( \|f\|_{L^1_v(\langle v \rangle ^q m)} \|g\|_{L^1_v(\langle v \rangle ^{q+1}m)} +  \|f\|_{L^1_v(\langle v \rangle ^{q+1} m)} \|g\|_{L^1_v(\langle v \rangle ^q m)}\right).
\eeqn
Then, from property~(\ref{derivQ}) and inequality~(\ref{eq:bilinear}), we deduce that
\bean
\|Q_{e_\lambda}(g,f)\|_{W^{\sigma,1}_v(\langle v \rangle ^q m)} &\leq& C \left( \|f\|_{W^{\sigma,1}_v(\langle v \rangle ^q m)} \|g\|_{W^{\sigma,1}_v(\langle v \rangle ^{q+1}m)} +  \right. \\
  && \left. \|f\|_{W^{\sigma,1}_v(\langle v \rangle ^{q+1} m)} \|g\|_{W^{\sigma,1}_v(\langle v \rangle ^q m)} \right)
\eean
as well as similar results from the other estimates.

As a final step, we consider the $x$ aspect of the norm. We use the Sobolev embedding $W^{s/2,1}_x(\T^3) \subset L^\infty_x(\T^3)$ with continuous embedding since $s > 6$ to conclude.
\end{proof}

\medskip
\subsection{The main results}
Let us now give some results on the Cauchy problem, stability and relaxation to equilibrium for the solutions to the full non-linear problem. We consider first the close-to-equilibrium regime (Theorem~\ref{theo:existence}), and then the weakly inhomogeneous regime (Theorem~\ref{theo:weak2}).  
\begin{theo} [Perturbative solutions close to equilibrium] \label{theo:existence}
Let us consider a restitution coefficient $e$ constant or satisfying Assumptions \ref{ass}-\ref{ass2} and $\lambda \in [0, \lambda_*]$ (where $\lambda_*$ is given by Corollary~\ref{cor:linearmainresult}). There is some constructive constant $\eps >0$ such that for any initial datum $f_{in} \in \EE$ satisfying
$$
\|f_{in} - G_\lambda \|_{\EE} \leq \eps,
$$
and $f_{in}$ has the same global mass and momentum as the equilibrium $G_\lambda$, there exists a unique global solution 
$f \in L^\infty_t(\EE) \cap L^1_t(\EE^1)$ to~(\ref{eq:Bol1}). 

This solution furthermore satisfies that for any ${\alpha} \in (0, -\mu_\lambda)$:
$$
\forall \, t \geq 0, \quad \| f_t - G_\lambda\|_{\EE} \leq C e^{-{\alpha} t} \, \|f_{in} - G_\lambda \|_{\EE}
$$
for some constructive constant $C \ge 1$. 
\end{theo}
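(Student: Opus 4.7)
The plan is to set $h := f - G_\lambda$, which satisfies the perturbed equation
$$
\partial_t h = \LL_\lambda h + Q_{e_\lambda}(h,h), \qquad h|_{t=0} = f_{in} - G_\lambda.
$$
The first observation is that since $Q_{e_\lambda}$ preserves mass and momentum and $f_{in}$ shares those with $G_\lambda$, one has $\Pi_{\LL_\lambda,0} h_{in} = 0$, a property which is propagated in time because $\Pi_{\LL_\lambda,0} Q_{e_\lambda}(h,h) = 0$ also. This is crucial: the improved decay estimate~(\ref{eq:SGlambdaalpha}), namely $\|S_{\LL_\lambda}(t)g\|_{\EE_0} \leq C e^{-\widetilde\alpha t}\|g\|_{\EE_0}$ for any $\widetilde\alpha \in (0,-\mu_\lambda)$, holds precisely on the invariant subspace $\{g : \Pi_{\LL_\lambda,0} g = 0\}$.

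Next, the key a priori estimate uses the dissipative norm $\Nt \cdot \Nt_{\EE_0}$ of Proposition~\ref{prop:dissipnorm}. Differentiating along the nonlinear flow and using the linear dissipation bound together with the fact that $Q_{e_\lambda}(h,h)$ has vanishing mass--momentum projection (so that $\tau \mapsto \|S_{\LL_\lambda}(\tau)Q_{e_\lambda}(h,h)\|_{\EE_0}$ is exponentially integrable), I would obtain
$$
\frac{d}{dt}\Nt h \Nt_{\EE_0} \leq -\alpha_1 \Nt h \Nt_{\EE_0^1} + C\,\|Q_{e_\lambda}(h,h)\|_{\EE_0}.
$$
Since $\EE_0 = W^{s,1}_x W^{2,1}_v(\langle v\rangle m)$ and $\EE_0^1 = W^{s,1}_x W^{2,1}_v(\langle v\rangle^2 m)$ match the weight shift in Lemma~\ref{lem:bilinear}, the bilinear estimate yields $\|Q_{e_\lambda}(h,h)\|_{\EE_0} \lesssim \|h\|_{\EE_0}\,\|h\|_{\EE_0^1}$, and combined with the equivalence of $\|\cdot\|$ and $\Nt\cdot\Nt$, this gives
$$
\frac{d}{dt}\Nt h \Nt_{\EE_0} \leq \bigl(-\alpha_1 + C'\Nt h \Nt_{\EE_0}\bigr)\Nt h \Nt_{\EE_0^1}.
$$
Choosing $\eps$ so small that $C'\Nt h_{in}\Nt_{\EE_0} < \alpha_1/2$, a continuity/bootstrap argument forces the bracket to remain $\leq -\alpha_1/2$ for all times, and $\Nt h(t)\Nt_{\EE_0}$ is non-increasing.

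Existence and uniqueness then follow by a standard Picard iteration $\partial_t h^{n+1} = \LL_\lambda h^{n+1} + Q_{e_\lambda}(h^n,h^n)$ with $h^0 = 0$: the above a priori bound gives uniform smallness of $h^n$ in $L^\infty_t(\EE_0)$, while an analogous estimate applied to the difference $h^{n+1}-h^n$ (again exploiting the bilinearity and the dissipative norm) shows the sequence is Cauchy and that the limit is unique. Finally, for the exponential relaxation rate, I would plug the solution into Duhamel's formula
$$
h(t) = S_{\LL_\lambda}(t)h_{in} + \int_0^t S_{\LL_\lambda}(t-s) Q_{e_\lambda}(h(s),h(s))\,ds,
$$
apply (\ref{eq:SGlambdaalpha}) term by term (legitimate because $\Pi_{\LL_\lambda,0}$ annihilates both $h_{in}$ and the nonlinear term), and use the integrability of $\|h\|_{\EE_0^1}$ afforded by the a priori estimate to close a Gronwall-type argument yielding $\|h(t)\|_{\EE_0} \leq C e^{-\widetilde\alpha t}\|h_{in}\|_{\EE_0}$ for any $\widetilde\alpha \in (0,-\mu_\lambda)$.

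The main obstacle is the compatibility of all three ingredients at once: the bilinear estimate consumes one power of $\langle v\rangle$, the dissipative norm ``buys back'' precisely this loss through the factor $\Nt \cdot \Nt_{\EE_0^1}$ in the right-hand side, and the spectral gap localized at $\mu_\lambda$ is what allows the $L^1_\tau$-integral in the definition of $\Nt \cdot \Nt_{\EE_0}$ to converge on the mass--momentum invariant subspace. This is the ``linearization trap'' alluded to in the introduction, and its verification --- together with the propagation of the vanishing projection through the nonlinear iteration --- is where the real work lies.
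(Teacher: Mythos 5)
Your proposal follows essentially the same route as the paper: the same dissipative-norm a priori estimate (Lemma~\ref{lem:apriori}, obtained from Proposition~\ref{prop:dissipnorm}, Lemma~\ref{lem:bilinear} and the vanishing projection $\Pi_{\LL_\lambda,0}Q_{e_\lambda}(h,h)=0$), the same Picard-type iterative scheme for existence with difference estimates for convergence and uniqueness, and a decay argument based on the spectral estimate~(\ref{eq:SGlambdaalpha}). The only cosmetic difference is that you conclude the rate $\widetilde{\alpha}\in(0,-\mu_\lambda)$ by a single Duhamel--Gronwall step using the integrability of $\|h\|_{\EE_0^1}$, whereas the paper first extracts the crude rate $e^{-Kt/2}$ and then bootstraps in finitely many steps; both rest on exactly the same ingredients and are interchangeable.
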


\smallskip
For the following theorem, we only consider the case of a constant restitution coefficient, namely, $e_\lambda(\cdot)$ is constant equal to $1-\lambda$.
\begin{theo}[Weakly inhomogeneous solutions] \label{theo:weak2}
Let us consider $\lambda$ in $(0,\lambda_*]$. Consider a spatially homogeneous distribution $g_{in}=g_{in}(v) \in  L^1_v\left(m^{k_0}\right) \cap H^{k_1}$ for $k_0$, $k_1$ large enough so that it gives rise to an homogeneous solution $g\in L^\infty_t(L^1_v( m)) \cap L^1_t(L^1_v(\langle v \rangle m)) $ that satisfies $\|g_t - G_\lambda\| _{L^1_v(m)} \rightarrow 0$ and with the same global mass and momentum as $G_\lambda$. 

There is some constructive constant $\eps(g_{in})>0$ such that for any initial datum $f_{in} \in \EE$ satisfying
$$
\|f_{in}-g_{in}\|_{\EE} \leq \eps(g_{in}),
$$
and $f_{in}$ has the same mass and momentum as $G_\lambda$ and $g_{in}$, there exists a unique global solution $f \in L^\infty_t(\EE) \cap L^1_t(\EE^1)$ to~(\ref{eq:Bol1}).

Moreover, this solution satisfies
$$
\forall \, t \geq 0, \quad \| f_t - g_t\|_{\EE} \leq C \, \eps(g_{in})
$$
and for any ${\alpha} \in (0,-\mu_\lambda)$,
$$
\forall \, t \geq 0, \quad \|f_t - G_\lambda\|_{\EE} \leq C e^{-{\alpha} t}
$$
for some constructive constant $C\ge 1$.
\end{theo}

 \begin{rem}
Let us emphasize here that we are not able to get such a result in the case of a non-constant restitution coefficient because of the lack of result concerning the long-time behavior of solutions to the homogeneous corresponding problem (no result is available for now on this problem for general initial data, meaning far from the equilibrium). Consequently, in the non-constant case, we can prove such a result in a weakly inhomogeneous setting considering an homogeneous distribution $g_{in}=g_{in}(v)$ which is close enough to the equilibrium. Indeed, using Theorem \ref{theo:existence}, we obtain the existence of a solution of the equation (\ref{eq:Bol1}) which converges to the equilibrium. However, we can not conclude if we do not suppose that $g_{in}$ is close enough to $G_\lambda$.
 \end{rem}

\medskip
\subsection{Proof of the main results} \label{subsec:mainresults}
\subsubsection{Proof of Theorem~\ref{theo:existence}}
We begin by giving the key a priori estimate. 
\begin{lem} \label{lem:apriori}
With the notations of Theorem~\ref{theo:existence}, in the space $\EE$, a solution $f_t$ to the Boltzmann equation formally writes $f_t = G_\lambda + h_t$, $\Pi_{\LL_\lambda,0} h_t =0$, and $h_t$ satisfies the estimate
$$
\frac{d}{dt} \Nt h_t \Nt_{\EE} \leq (C \Nt h_t \Nt_{\EE} - K) \Nt h_t \Nt_{\EE^1}
$$
for some constants $C$, $K >0$. 
 \end{lem}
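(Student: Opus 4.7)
The plan is to substitute $f_t=G_\lambda + h_t$ into equation~(\ref{eq:Bol1}), use the stationarity of $G_\lambda$, apply the linear dissipativity from Proposition~\ref{prop:dissipnorm} to $h_t$, and control the quadratic remainder via the bilinear estimate of Lemma~\ref{lem:bilinear}.

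First, since $G_\lambda$ is spatially homogeneous and satisfies $Q_{e_\lambda}(G_\lambda,G_\lambda)+\lambda^\gamma\Delta_v G_\lambda=0$, plugging $f_t=G_\lambda+h_t$ into~(\ref{eq:Bol1}) and using the bilinearity of $Q_{e_\lambda}$ produces
$$
\partial_t h_t = \LL_\lambda h_t + Q_{e_\lambda}(h_t,h_t),
$$
with $\LL_\lambda$ as in~(\ref{eq:Bol2}). The assumption that $f_{in}$ and $G_\lambda$ share the same mass and momentum, together with the conservation of these quantities by $Q_{e_\lambda}$, by $\Delta_v$ and by the transport term, yields that $h_t$ has zero mass and zero momentum for every $t\geq 0$. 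Since by Theorem~\ref{theo:linearmainresult} the algebraic eigenspace $\mathrm{R}(\Pi_{\LL_\lambda,0})$ coincides with the four-dimensional space of collisional invariants $\{G_\lambda, v_1G_\lambda, v_2G_\lambda, v_3G_\lambda\}$, this yields $\Pi_{\LL_\lambda,0}h_t=0$, which is precisely the condition required to invoke Proposition~\ref{prop:dissipnorm}.

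Next, I would split $\frac{d}{dt}\Nt h_t\Nt_{\EE_0}$ into a linear contribution (coming from $\LL_\lambda h_t$) and a nonlinear one (coming from $Q_{e_\lambda}(h_t,h_t)$). The linear contribution is handled directly by Proposition~\ref{prop:dissipnorm}, producing the dissipative term $-\alpha_1\Nt h_t\Nt_{\EE_0^1}$. For the nonlinear contribution, I would differentiate each of the two pieces of the norm~(\ref{eq:newnorm}) separately: for $\eta\|h_t\|_{\EE_0}$ a signum argument as in the proof of Lemma~\ref{lem:dissip} bounds it by $\eta\|Q_{e_\lambda}(h_t,h_t)\|_{\EE_0}$; for $\int_0^\infty\|S_{\LL_\lambda}(\tau)h_t\|_{\EE_0}\,d\tau$ the forcing introduces $\int_0^\infty\|S_{\LL_\lambda}(\tau)Q_{e_\lambda}(h_t,h_t)\|_{\EE_0}\,d\tau$, which by the exponential decay~(\ref{eq:SGlambdaalpha}) of $S_{\LL_\lambda}$ on the complement of $\mathrm{R}(\Pi_{\LL_\lambda,0})$ (note that $Q_{e_\lambda}(h_t,h_t)$ also has zero mass and momentum) is bounded by $C\|Q_{e_\lambda}(h_t,h_t)\|_{\EE_0}$. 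Finally, Lemma~\ref{lem:bilinear} applied with $q=1$ gives
$$
\|Q_{e_\lambda}(h_t,h_t)\|_{\EE_0}\leq C\|h_t\|_{\EE_0}\|h_t\|_{\EE_0^1},
$$
and the norm equivalences $\Nt\cdot\Nt_{\EE_0}\sim\|\cdot\|_{\EE_0}$, $\Nt\cdot\Nt_{\EE_0^1}\sim\|\cdot\|_{\EE_0^1}$ from the discussion preceding Proposition~\ref{prop:dissipnorm} turn this into $C'\Nt h_t\Nt_{\EE_0}\Nt h_t\Nt_{\EE_0^1}$. Combining the two contributions yields
$$
\frac{d}{dt}\Nt h_t\Nt_{\EE_0}\leq\bigl(C'\Nt h_t\Nt_{\EE_0}-\alpha_1\bigr)\Nt h_t\Nt_{\EE_0^1},
$$
which is the claimed inequality with $K:=\alpha_1$ and $C:=C'$.

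The main obstacle I anticipate is the bookkeeping for the nonlinear contribution to $\Nt\cdot\Nt_{\EE_0}$: one must make sure that the forcing term $Q_{e_\lambda}(h_t,h_t)$, which appears in the Duhamel representation of the second piece of the norm, still satisfies $\Pi_{\LL_\lambda,0}Q_{e_\lambda}(h_t,h_t)=0$ (so that the $\EE_0$-decay of $S_{\LL_\lambda}$ can be used), and that the factor $\langle v\rangle^2$ carried by $\EE_0^1$ is exactly what the bilinear estimate of Lemma~\ref{lem:bilinear} produces when taking $q=1$ in $\EE_0=W^{s,1}_xW^{2,1}_v(\langle v\rangle m)$. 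These are minor technical points but must be checked to ensure the resulting trinomial has the correct structure $(C\Nt h\Nt_{\EE_0}-K)\Nt h\Nt_{\EE_0^1}$ rather than an isotropic product like $\Nt h\Nt_{\EE_0^1}^2$, which would not suffice to close the bootstrap needed for Theorem~\ref{theo:existence}.
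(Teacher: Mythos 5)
Your argument is correct and follows essentially the same route as the paper's proof: decompose $\frac{d}{dt}\Nt h_t\Nt_{\EE_0}$ into the linear contribution, controlled by Proposition~\ref{prop:dissipnorm}, and the nonlinear one, controlled through both pieces of the norm~(\ref{eq:newnorm}) via the decay~(\ref{eq:SGlambdaalpha}) (using $\Pi_{\LL_\lambda,0}Q_{e_\lambda}(h_t,h_t)=0$) together with the bilinear estimate of Lemma~\ref{lem:bilinear} with $q=1$. The only caveat is your identification of $\mathrm{R}(\Pi_{\LL_\lambda,0})$ with $\mathrm{span}\{G_\lambda,v_1G_\lambda,v_2G_\lambda,v_3G_\lambda\}$, which is neither stated in the paper nor needed (and by itself would not give $\Pi_{\LL_\lambda,0}h_t=0$, since a projector is not determined by its range alone); what is actually used, exactly as in the paper, is that conservation of mass and momentum forces the projections of $h_t$ and of $Q_{e_\lambda}(h_t,h_t)$ to vanish.
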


 \begin{proof}
 We consider the case $L^1_x L^1_v (m)$, we will skip the proof of other cases which is similar. 
 We have
 $$
 \frac{d}{dt} \Nt h_t \Nt_{L^1_x L^1_v ( m)} = I_1 + I_2
 $$
 with
 \begin{align*}
 I_1 := &\, \eta \int_{\R^3} \left( \int_{\T^3} \LL_\lambda h_t \, \text{sign}(h_t) \, dx \right) m \, dv \\
 & + \int_0^\infty \int_{\R^3} \left( \int_{\T^3} S_{\LL_\lambda}(\tau)(\LL_\lambda h_t) \, \text{sign}( S_{\LL_\lambda}(\tau)h_t) \, dx \right) m \, dv \, d\tau
 \end{align*}
 and
 \begin{align*}
I_2:= &\, \eta \int_{\R^3} \left( \int_{\T^3} Q_{e_\lambda}(h_t, h_t) \, \text{sign}(h_t) \, dx \right) m \, dv \\
 & + \int_0^\infty \int_{\R^3} \left( \int_{\T^3} S_{\LL_\lambda}(\tau) Q_{e_\lambda}(h_t, h_t) \, \text{sign}( S_{\LL_\lambda}(\tau)h_t) \, dx \right)m \, dv \, d\tau
\end{align*}
We already know from Proposition~\ref{prop:dissipnorm} that by choosing $\eta$ small enough, we have
$$
I_1 \leq -K \Nt h_t \Nt_{L^1_x L^1_v (\langle v \rangle m)}, \quad K>0.
$$
For the second term, we have
\begin{align*}
I_2 \leq \, & \eta \int_{\R^3} \|Q_{e_\lambda}(h_t, h_t)\|_{L^1_x( m)} \, dv + \int_0^\infty \int_{\R^3} \| S_{\LL_\lambda}(\tau)Q_{e_\lambda}(h_t, h_t) \|_{L^1_x ( m)} \, dv \, d\tau \\
\leq \, &\eta \, \|Q_{e_\lambda} (h_t, h_t)\|_{L^1_x L^1_v ( m)} + \int_0^\infty \|S_{\LL_\lambda}(\tau)Q_{e_\lambda}(h_t, h_t) \|_{L^1_x L^1_v ( m)} \, d\tau.
\end{align*}
We thus deduce 
$$
\frac{d}{dt} \Nt h_t\Nt_{L^1_x L^1_v ( m)} \leq  
-K \Nt h_t\Nt_{L^1_x L^1_v (\langle v \rangle m)} +  \Nt Q_{e_\lambda}(h_t, h_t)  \Nt_{L^1_x L^1_v ( m)}.
$$
Now using the bilinear estimate coming from Lemma~\ref{lem:bilinear}, the semigroup decay~(\ref{eq:SGlambdaalpha}) and the fact that $\Pi_{\LL_\lambda,0} Q_{e_\lambda}(h_t, h_t) = 0$, we obtain
\begin{align*}
\Nt Q_{e_\lambda}(h_t, h_t) \Nt_{L^1_x L^1_v ( m)} 
\leq &\, \eta \, \|Q_{e_\lambda}(h_t, h_t) \|_{L^1_x L^1_v ( m)} \\ 
& + \int_0^{\infty} \|S_{\LL_\lambda} (\tau) Q_{e_\lambda}(h_t, h_t) \|_{L^1_x L^1_v ( m)} \, d\tau \\
\leq &\, \eta \, \|h_t\|_{L^1_x L^1_v ( m)} \|h_t\|_{L^1_x L^1_v (\langle v \rangle m)} \\
&+ C \left( \int_0^\infty e^{-\alpha_\lambda \tau} \, d\tau \right) \|h_t\|_{L^1_x L^1_v ( m)} \|h_t\|_{L^1_x L^1_v (\langle v \rangle m)} \\
\leq &\, C\,  \|h_t\|_{L^1_x L^1_v (m)} \|h_t\|_{L^1_x L^1_v (\langle v \rangle m)} \\
\leq &\, C  \,  \Nt h_t \Nt_{L^1_x L^1_v (m)} \Nt h_t \Nt_{L^1_x L^1_v (\langle v \rangle m)},
\end{align*}
which concludes the proof. 
 \end{proof}
 
 We shall now construct solutions by considering the following iterative scheme
 $$
 \partial_t h^{n+1} = \LL_\lambda h^{n+1} + Q_{e_\lambda} (h^n, h^n), \quad n \geq1,
 $$
 with the initialization 
 $$
 \partial_t h^0 = \LL_\lambda h^0, \quad h_{in}^0 = h_{in}
 $$
 and we assume $\Nt h_{in}\Nt_{\EE} \leq \eps/2$. The functions $h^n$, $n \geq 0$ are well-defined in $\EE$ thanks to Theorem~\ref{theo:linearmainresult}. 
 
The proof is split into three steps. \\
\smallskip \noindent 
{\it Step~1. Stability of the scheme.} Let us prove by induction the following control
\beqn \label{eq:control}
\forall \, n \geq 0, \quad \sup_{t \geq 0} \left( \Nt h_t^n \Nt_{\EE} + K \int_0^t \Nt h_\tau^n\Nt_{\EE^1} \, d\tau \right) \leq \eps
\eeqn 
as soon as $\eps \leq K/(2C)$. 

The initialization is deduced from Proposition~\ref{prop:dissipnorm} and the fact that $\|h_{in}\|_{\EE} \leq \eps/2$: 
$$
\sup_{t\geq 0}  \left( \Nt h_t^0\Nt_{\EE} + K \int_0^t \Nt h_\tau^0 \Nt_{\EE^1} \, d\tau \right) \leq \eps. 
$$
Let us now assume that~(\ref{eq:control}) is satisfied for any $0 \leq n \leq N \in \N^*$ and let us prove it for $n = N+1$. A similar computation as in Lemma~\ref{lem:apriori} yields
$$
\frac{d}{dt} \Nt h ^{N+1} \Nt_{\EE} + K \| h^{N+1} \|_{\EE^1}  \leq C \Nt Q_{e_\lambda}(h^N, h^N) \Nt_{\EE}
$$
for some constants $C, K >0$, which implies
\begin{align*}
\| h_t^{N+1} \|_{\EE} + K \int_0^t \|h_\tau^{N+1} \|_{\EE^1} \, d\tau 
&\leq \Nt h_{in} \Nt_{\EE} + \int_0^t \Nt Q_{e_\lambda} ( h_\tau^N, h_\tau^N) \Nt_{\EE} \, d\tau \\
&\leq  \Nt h_{in} \Nt_{\EE} + C \left( \sup_{\tau\geq0} \Nt h_\tau ^N \Nt_{\EE} \right) \int_0^t \Nt h_\tau^N \Nt_{\EE^1} \, d\tau \\
&\leq \frac{\eps}{2} + \frac{C}{K} \eps^2 \\
&\leq \eps,
\end{align*}
as soon as $\eps < K/(2C)$. 

\smallskip \noindent
{\it Step~2. Convergence of the scheme.} Let us now denote $d^n := h^{n+1} - h^n$ and $s^n := h^{n+1} + h^n$ for $ n\geq 0$. They satisfy
$$
\forall \, n \geq 0, \quad \partial_t d^{n+1} = \LL_\lambda d^{n+1} + Q_{e_\lambda}(d^n, s^n) + Q_{e_\lambda}( s^n, d^n)
$$
and
$$
\partial_t d^0 = \LL_\lambda d^0 + Q_{e_\lambda} ( h^0, h^0).
$$
Let us denote 
$$
A^n(t) := \sup_{ 0 \leq r \leq t} \left( \Nt d_r^n\Nt_{\EE} + K \int_0^r\|d_\tau^n\|_{\EE^1} \, d\tau \right).
$$
We can prove by induction that 
$$
\forall \, t \geq 0, \quad \forall \, n \geq 0, \quad A^n(t) \leq (\overline{C} \eps)^{n+2}
$$
for some constant $\overline{C} >0$. 

Hence for $\eps$ small enough, the series $\sum_{n\geq0} A^n(t)$ is summable for any $t \geq 0$ and the sequence $h^n$ has the Cauchy property in $L^\infty_t(\EE)$, which proves the convergence of the iterative scheme. The limit $h$ as $n$ goes to infinity satisfies the equation in the strong sense in $\EE$. 

\smallskip \noindent
{\it Step~3. Rate of decay.} We now consider the solution $h$ constructed so far. From the first step, we first deduce by letting $n$ go to infinity in the stability estimate that
$$
\sup_{t\geq0} \left( \Nt h_t \Nt_{\EE} + K \int_0^t \Nt h_\tau \Nt_{\EE^1} \, d\tau \right) \leq \eps. 
$$
Second, we can apply the a priori estimate from Lemma~\ref{lem:apriori} to this solution $h$ which implies that
$$
\Nt h_t \Nt_{\EE} \leq e^{-\frac{K}{2}t} \| h_{in} \|_{\EE}
$$
under the appropriate smallness condition on $\eps$. Using the fact that $\Nt  h_t \Nt_{\EE}$ converges to zero as $t \rightarrow +\infty$, we obtain
$$
\int_t^\infty \|h_t\|_{\EE^1} \, d\tau \leq \frac{2}{K\eta} \|h_t\|_{\EE} \leq C e^{-\frac{K}{2} t } \| h_{in} \|_{\EE}.
$$
We shall now perform a bootstrap argument in order to ensure that the solution $h_t$ enjoys the same decay rate $O(e^{-\alpha t})$, $\alpha \in (0,-\mu_\lambda)$ as the linearized semigroup (Corollary~\ref{cor:linearmainresult}). Assuming that the solution is known to decay as
$$
\|h_t\|_{\EE} \leq C e^{-\alpha_0 t}
$$
for some constant $C>0$, we can prove that it indeed decays 
$$
\|h_t\|_{\EE} \leq C' e^{-\alpha_1 t}
$$
with $\alpha_1 = \text{min}\left( \alpha_0 + K/4, \alpha\right)$. It can be proved using Theorem~\ref{theo:linearmainresult} and Lemma~\ref{lem:bilinear}. Hence, in a finite number of steps, it proves the desired decay rate $O(e^{-\alpha t})$. 

\smallskip
\subsubsection{Proof of Theorem~\ref{theo:weak2}}
We split the proof into three steps. We will only deal with the case $L^1_x L^1_v (m)$.

\smallskip \noindent
{\it Step~1. The spatially homogeneous evolution.} We consider the spatially homogeneous initial datum $g_{in}$. From~\cite[Corollary~6.3]{MM2}, we know that it gives rise to a spatially homogeneous solution $g_t \in L^1_v( m)$ which satisfies 
$$
\|g_t - G_\lambda\| _{L^1_v(m)} \rightarrow 0
$$ 
with explicit exponential rate and $g\in L^\infty_t(L^1_v( m)) \cap L^1_t(L^1_v(\langle v \rangle m)) $. Let us notice that this kind of result is not available for now in the case of a non-constant resitution coefficient.

\smallskip \noindent
{\it Step~2. Local in time stability estimate.} The goal is to construct a solution $f_t$ close to some spatially homogeneous solution $g_t$ which is uniformly bounded in $L^1_xL^1_v(\langle v \rangle m)$. We consider the difference $d_t := f_t -g_t$  and we write its evolution equation:
\begin{align*}
\partial_t d + v \cdot \nabla_x d &= Q_{e_\lambda}(d,d) + Q_{e_\lambda}^+(g,d) + Q_{e_\lambda}^+(d,g) - Q_{e_\lambda}^-(g,d) - Q_{e_\lambda}^-(d,g) + \lambda^\gamma \Delta_v d \\
&= \PP(d) + \lambda^\gamma \Delta_v d,
\end{align*}
where $\PP(d) := Q_{e_\lambda}(d,d) + Q_{e_\lambda}^+(g,d) + Q_{e_\lambda}^+(d,g) - Q_{e_\lambda}^-(g,d) - Q_{e_\lambda}^-(d,g)$.
We then estimate the time evolution of the $L^1_x L^1_v( m)$ norm:
\begin{align*}
&\frac{d}{dt} \|d_t\|_{L^1_x L^1_v(  m)} = \int_{\R^3 \times \T^3} (\PP(d_t) + \lambda^\gamma \Delta_v d_t) \, \text{sign} \, d_t \, dx \, m \, dv \\
\leq &\, C \, \| Q_{e_\lambda}(d_t, d_t) \|_{L^1_x L^1_v( m)} +  C\, \| Q_{e_\lambda}^+(g_t, d_t) \|_{L^1_x L^1_v( m)}  + C\, \| Q_{e_\lambda}^+(d_t, g_t) \|_{L^1_x L^1_v(m)}   \\
&+ C\, \| Q_{e_\lambda}^-(d_t, g_t) \|_{L^1_x L^1_v(m)}  - \int_{\R^3\times\T^3} Q_{e_\lambda}^-(g_t, d_t) \, \text{sign} \, d_t \, dx \, m \, dv \\
&+ \lambda^\gamma \int_{\R^3\times\T^3} \Delta_v |d_t| \, dx \,  m \, dv.
\end{align*}

First, using the bilinear estimates of Lemma~\ref{lem:bilinear}, we have
$$
\|Q_{e_\lambda}(d,d)\|_{L^1_x L^1_v( m)} \leq C \| d\|_{L^1_x L^1_v(m)} \| d\|_{L^1_x L^1_v(\langle v \rangle m)}
$$
and 
\begin{align*}
\|Q_{e_\lambda}^+(d,g)\|_{L^1_x L^1_v( m)} + \| Q_{e_\lambda}^+(g,d)\|_{L^1_x L^1_v(m)} \leq &\,\eta \, \|g\|_{L^1_x L^1_v(\langle v \rangle m)} \|d\|_{L^1_x L^1_v(\langle v \rangle m)} \\
&+ C_\eta \, \|g\|_{L^1_x L^1_v( m)} \|d\|_{L^1_x L^1_v( m)}
\end{align*}
for any $\eta >0$ as small as wanted, and some corresponding $\eta$-dependent constant $C_\eta$. Second, by trivial explicit computations we have
$$
\|Q_{e_\lambda}^-(d,g)\|_{L^1_x L^1_v( m)} \leq C \, \|d\|_{L^1_x L^1_v( m)} \|g\|_{L^1_x L^1_v(\langle v \rangle m)}.
$$
Third, we have for some $K>0$,
$$
- \int_{\R^3\times \T^3} Q_{e_\lambda}^-(g, d) \, \text{sign} \, d_t \, dx \,  m \, dv \leq - K \|d\|_{L^1_x L^1_v(\langle v \rangle m)}.
$$
Fourth and last, 
$$
\lambda^\gamma \int_{\R^3\times \T^3} \Delta_v |d| \, dx \, m \, dv \leq C \, \| d \|_{L^1_x L^1_v( m)} \leq C \, \|d\|_{L^1_x L^1_v(\langle v \rangle m)}.
$$

Gathering all these estimates, we finally obtain 
\begin{align*} 
\frac{d}{dt} \|d_t\|_{L^1_x L^1_v( m)} \leq &\, (C \, \|d_t\|_{L^1_x L^1_v( m)} + \lambda^\gamma - K) \|d_t\|_{L^1_x L^1_v(\langle v \rangle m)} \\
&+ C \, \| g_t\|_{L^1_x L^1_v(\langle v \rangle m)} \| d_t \|_{L^1_x L^1_v( m)}. \nonumber
\end{align*} 

We then introduce an iterative scheme
$$
\partial_t d^{n+1} = Q_{e_\lambda}(d^n,d^n) + Q_{e_\lambda}(g,d^n) + Q_{e_\lambda}(d^n,g), \quad n \geq 0,
$$
and 
$$
\partial_t d^0 = Q_{e_\lambda}(g, d^0) + Q_{e_\lambda}(d^0,g)
$$
with $d_{in}^n = d_{in} = f_{in} - g_{in}$ for all $n \geq0$, just as in the previous subsection. At each step, a global solution $d_n$ is constructed in $L^1_x L^1_v( m)$ using the estimates above. We assume that $\|d_{in}\|_{L^1_x L^1_v( m)} \leq \eps/2$. By passing to the limit in the a priori estimates, we deduce that, as long as
\beqn \label{eq:smallcondition}
C \, \| d_t \|_{L^1_x L^1_v(m)} \leq K - \lambda^\gamma
\eeqn
we have
$$
\|d_t\|_{L^1_x L^1_v( m)} \leq \frac{\eps}{2} \, \text{exp} \left( C \int_0^t \|g_\tau \|_{L^1_x L^1_v(\langle v \rangle m)} \, d\tau \right) .
$$ 
We then choose $\eps$ small enough so that $ C \eps \leq K - \lambda^\gamma$, and then since \linebreak$g_t \in L^1_t\left(L^1_x L^1_v(\langle v \rangle m)\right)$, we can choose $T_1 = T_1(\eps) >0$ so that the smallness condition~(\ref{eq:smallcondition}) is satisfied and
$$
\forall \, t \in [0, T_1], \quad \|d_t\|_{L^1_x L^1_v( m)} \leq \eps.
$$ 
Observe that $T_1(\eps) \xrightarrow[\eps \rightarrow 0]{} + \infty$. This completes the proof of stability. 

\smallskip \noindent
{\it Step~3. The trapping mechanism.} Consider $\delta$ the smallness constant of the stability neighborhood in Theorem~\ref{theo:existence} in $L^1_x L^1_v( m)$. Then from~\cite{MM2}, we deduce that there is some time $T_2=T_2(M)>0$ such that
$$
\forall \, t \geq T_2, \quad \|g_t - G_{\lambda,g}\|_{L^1_x L^1_v( m)} \leq \frac{\delta}{3}
$$
where $G_{\lambda,g}$ is the equilibrium associated to $g_{in}$. We then choose $\eps$ small enough such that
$$
\| f_{in} - g_{in} \|_{L^1_x L^1_v( m)} \leq \eps \Rightarrow \|G_{\lambda,f} - G_{\lambda, g} \|_{L^1_x L^1_v( m)} \leq \frac{\delta}{3}
$$
where $G_{\lambda,f}$ is the equilibrium associated to $f_{in}$, $T_1(\eps) \geq T_2(M)$ and 
$$
\|f_{T_2} - g_{T_2}\|_{L^1_x L^1_v( m)} \leq \frac{\delta}{3},
$$
from the stability result. 

We deduce that 
\begin{align*}
\|f_{T_2} - G_{\lambda, f} \|_{L^1_x L^1_v( m)} \leq  &\, \|f_{T_2} - g_{T_2}\|_{L^1_x L^1_v( m)} + \|g_{T_2} - G_{\lambda,g}\|_{L^1_x L^1_v m)} \\
&+  \|G_{\lambda,f} - G_{\lambda, g} \|_{L^1_x L^1_v( m)} \\
\leq&\,  \delta
\end{align*}
and we can therefore use the perturbative Theorem~\ref{theo:existence} for $t \geq T_2$ which concludes the proof.

\bigskip 
\bibliographystyle{acm}

\begin{thebibliography}{}

\end{thebibliography}


\begin{thebibliography}{10}

\bibitem{AR}
{\sc Alonso, R.~J.}
\newblock Existence of global solutions to the {C}auchy problem for the
  inelastic {B}oltzmann equation with near-vacuum data.
\newblock {\em Indiana Univ. Math. J. 58}, 3 (2009), 999--1022.

\bibitem{ACG}
{\sc Alonso, R.~J., Carneiro, E., and Gamba, I.~M.}
\newblock Convolution inequalities for the {B}oltzmann collision operator.
\newblock {\em Comm. Math. Phys. 298}, 2 (2010), 293--322.

\bibitem{AL}
{\sc Alonso, R.~J., and Lods, B.}
\newblock Free cooling and high-energy tails of granular gases with variable
  restitution coefficient.
\newblock {\em SIAM J. Math. Anal. 42}, 6 (2010), 2499--2538.

\bibitem{AL2}
{\sc Alonso, R.~J., and Lods, B.}
\newblock Two proofs of {H}aff's law for dissipative gases: the use of entropy
  and the weakly inelastic regime.
\newblock {\em J. Math. Anal. Appl. 397}, 1 (2013), 260--275.

\bibitem{AL3}
{\sc Alonso, R.~J., and Lods, B.}
\newblock Uniqueness and regularity of steady states of the {B}oltzmann
  equation for viscoelastic hard-spheres driven by a thermal bath.
\newblock {\em Commun. Math. Sci. 11}, 4 (2013), 851--906.

\bibitem{BGP}
{\sc Bobylev, A.~V., Gamba, I.~M., and Panferov, V.~A.}
\newblock Moment inequalities and high-energy tails for {B}oltzmann equations
  with inelastic interactions.
\newblock {\em J. Statist. Phys. 116}, 5-6 (2004), 1651--1682.

\bibitem{BP}
{\sc Brilliantov, N.~V., and P{\"o}schel, T.}
\newblock {\em Kinetic theory of granular gases}.
\newblock Oxford Graduate Texts. Oxford University Press, Oxford, 2004.

\bibitem{Carl}
{\sc Carleman, T.}
\newblock {\em Probl\`emes math\'ematiques dans la th\'eorie cin\'etique des
  gaz}.
\newblock Publ. Sci. Inst. Mittag-Leffler. 2. Almqvist \& Wiksells Boktryckeri
  Ab, Uppsala, 1957.

\bibitem{Cer}
{\sc Cercignani, C.}
\newblock Recent developments in the mechanics of granular materials.
\newblock {\em Fisica matematica e ingegneria delle strutture\/} (1995),
  119--132.

\bibitem{DPL}
{\sc DiPerna, R.~J., and Lions, P.-L.}
\newblock On the {C}auchy problem for {B}oltzmann equations: global existence
  and weak stability.
\newblock {\em Ann. of Math. (2) 130}, 2 (1989), 321--366.

\bibitem{GPV}
{\sc Gamba, I.~M., Panferov, V., and Villani, C.}
\newblock On the {B}oltzmann equation for diffusively excited granular media.
\newblock {\em Comm. Math. Phys. 246}, 3 (2004), 503--541.

\bibitem{Grad1}
{\sc Grad, H.}
\newblock Principles of the kinetic theory of gases.
\newblock In {\em Handbuch der {P}hysik (herausgegeben von {S}. {F}l\"ugge),
  {B}d. 12, {T}hermodynamik der {G}ase}. Springer-Verlag, Berlin, 1958,
  pp.~205--294.

\bibitem{Grad2}
{\sc Grad, H.}
\newblock Asymptotic theory of the {B}oltzmann equation. {II}.
\newblock In {\em Rarefied {G}as {D}ynamics ({P}roc. 3rd {I}nternat. {S}ympos.,
  {P}alais de l'{UNESCO}, {P}aris, 1962), {V}ol. {I}}. Academic Press, New
  York, 1963, pp.~26--59.

\bibitem{GMM}
{\sc Gualdani, M.~P., Mischler, S., and Mouhot, C.}
\newblock Factorization for non-symmetric operators and exponential
  {H}-theorem.
\newblock {\em http://hal.archives-ouvertes.fr/ccsd-00495786\/} (2013).

\bibitem{H1}
{\sc Hilbert, D.}
\newblock Begr\"undung der kinetischen {G}astheorie.
\newblock {\em Math. Ann. 72}, 4 (1912), 562--577.

\bibitem{H2}
{\sc Hilbert, D.}
\newblock {\em Grundz\"uge einer allgemeinen {T}heorie der linearen
  {I}ntegralgleichungen}.
\newblock Chelsea Publishing Company, New York, N.Y., 1953.

\bibitem{Kato}
{\sc Kato, T.}
\newblock {\em Perturbation theory for linear operators}.
\newblock Classics in Mathematics. Springer-Verlag, Berlin, 1995.
\newblock Reprint of the 1980 edition.

\bibitem{MM3}
{\sc Mischler, S., and Mouhot, C.}
\newblock Cooling process for inelastic {B}oltzmann equations for hard spheres.
  {II}. {S}elf-similar solutions and tail behavior.
\newblock {\em J. Stat. Phys. 124}, 2-4 (2006), 703--746.

\bibitem{MM}
{\sc Mischler, S., and Mouhot, C.}
\newblock Stability, convergence to self-similarity and elastic limit for the
  {B}oltzmann equation for inelastic hard spheres.
\newblock {\em Comm. Math. Phys. 288}, 2 (2009), 431--502.

\bibitem{MM2}
{\sc Mischler, S., and Mouhot, C.}
\newblock Stability, convergence to the steady state and elastic limit for the
  {B}oltzmann equation for diffusively excited granular media.
\newblock {\em Discrete Contin. Dyn. Syst. 24}, 1 (2009), 159--185.

\bibitem{MMRR}
{\sc Mischler, S., Mouhot, C., and Rodriguez~Ricard, M.}
\newblock Cooling process for inelastic {B}oltzmann equations for hard spheres.
  {I}. {T}he {C}auchy problem.
\newblock {\em J. Stat. Phys. 124}, 2-4 (2006), 655--702.

\bibitem{MS}
{\sc Mischler, S., and Scher, J.}
\newblock Spectral analysis of semigroups and growth-fragmentation equations.
\newblock {\em Preprint arXiv:1310.7773\/} (2013).

\bibitem{Mou}
{\sc Mouhot, C.}
\newblock Rate of convergence to equilibrium for the spatially homogeneous
  {B}oltzmann equation with hard potentials.
\newblock {\em Comm. Math. Phys. 261}, 3 (2006), 629--672.

\bibitem{SP}
{\sc Schwager, T., and P{\"o}schel, T.}
\newblock Coefficient of normal restitution of viscous particles and cooling
  rate of granular gases.
\newblock {\em Physical Review E 57}, 1 (1998), 650.

\bibitem{Uk}
{\sc Ukai, S.}
\newblock On the existence of global solutions of mixed problem for non-linear
  {B}oltzmann equation.
\newblock {\em Proc. Japan Acad. 50\/} (1974), 179--184.

\bibitem{Vill}
{\sc Villani, C.}
\newblock A review of mathematical topics in collisional kinetic theory.
\newblock In {\em Handbook of mathematical fluid dynamics, {V}ol. {I}}.
  North-Holland, Amsterdam, 2002, pp.~71--305.

\end{thebibliography}

\end{document}